\documentclass[12pt, reqno]{amsart}
\textheight=22truecm
\usepackage{amsmath}
\usepackage{amsfonts}
\usepackage{amssymb}
\usepackage[english]{babel}
\usepackage{color}

\usepackage{hyperref}
\usepackage{calrsfs}

\usepackage{multicol}
\usepackage{graphicx}

\usepackage[normalem]{ulem}

\linespread{1.2} \setlength{\topmargin}{-0.2in}
\setlength{\oddsidemargin}{0.45in}
\setlength{\evensidemargin}{0.45in} \setlength{\textwidth}{6in}
\setlength{\rightmargin}{0.0in} \setlength{\leftmargin}{0in}
\setlength{\textheight}{9in}

\usepackage{tikz}
\usetikzlibrary{arrows}

\vfuzz2pt
\hfuzz2pt

%
%%%%%%%%%%%%%%%%%%%%%%%%%    Teoremi
%

\theoremstyle{plain}
\newtheorem {theorem}{Theorem}[section]
\newtheorem {lemma}[theorem]{Lemma}
\newtheorem {corollary} [theorem]{Corollary}

\newtheorem {proposition} [theorem]{Proposition}
\theoremstyle{definition}
\newtheorem{definition}[theorem]{Definition}
\newtheorem{remark}[theorem]{Remark}

\newtheorem{ass}[theorem]{Conjecture}
\theoremstyle{remark}

\numberwithin{equation}{section}

\newcommand{\N}{\mathbb{N}}
\newcommand{\R}{\mathbb{R}}
\newcommand{\C}{\mathrm{C}}

\newcommand{\cH}{\mathcal{H}}
\newcommand{\cD}{\mathcal{D}}
\newcommand{\cC}{\mathcal{C}}
\newcommand{\n}{\phi^*}
\newcommand{\X} {\mathcal{X}\! f}
\newcommand{\cN}{\mathcal N}
\newcommand{\nB}{\psi}

\renewcommand{\epsilon}{\varepsilon}
\renewcommand{\H}{\mathbb{H}}
\renewcommand{\P} {\mathcal P\!\!}

\keywords{Isoperimetric problem, anisotropic perimeter, Heisenberg group, sub-Finsler geometry, Wulff shapes, Pansu's bubbles}
\subjclass[2010]{49Q10, %(Optimization of shapes other than minimal surfaces) 49Q20 (Variational problems in a geometric measure-theoretic setting), 
52B60, %(Isoperimetric problems for polytopes)
53C17%(Sub Riemannian geometry)
}

\begin{document}
\author[V.~Franceschi]{Valentina Franceschi$^1$}

\author[R.~Monti]{Roberto Monti$^1$}

\author[A.~Righini]{Alberto Righini$^1$}

\author[M.~Sigalotti]{Mario Sigalotti$^2$}

\address{$^{1}$Dipartimento di Matematica Tullio Levi Civita, Universit\`a di Padova, Italy}
\email{valentina.franceschi@unipd.it}
\email{monti@math.unipd.it}
\email{righini@math.unipd.it}

\address{$^{2}$Laboratoire Jacques-Louis
Lions, Sorbonne Universit\'e, Inria, CNRS, Université Paris Cit\'e, Paris, France}
\email{mario.sigalotti@inria.fr}

%\date{today}
\title{The isoperimetric problem for regular and crystalline norms in $\mathbb H^1$}

\begin{abstract} 
We study the isoperimetric problem for anisotropic perimeter measures on  $\R^3$, endowed with the  Heisenberg group structure. 
The perimeter is associated with a left-invariant norm $\phi$ on the horizontal distribution.
In the case where $\phi$ is the standard norm in the plane, such isoperimetric problem is the subject of Pansu's conjecture, which is still unsolved. 
Assuming some regularity on $\phi$ and on its dual norm $\phi^*$, we characterize $\mathrm{C}^2$-smooth isoperimetric sets as the sub-Finsler analogue of Pansu's bubbles. The argument is based on a fine study of the characteristic set of $\phi$-isoperimetric sets and on establishing a  foliation property by sub-Finsler geodesics. 
When $\phi$ is a crystalline norm, we show the existence of a partial foliation for constant $\phi$-curvature surfaces by sub-Finsler geodesics. By an approximation procedure, we finally prove a conditional minimality property for the candidate solutions in the general case (including the case where $\phi$ is crystalline).
\end{abstract}

\maketitle

%\tableofcontents

\section{Introduction}
Let $\phi:\R^n\to[0,\infty)$ be a norm in $\R^n$, $n\geq 2$. The associated \emph{Finsler} or \emph{anisotropic perimeter} of a Lebesgue measurable set $E\subset\R^n$ is defined as
\[
P_{\phi}(E)=\sup \left \{  \int_{E}
\mathrm{div}(V) \;
dp : V
\in \C^\infty_c(\R^n;\R^n)  \textrm{ with }
\max_{p\in \R^n} \phi(V (p))
  \leq 1 \right \}.
\]
If $E$ is  regular, 
$P_\phi(E)$ can be represented as a surface integral as follows
\[
P_\phi(E)= \int_{\partial E}\phi^*(\nu_E)\;d\mathcal H^{n-1},
\]
where $\nu_E$ is the inner unit normal to $\partial E$ and $\phi^*:\R^n\to[0,\infty)$ is the dual norm defined by
\[
\phi^* (w )  = \max _{\phi(v) =1} \langle w, v\rangle,\quad w\in\R^n.
\]
Here, $\langle\cdot,\cdot\rangle$ denotes the standard Euclidean scalar product in $\R^n$ and
$|\cdot|$
the Euclidean norm.  
In the theory of crystals, $\phi^*$ is the surface tension of the interface between an anisotropic material $E$ and a fluid, and $P_\phi(E)$ is the total free energy.

In the case where $\phi=|\cdot|$, $P_\phi$ is the standard De Giorgi perimeter and
 isoperimetric sets
 (i.e., sets of fixed volume that minimize perimeter) 
 are Euclidean balls.
For a general norm $\phi$, isoperimetric sets are translations and dilations of the \emph{Wulff shape}, first considered by G.~Wulff in \cite{Wulff}. In our   notation it corresponds to 
the unit ball of the $\phi$-norm.  
The first complete proof of the isoperimetric property of Wulff shapes in the class of Lebesgue measurable sets with given volume is contained in \cite{Fonseca91,FM91}, and based on the Brunn-Minkowski inequality. We refer to \cite{FMP} for a quantitative version.

 In this paper, we study the isoperimetric problem for sub-Finsler perimeter measures in the Heisenberg group $\mathbb H^1$.
 The latter  is $\R^3$ endowed with the non-commutative group law
\begin{equation}\label{eq:*Heis}
(\xi,z)*(\xi',z')=\left(\xi+\xi',z+z'+\omega(\xi,\xi')\right),\quad \xi,\xi'\in\R^2,\quad z,z'\in\R,
\end{equation}
where
$\omega:\R^2\times\R^2\to\R$ is the symplectic form
\begin{equation}\label{eq:omega}
\omega(\xi,\xi') = \frac{1}{2}(xy'-x'y),\quad \xi=(x,y),\ \xi'=(x',y')\in\R^2.
\end{equation} 
The vector fields 
\[
X=\frac{\partial}{\partial  x}-\frac{y}{2} \frac{\partial}{\partial  z}
\qquad
\text{and}
\qquad Y=\frac{\partial}{\partial  y}+\frac{x}{2}\frac{\partial}{\partial  z}
\]
are left-invariant for the group action and span a two-dimensional distribution $\cD(\H^1)$ in $T\H^1$, called the \emph{horizontal distribution}.   
We denote by $\cD(p)$ the fiber of $\cD$ at $p\in\H^1$.  

Given a norm $\phi:\R^2\to[0,\infty)$, the associated anisotropic perimeter measure in $\H^1$
is introduced in Definition \ref{d:per} and
 takes into account only horizontal directions. 
 For a regular 
set $E\subset\R^3$ it  can be 
represented as
\[
\P_\phi(E)= \int_{\partial E}\phi^*(N_E)\;d\mathcal H^{2},
\]
where $N_E$ is 
obtained by projecting the inner unit normal $\nu_E$ onto the 
horizontal distribution. 
A set $E\subset\H^1$ is said to be \emph{$\phi$-isoperimetric} if there exists  $m>0$ such that $E$ minimizes
\begin{equation}\label{eq:ip}
\inf\left\{\P_\phi(E) : E\subset \H^1\text{ measurable, }\mathcal L^3(E)=m\right\}.
\end{equation}

If $\phi=|\cdot|$ is the Euclidean norm in $\R^2$, then $\P_\phi$ corresponds to the standard horizontal perimeter in $\H^1$, introduced and studied in \cite{CDG94,GN96,FSSC96}. In this case, the problem of characterizing $\phi$-isoperimetric sets in the class of Lebesgue measurable sets in $\H^1$ is open. According to Pansu's conjecture \cite{P82}, $|\cdot|$-isoperimetric sets are obtained through
\emph{left-translations} 
and \emph{anisotropic dilations} $\delta_\lambda:\H^1\to\H^1$, $\lambda>0$,
\begin{equation}\label{eq:dilat}
\delta_\lambda(\xi,z)=(\lambda \xi,\lambda^2 z),
\end{equation}
of the so-called \emph{Pansu's bubble}, that we now present.

An absolutely continuous curve $\gamma:I\to\H^1$ is said to be \emph{horizontal} if $\dot\gamma(t)\in\cD(\gamma(t))$ for a.e.\ $t\in I$ and we call \emph{horizontal lift} of an absolutely continuous curve $\xi:I\to\R^2$ any horizontal curve $\gamma=(\xi,z)$ with
\begin{equation*}\label{eq:hlift}
\dot z=\omega(\xi,\dot\xi).
\end{equation*}
Pansu's bubble is the bounded set whose boundary is foliated by horizontal lifts of planar circles of a given radius, passing through the origin.
Such 
horizontal curves are length minimizing between their endpoints for the sub-Riemannian distance in $\H^1$, so that Pansu's conjecture in $\H^1$ explicits a relation between isoperimetric sets and the geometry of the ambient space.
The conjecture is supported by several results contained in \cite{RR08,M08,MR09,R12,FM16,FMM19}, but it is still unsolved  in its full generality. A quantitative version of the Heisenberg isoperimetric inequality has been proposed in \cite{FLM15}.

Very little is known on the isoperimetric problem when $\phi:\R^2\to[0,\infty)$ is a general norm in $\R^2$, apart from \cite{Sanchez-thesis}  for the statement of the problem and \cite{PozueloRitore} for a calibration result in suitable half-cylinders.
Existence of $\phi$-isoperimetric sets
follows by the  arguments of \cite{LR03},  see Section~\ref{s:ex}.
The construction  of the Pansu's bubble can be generalized  to the sub-Finsler context in the following way.
We call \emph{$\phi$-circle} of radius $r>0$ and center $\xi_0\in\R^2$ the set
\begin{equation}\label{eq:qCircle}
       C_\phi(\xi_0,r)  = \{\xi  \in \R^2: \phi(\xi-\xi_0) =r  \},
\end{equation}
and we call  \emph{$\phi$-bubble} the bounded set $E_\phi$ whose
boundary is foliated by horizontal lifts of $\phi$-circles in the plane of a given radius, passing through the origin.

In Figure~\ref{f:bubbles} we represent two $\phi$-bubbles, corresponding to $\phi=\ell^p$, with $\ell^p(x,y)=(|x|^p+|y|^p)^{\frac{1}{p}}$, 
 in the cases $p=3$ and $p=100$. The latter can be seen as an approximation of the  
crystalline case.

\begin{figure}
\includegraphics[width=.4\textwidth]{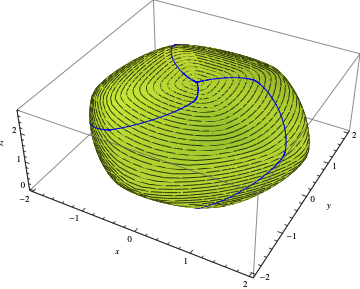}\hspace{1cm}
\includegraphics[width=.44\textwidth]{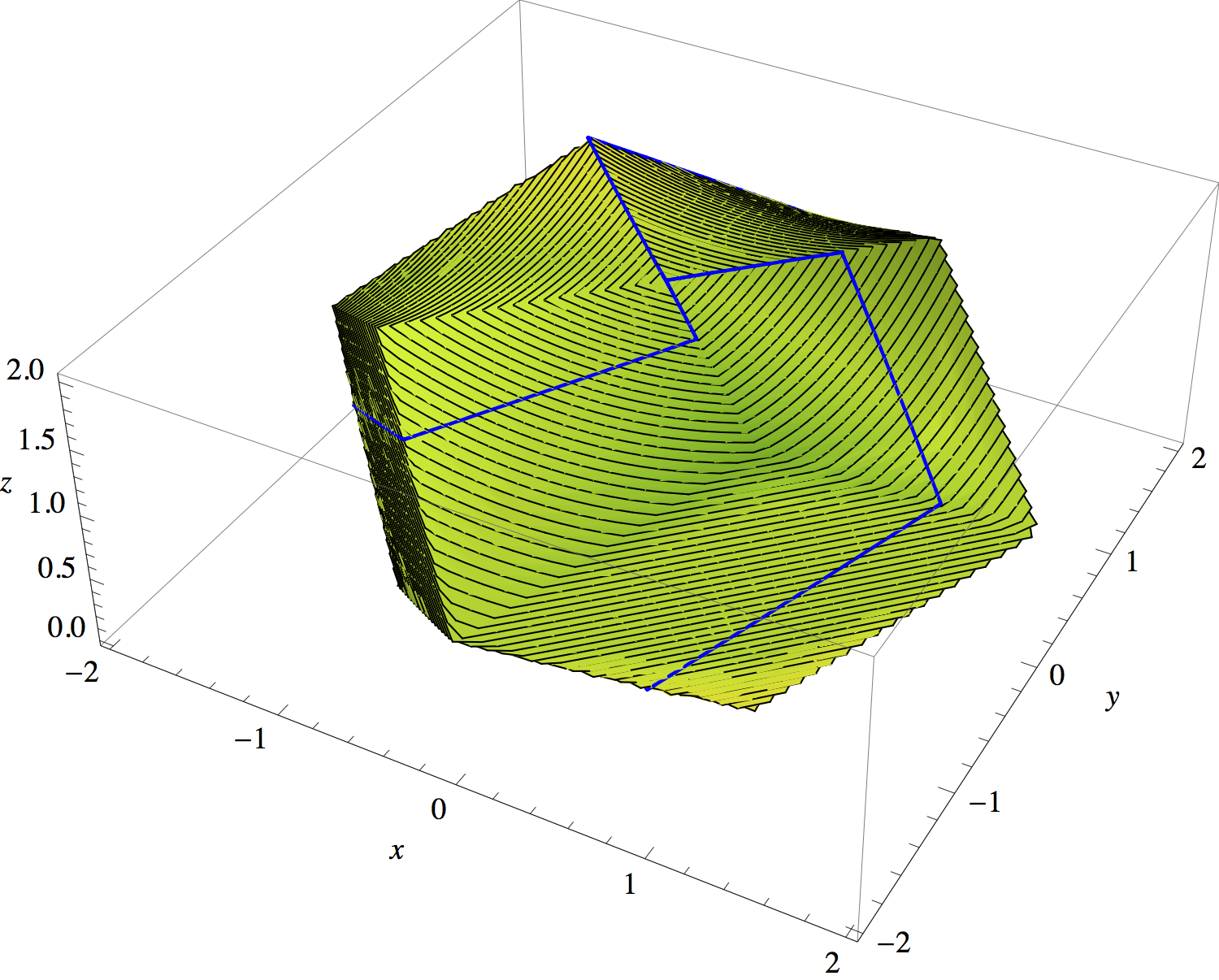}
\caption{The $\ell^{p}$-bubbles with $p=3$ (left) and $p=100$ (right).
In blue we outlined 
three horizontal lifts of $\ell^{p}$-circles foliating the $\ell^{p}$-bubble.
}
\label{f:bubbles}
\end{figure}

    Our main result is the characterization of $\C^2$-smooth $\phi$-isoperimetric sets when $\phi$ and $\phi^*$ are $\C^2$-smooth.
    This result suggests that the $\phi$-bubble is the  solution to the isoperimetric problem for $\P_\phi$.
    Here and in the following, if $\phi\in \C^k(\R^2\setminus\{0\})$ we say that $\phi$ is of class $\C^k$, for $k\in \N$.
    The $\C^2$ regularity of both $\phi$ and $\phi^*$ can be reformulated in terms of $\C^2$ regularity of $\phi$ and an additional positivity constraint on the curvature of $\phi$-circles, see Proposition~\ref{prop:c2+}.

\begin{theorem}\label{thmi:class}
Let $\phi$ be a norm of class $\C^2$ such that $\phi^*$ is  of class $\C^2$. If $E\subset \mathbb H^1$ is a $\phi$-isoperimetric set of class $\C^2$ then  we have $E=E_\phi$, up to
left-translations and anisotropic dilations.
\end{theorem}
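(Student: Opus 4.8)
\noindent The plan is to show that first-order optimality forces $\Sigma=\partial E$ to be a compact surface of constant $\phi$-curvature, and then to use the foliation and characteristic-set results quoted above to reconstruct $\Sigma$ leaf by leaf as the boundary of a $\phi$-bubble. First I would extract the consequences of minimality. Since $E$ minimizes $\mathcal{P}_\phi$ under the volume constraint and is of class $\C^2$, computing the first variation shows that $\Sigma$ has constant $\phi$-curvature $H$, the constant being the Lagrange multiplier of the volume constraint. The existence theory of Section~\ref{s:ex} gives that $E$ is bounded, so $\Sigma$ is a compact $\C^2$ surface, and a standard cut-and-paste argument lets me take $E$, and hence $\Sigma$, connected. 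The value $H$ cannot vanish, since no compact surface can be $\phi$-minimal; and because left-translations preserve the problem while the anisotropic dilations $\delta_\lambda$ of \eqref{eq:dilat} rescale $H$, I may normalise $H$ to the value realised on $\partial E_\phi$.

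Next I would bring in the two structural theorems stated above. Away from the characteristic set $\Sigma_0$ the two planes $T_p\Sigma$ and $\cD(p)$ are distinct and meet in a line; the resulting line field on $\Sigma\setminus\Sigma_0$ is exactly the field whose integral curves are the sub-Finsler geodesics, and by the foliation property these geodesics are horizontal lifts of $\phi$-circles of the radius fixed by $H$. By the characteristic-set theorem, $\Sigma_0$ is made of isolated points and of horizontal curves obeying the prescribed differential equation. The core of the proof is the global assembly of these leaves on the compact surface $\Sigma$. I would first argue, by a Poincar\'e--Hopf index count for the (oriented) characteristic line field, that $\Sigma_0\neq\emptyset$ and that the sum of the indices of its isolated singularities equals $\chi(\Sigma)$. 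Identifying $\Sigma$ as genus zero for the extremal set and showing that each isolated characteristic point contributes index $+1$, this yields exactly two such points, the poles; simultaneously, a closed characteristic curve is incompatible with closing the foliation into an embedded compact boundary, so for the minimizer the curve alternative is ruled out.

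Finally, following each geodesic leaf from one pole I would use compactness of $\Sigma$ together with the geodesic equation to show that it is a horizontal lift of a $\phi$-circle through the poles and that it terminates at the other pole, so that the one-parameter family of leaves sweeps out precisely a surface of the type described by \eqref{eq:qCircle} and the definition of $E_\phi$. Placing one pole at the origin by a left-translation and fixing the radius by a dilation $\delta_\lambda$, the two foliations coincide and hence $E=E_\phi$ up to left-translations and anisotropic dilations. The hard part will be this global step: controlling the holonomy in the $z$-direction of the horizontal lifts as the leaf parameter runs around $\Sigma$, excluding the characteristic-curve alternative, and proving that the leaves close up consistently onto exactly two poles rather than producing an unbounded or higher-genus surface of constant $\phi$-curvature.
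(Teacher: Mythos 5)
Your opening steps match the paper: first variation gives constant $\phi$-curvature $H$ (and in fact $H=-\tfrac34\P_\phi(E)/\mathcal L^3(E)<0$ directly from equation \eqref{EQU}, so no separate ``no compact $\phi$-minimal surface'' argument is needed), boundedness and connectedness come from the existence theory, and Corollary~\ref{l:conto} gives the foliation by horizontal lifts of $\phi$-circles off $\cC(E)$. The divergence, and the gap, is in the global assembly. Your Poincar\'e--Hopf count presupposes two things that are not available: that $\Sigma=\partial E$ has genus zero (this is essentially part of the conclusion, not a hypothesis — a priori an isoperimetric boundary could have any topology), and that each isolated characteristic point contributes index $+1$ (Lemma~\ref{l:char1} only gives $\operatorname{rank}(JF(\xi_0))=2$, hence index $\pm1$, i.e.\ $\operatorname{sign}\det JF(\xi_0)$, which you do not compute). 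Moreover, even for a torus ($\chi=0$) the index count is consistent with an empty characteristic set, so the topology must be ruled out by the leaf dynamics anyway. Likewise, your exclusion of characteristic curves (``incompatible with closing the foliation'') is an assertion, not an argument; the paper excludes them because, by Theorem~\ref{thmi:char}, a complete characteristic curve is either a horizontal line or the complete lift of a simple closed planar curve, and the latter gains height $2\cdot\mathrm{Area}$ per period, so both are unbounded — contradicting boundedness of $E$ (Corollary~\ref{l:char1old}). The same unboundedness argument, applied to a single leaf of the foliation, is what forces $\cC(E)\neq\emptyset$, with no index theory required.

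The step you flag as ``the hard part'' is exactly where the paper's essential new ingredient sits, and your proposal has no substitute for it: Lemma~\ref{l:ch2} shows that near an isolated characteristic point $p_0$ \emph{every} leaf of the foliation passes through $p_0$ (the proof uses $\operatorname{rank}(JF)=2$ together with the odd symmetry $\nabla\phi^*(-v)=-\nabla\phi^*(v)$ to show that a leaf avoiding $p_0$ would carry a discontinuous normal). This is what lets one identify, up to dilation, a maximal graph neighbourhood of a characteristic point with the lower hemisphere of $\partial E_\phi$ over $D_\phi(2)$, and then continue the leaves across the equator to close up the upper hemisphere, so that $\partial E_\phi$ is a connected component of $\partial E$ and connectedness of $E$ finishes the proof. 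Without a statement of this local structure at the poles and a continuation argument, your leaves are not known to converge to a common point, and the ``holonomy control'' you defer is left entirely open. So the skeleton is right, but the proposal is missing the two load-bearing lemmas (unboundedness of complete lifts, and the local leaf structure at isolated characteristic points) and replaces them with an index argument whose hypotheses are unestablished.
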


The proof of Theorem~\ref{thmi:class} is presented in Section~\ref{s:proof} and  is based on a fine
study of the \emph{characteristic set} of isoperimetric sets. 
The characteristic set
of a set  $E\subset\H^1$ of class $\C^1$ (equivalently, of its boundary $\partial E$) is
\begin{equation}\label{eq:char}
\cC(E){=\cC(\partial E)}=\{p \in \partial E : T_p \partial E = \cD(p)\}.
\end{equation}
In Section~\ref{s:char}  we characterize the structure of $\cC(E)$ for a $\C^2$-smooth $\phi$-isoperimetric set $E\subset\H^1$, proving that $\cC(E)$ is made of isolated points. For the more general case of $\phi$-critical surfaces we obtain the following result, that we prove  {by} adapting to the sub-Finsler case the theory of Jacobi fields of \cite{RR08}. Any $\phi$-critical surface has  constant $\phi$-curvature and the definition is presented in Section~\ref{s:char}. 
  
\begin{theorem}\label{thmi:char}
Let $\phi$ and $\phi^*$ be of class $\C^2$ 
and let $\Sigma\subset \H^1$ be a complete and   oriented  surface of class $\C^2$. If $\Sigma$ is $\phi$-critical with non-vanishing $\phi$-curvature then $\cC(\Sigma)$ consists of isolated points and $\C^2$ curves that are either horizontal lines or horizontal lifts of simple closed curves.
\end{theorem}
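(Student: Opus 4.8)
The plan is to work locally and reduce the problem to the study of the zero set of a $\C^1$ map, and then to use a sub-Finsler version of the Jacobi field machinery of \cite{RR08} to control the characteristic foliation near $\cC(\Sigma)$. First I would fix a point $p\in\cC(\Sigma)$ and, after a left-translation, represent $\Sigma$ near $p$ as a graph $z=w(x,y)$ with $w$ of class $\C^2$; this is legitimate because at a characteristic point the Euclidean normal is proportional to $(y/2,-x/2,1)$ and so has non-vanishing vertical component. A direct computation gives $Xu=-w_x-\tfrac{y}{2}$ and $Yu=-w_y+\tfrac{x}{2}$ for $u=z-w$, so in the $(x,y)$-domain the characteristic set is the zero set of the $\C^1$ map $G(x,y)=(w_x+\tfrac{y}{2},\,w_y-\tfrac{x}{2})$, whose Jacobian equals $\det(\mathrm{Hess}\,w)+\tfrac14$. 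The first, and easiest, structural observation is that any $\C^1$ curve $\Gamma\subset\cC(\Sigma)$ is automatically horizontal: at a characteristic point $T_p\Sigma=\cD(p)$, so the tangent vector to $\Gamma$ lies in $\cD(p)$. Hence every curve in $\cC(\Sigma)$ is the horizontal lift of a planar curve $\gamma=\pi(\Gamma)$, and it only remains to identify its projection.

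Away from $\cC(\Sigma)$ I would invoke the foliation result: since $\Sigma$ is $\phi$-critical with constant non-vanishing $\phi$-curvature $H_\phi$, its characteristic foliation is made of horizontal lifts of $\phi$-circles of fixed radius $r=1/|H_\phi|$. To understand how these lifts degenerate as they approach $\cC(\Sigma)$, I would parametrize a neighborhood by $(s,\varepsilon)$, where $s$ is the sub-Finsler arc length along a characteristic curve and $\varepsilon$ labels the leaf, and study the transversal Jacobi field $J=\partial_\varepsilon$ of this map. Adapting \cite{RR08} to the anisotropic setting, $J$ satisfies a second-order linear ODE whose coefficients are built from the second-order data of $\phi$ and $\phi^*$; here the hypotheses $\phi,\phi^*\in\C^2$ are exactly what is needed to write this equation, and the non-vanishing of $H_\phi$ keeps it non-degenerate. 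The zeros of $J$ are the focal points of the foliation, and they are precisely the points where the characteristic leaves collapse onto $\cC(\Sigma)$.

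Third, I would run the dichotomy. If $DG(p)$ is invertible then the inverse function theorem makes $p$ an isolated point of $\cC(\Sigma)$: this is the focal-point situation, in which a one-parameter family of $\phi$-circles passes through $p$. If instead the zero set of $G$ is not discrete near $p$, the Jacobi analysis shows it is a $\C^2$ curve $\Gamma$, which by the first step is horizontal. It then remains to classify $\gamma=\pi(\Gamma)$. The $\phi$-criticality of $\Sigma$, read along $\Gamma$, yields an autonomous planar ODE $\dot\gamma=V(\gamma)$ for the projection, so $\gamma$ is an orbit of a planar vector field: if the orbit is periodic it is automatically a simple closed curve by uniqueness of solutions, whence $\Gamma$ is the horizontal lift of a simple closed curve; if it is non-periodic, the completeness of $\Sigma$ together with the explicit form of $V$ forces it to be a straight line, whence $\Gamma$ is a horizontal line.

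The main obstacle is the second step: correctly setting up the Jacobi field equation in the sub-Finsler context, where the absence of a Riemannian metric means the transversal variation must be measured against the $\phi$-geometry rather than the Euclidean one, and verifying that $\phi,\phi^*\in\C^2$ provides enough regularity for the ODE to deliver the $\C^2$ solutions needed to promote the $\C^1$ curves coming from the general zero-set analysis to the $\C^2$ curves claimed in the statement. The non-vanishing of the $\phi$-curvature is used throughout to prevent the characteristic leaves from degenerating on a set larger than a curve, thereby ruling out a pathological accumulation of $\cC(\Sigma)$ and leaving only isolated points and the two admissible families of horizontal curves.
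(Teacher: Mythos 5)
Your outline does track the paper's strategy at a high level (local $z$-graph, zero set of the projected horizontal gradient $F$, Legendre foliation by lifts of $\phi$-circles, a Jacobi-field analysis in the spirit of \cite{RR08}), but two steps that carry the real weight of the proof are missing or misstated. The first is the structure of $\cC(\Sigma)$ near a point where $JF(\xi_0)$ has rank one. Your dichotomy --- ``$DG(p)$ invertible $\Rightarrow$ isolated; otherwise the Jacobi analysis shows the zero set is a $\C^2$ curve'' --- does not cover this case: the Jacobian of $F$ is never the zero matrix, but its determinant $\det(\mathcal{H}w)+\tfrac14$ can vanish, and the zero set of a $\C^1$ map whose differential has rank one at a zero need not be a curve (it could be a single point, or a badly disconnected subset of a curve). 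The Jacobi field of the Legendre foliation lives on $\Sigma\setminus\cC(\Sigma)$ and by itself gives no control on the fine structure of $\{F=0\}$. The paper's Lemma~\ref{l:char1} is exactly the missing argument: one observes that $\cC(f)\subset\Gamma_a\cap\Gamma_b$ for the level curves of $\langle F,a\rangle$ and $\langle F,b\rangle$ with $a,b\notin\ker JF(\xi_0)$, notes that $F$ is forced to be parallel to $a^\perp$ (resp.\ $b^\perp$) on $\Gamma_a$ (resp.\ $\Gamma_b$) off the characteristic set, and then applies the divergence theorem to $\cN=\nabla\phi^*(F)$ on a thin cone-shaped region trapped between $\Gamma_a$ and $\Gamma_b$; the constancy of $\mathrm{div}(\cN)$ and the $0$-homogeneity of $\nabla\phi^*$ force $\langle\cN_b-\cN_a,N\rangle=0$, contradicting the choice of $a$ and $b$ in \eqref{nonno}. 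Without this (or an equivalent) argument you cannot conclude that non-isolated characteristic points organize into $\C^1$ curves at all.

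The second gap is the classification of the curves. Your claim that $\phi$-criticality ``read along $\Gamma$'' yields an autonomous planar ODE $\dot\gamma=V(\gamma)$ for the projection is not substantiated and is not how the mechanism works. What criticality across the curve actually gives (Proposition~\ref{p:orto}) is the boundary condition that the one-sided limits $\cN^{\pm}$ of the $\phi$-normal are tangent to $\Xi$; the ODE one eventually derives, $\dot\tau=f(\tau)$ in the proof of Theorem~\ref{thm:ccurves}, governs the parameter $\tau$ defined by $\mu(\tau)=\dot\Xi$, i.e.\ the tangent direction, not the point $\Xi$ itself. Its derivation requires the conserved quantity $\langle V,Z\rangle_{\mathcal D}+h\langle\nabla\phi^\dagger(\xi_s),\xi_t\rangle$ along the Legendre leaves (Proposition~\ref{p:constant}), the existence of the characteristic time $s(t)<M/h$ (Lemma~\ref{l:exss}, which uses the central symmetry of $C_\phi$), and the constancy of $s(t)$ (Proposition~\ref{p:sconstant}). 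The line/closed-curve dichotomy then hinges on whether $h\bar s=M/2$ or not --- not on ``periodic versus non-periodic orbit,'' and simplicity of the closed curve is not ``automatic by uniqueness of solutions'' (since $\Xi$ is not an orbit of an autonomous planar field): it is proved by a monotonicity argument exploiting the half-period shift $\tau(t+T_0)=\tau(t)+M/2$. The completeness of $\Sigma$ plays no role in forcing the line; it enters only in excluding truncated curves. As written, the last step of your plan would have to be rebuilt from scratch along these lines.
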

The simple closed curve{s} of Theorem~\ref{thmi:char} are 
described by a  {suitable ordinary }
differential equation. We expect 
that  these curves are $\phi^\dagger$-circles, where $\phi^\dagger$ is the  norm
defined as
\begin{equation*}
\phi ^\dag (\xi)  = \phi^*(\xi^\perp),\quad \xi\in\R^2.
\end{equation*}
Here and hereafter, $\perp:\R^2\to\R^2$ denotes the perp-operator $\perp\!\!(\xi) =\xi^\perp$, with
\[
\xi^\perp = (x,y)^\perp = (-y,x),\quad \xi=(x,y)\in\R^2.
\]

Theorem~\ref{thmi:class} then follows by combining the results of  {Sections~\ref{ss:fv-reg}, \ref{s:integ}, and \ref{s:char}}.
In particular, starting from  a first variation analysis, we
establish a foliation property outside the characteristic set for $\C^2$-smooth $\phi$-isoperimetric sets (and more generally for constant $\phi$-curvature surfaces). {Theorem \ref{thmi:char} is a key step for concluding the proof.}

We also
identify an
explicit 
 relation between $\phi$-isoperimetric sets and geodesics in the ambient space. In Section~\ref{s:PMP}, we show that, outside the characteristic set, $\phi$-isoperimetric sets are foliated
by \emph{sub-Finsler geodesics} in $\H^1$ relative to the norm
$\phi^\dag$.
We refer to Corollary~\ref{c:isop-lm} for a statement of the result.
Notice that when $\phi=|\cdot|$ is the Euclidean norm,
$\phi^\dagger$ reduces to $|\cdot|$, and we recover the 
foliation by sub-Riemannian geodesics of $\C^2$-smooth
$|\cdot|$-isoperimetric sets.

In the case where $\phi$ or $\phi^*$ are 
{not differentiable},
Theorems~\ref{thmi:class} 
cannot be   applied
in a direct way. In Corollary~\ref{rem:pieceC2} we show that the foliation property by horizontal lifts of $\phi$-circles outside the characteristic set can be recovered when $\phi^*$ is only \emph{piecewise $\C^2$}, thus allowing to cover the case
$\phi=\ell^p$ 
 for $p>2$.
For general non-differentiable norms, our next result
is conditioned to the validity of the following conjecture.

\begin{ass} \label{iBenbow}
For any norm $\phi$ of class $\C^\infty$ such that $\phi^*$ is of class $\C^\infty$, $\phi$-isoperimetric sets are of class $\C^2$.
\end{ass}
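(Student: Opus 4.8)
The plan is to combine the regularity theory for almost-minimizers of the sub-Finsler perimeter with the structural information already established in Theorems~\ref{thmi:char} and \ref{iFlynt}. First I would remove the volume constraint by a standard penalization: adding a term $\Lambda|\mathcal L^3(E)-m|$ to $\P_\phi$ and choosing $\Lambda$ large turns a $\phi$-isoperimetric set $E$ into a $(\Lambda,r_0)$-almost-minimizer of $\P_\phi$ under local perturbations. Since $\phi\in\C^\infty_+$, the dual norm $\phi^*$ is smooth and uniformly convex, so the parametric integrand $N\mapsto\phi^*(N)$ is \emph{uniformly elliptic}; this is precisely the structural hypothesis under which an anisotropic regularity theory can be run.

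Then I would treat the two parts of $\partial E$ separately. Away from the characteristic set $\cC(E)$ the inner normal has a non-degenerate horizontal projection, so $\partial E$ is locally an intrinsic graph. Almost-minimality together with uniform ellipticity should yield, via a De~Giorgi--Almgren type excess-decay argument, that $\partial E\setminus\cC(E)$ is $\C^{1,\alpha}$; the constant-$\phi$-curvature equation produced by the first variation in Section~\ref{ss:fv-reg} is then a uniformly elliptic quasilinear PDE whose coefficients are as smooth as $\phi^*$, and Schauder bootstrapping upgrades this to $\C^\infty$ on $\partial E\setminus\cC(E)$. At this stage the foliation by $\phi^\dagger$-geodesics of Corollary~\ref{c:isop-lm} is available and, by Theorem~\ref{thmi:char}, $\cC(E)$ is forced to reduce to isolated points and $\C^2$ curves.

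The decisive and hardest step is the analysis at the characteristic set, where ellipticity degenerates and the graph representation collapses. Here I would perform a blow-up at each characteristic point (or transversally along each characteristic curve) using the anisotropic dilations $\delta_\lambda$, show that the rescaled perimeters converge, and classify the admissible blow-up limits. The aim is to prove that every tangent object is the boundary of a $\phi$-bubble cap, i.e.\ exactly the model surface of Theorem~\ref{iFlynt}, and that the convergence is strong enough to transfer $\C^2$ regularity. Gluing this with the $\C^\infty$ regularity off $\cC(E)$ and the smallness of $\cC(E)$ would produce a global $\C^2$ (indeed, as smooth as $\phi$ permits) boundary, proving the conjecture.

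I expect the blow-up classification at characteristic points to be the main obstacle, and this is exactly what keeps the statement at the level of a conjecture. Unlike the Euclidean anisotropic case, the vertical degeneration of the Heisenberg perimeter where $\partial E$ is tangent to $\cD$ destroys the monotonicity and compactness one would want for tangent-cone uniqueness, and the uniform convexity of $\phi^*$ alone does not control the minimizer there. A way to bypass pointwise regularity altogether would be to prove \emph{directly}, by a sub-Finsler symmetrization or by a calibration built from the $\phi^\dagger$-geodesic foliation, that any $\phi$-isoperimetric set coincides with $E_\phi$ up to left-translation and anisotropic dilation; its $\C^2$ regularity would then follow immediately from Theorem~\ref{iFlynt}. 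Constructing such a calibration without assuming $\C^2$ regularity a priori is the key missing ingredient.
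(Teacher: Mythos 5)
The statement you are trying to prove is labelled as a \emph{Conjecture} in the paper: the authors offer no proof of it, and Theorem~\ref{thmi:appr} is explicitly stated as conditional on its validity. Your text is therefore not being measured against an existing argument but against the absence of one, and on its own terms it is a research outline rather than a proof --- as you yourself concede in the final paragraph, where you identify the blow-up classification at characteristic points as ``the key missing ingredient.'' A proposal that ends by naming the ingredient it lacks cannot be accepted as a proof of the statement.

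Beyond the admitted gap, several of the steps you present as routine are not. The penalization reducing the constrained problem to $(\Lambda,r_0)$-almost-minimality is indeed standard, but the subsequent ``De~Giorgi--Almgren type excess-decay argument'' yielding $\C^{1,\alpha}$ regularity of $\partial E\setminus\cC(E)$ is not available in this setting: even for $\phi=|\cdot|$, the $\C^{1,\alpha}$ regularity of $H$-perimeter (almost-)minimizers in $\H^1$ is a well-known open problem, because the relevant graphs are \emph{intrinsic} graphs along the degenerate direction and the usual Lipschitz approximation, monotonicity, and excess-decay machinery of the Euclidean theory does not carry over. Uniform convexity of $\phi^*$ on $\R^2\setminus\{0\}$ does not restore ellipticity: the integrand $F\mapsto\phi^*(F)$ acts on the \emph{projected horizontal gradient} $F=\nabla f-\tfrac12\xi^\perp$, its Hessian is degenerate in the radial direction by $0$-homogeneity of $\nabla\phi^*$, and the equation \eqref{EQU} degenerates completely where $F=0$. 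Consequently the Schauder bootstrap you invoke already presupposes $\C^1$ regularity and non-characteristicity, which is what one is trying to prove; and the appeal to Theorem~\ref{thmi:char} and Corollary~\ref{c:isop-lm} is circular, since both require $\partial E$ to be of class $\C^2$ as a hypothesis. In short, the proposal correctly maps out why the conjecture is plausible and where the difficulty sits, but it does not prove it; the paper leaves it open for exactly the reasons your own last paragraph describes.
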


It would actually be natural to extend the conjecture to any $\phi$ of class $\C^2$ with dual norm of class $\C^2$, but we choose the $\C^\infty$ hypothesis in order  to have weaker assumptions in the next result, whose proof is presented in Section~\ref{s:cr}.

\begin{theorem} \label{thmi:appr}
{
Assume that Conjecture \ref{iBenbow} holds true.
Then   for any norm $\phi$ in $\R^2$
the $\phi$-bubble $E_\phi\subset \mathbb H^1$  is $\phi$-isoperimetric.
}
\end{theorem}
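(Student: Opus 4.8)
The plan is to reduce the general case to the smooth, strictly convex one already settled in Theorem~\ref{thmi:class}, by approximating $\phi$ with well-behaved norms and controlling both the perimeter functional and the bubble construction along the approximation. Fix the volume $m>0$. By mollifying $\phi$ and adding a small multiple of a smooth uniformly convex norm, I construct a sequence $\phi_k$ of norms of class $\C^\infty_+$ with $\phi_k\to\phi$ uniformly on the Euclidean unit circle, hence in the sense of norms. Convex duality then gives $\phi_k^*\in\C^\infty_+\subset\C^2$ together with $\phi_k^*\to\phi^*$ uniformly. For each $k$, existence of a $\phi_k$-isoperimetric set of volume $m$ follows from the arguments of \cite{LR03}; by Conjecture~\ref{iBenbow} any such set is of class $\C^2$, so Theorem~\ref{thmi:class} identifies it, up to a left-translation, with the $\phi_k$-bubble $E_{\phi_k}$ of volume $m$.

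The second ingredient is a two-sided comparison of the perimeters. From $(1-\epsilon_k)\phi\le\phi_k\le(1+\epsilon_k)\phi$ with $\epsilon_k\to0$, inserting admissible horizontal vector fields into the variational definition of $\P_\phi$ in Definition~\ref{d:per} and using its homogeneity and monotonicity in the norm, I would obtain a constant $c_k\to1$ with
\[
c_k^{-1}\,\P_\phi(F)\le \P_{\phi_k}(F)\le c_k\,\P_\phi(F)
\]
for every measurable $F\subset\H^1$. Combining this with the minimality of $E_{\phi_k}$ for $\P_{\phi_k}$ among sets of volume $m$ yields, for any competitor $F$ with $\mathcal L^3(F)=m$,
\[
\P_\phi(E_{\phi_k})\le c_k\,\P_{\phi_k}(E_{\phi_k})\le c_k\,\P_{\phi_k}(F)\le c_k^2\,\P_\phi(F).
\]

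It then remains to pass to the limit. Normalizing to volume $m$ fixes, through the scaling $\mathcal L^3(\delta_\lambda E)=\lambda^4\mathcal L^3(E)$ coming from \eqref{eq:dilat}, the common radius $r_k$ of the generating $\phi_k$-circles as a function of $\phi_k$; hence $r_k\to r_\infty$, and since the $\phi_k$-circles converge to the $\phi$-circle in Hausdorff distance while their horizontal lifts depend continuously on the circle through $\dot z=\omega(\xi,\dot\xi)$, I expect $E_{\phi_k}\to E_\phi$ in $L^1(\H^1)$ with $\mathcal L^3(E_\phi)=m$. The $L^1$-lower semicontinuity of $\P_\phi$, immediate from its definition as a supremum of $L^1$-continuous functionals, together with the chain above then gives
\[
\P_\phi(E_\phi)\le\liminf_{k}\P_\phi(E_{\phi_k})\le\liminf_{k}c_k^2\,\P_\phi(F)=\P_\phi(F)
\]
for every $F$ of volume $m$, which is exactly the asserted $\phi$-isoperimetric property of $E_\phi$.

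The main obstacle is the convergence $E_{\phi_k}\to E_\phi$ in $L^1$. Away from the characteristic set the convergence of the generating circles and of their lifts is routine, but care is needed at the two characteristic points, where the foliation degenerates and, in the crystalline limit, the bubble may develop edges or flat faces: one must verify that these lower-dimensional sets do not obstruct convergence of the enclosed volumes, and that the radius normalization depends continuously on a possibly non-differentiable $\phi$. A convenient way to organize this step is to observe that the bubbles $E_{\phi_k}$ are uniformly bounded and have uniformly bounded $\phi$-perimeter, extract an $L^1$-limit by the compactness of sets of finite perimeter, and then identify that limit with $E_\phi$ using the explicit description of the foliation.
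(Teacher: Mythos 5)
Your proposal is correct and follows essentially the same route as the paper: smooth approximation by $\C^\infty_+$ norms, identification of the smooth isoperimetric sets with the bubbles $E_{\phi_k}$ via Conjecture~\ref{iBenbow} and Theorem~\ref{thmi:class}, the two-sided perimeter comparison coming from $(1-\eta)\phi\le\phi_k\le(1+\eta)\phi$, Hausdorff/$L^1$ convergence of the bubbles, and lower semicontinuity of $\P_\phi$. The only (cosmetic) difference is that the paper works with the dilation-invariant isoperimetric quotient rather than fixing the volume $m$, which spares it the normalization of $\mathcal L^3(E_{\phi_k})$ that you rightly flag as the delicate point.
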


{
Of a particular interest is the case of a \emph{crystalline norm}.
A norm
 $\phi:\R^2\to[0,\infty)$ is called crystalline if  the $\phi$-circle
  $C_\phi=C_\phi(0,1)$ is a   convex polygon centrally symmetric with respect to the origin.
 Let $v_1,\ldots,v_{2N}\in C_\phi$ be the ordered vertices of
this polygon, and denote by $e_i = v_i-v_{i-1}$, $i=1,\ldots,2N$,
the edges of $C_\phi$, where $v_0 =v_{2N}$.
We consider the left-invariant vector fields
\begin{equation}\label{eq:X_i}
X_i:=e_{i,1}X+e_{i,2}Y,\quad i=1,\ldots, 2N,
\end{equation}
 where $e_i =(e_{i,1}, e_{i,2})$, and we notice that $X_{i+N}=-X_i$ for $i=1,\dots,N$.
By a first variation argument, we deduce a foliation
   property for $\phi$-isoperimetric sets
   by integral curves of the $X_i$, see Section~\ref{ss:fv-cr}.

\begin{theorem}\label{thmi:folz}
Let $E\subset\H^1$ be  $\phi$-isoperimetric for
a crystalline norm $\phi$. Let  $A\subset\H^1$ be
an open  set
such that $\partial E\cap A$ is a connected $z$-graph of  class  $\C^2$.
Then there exists $i=1,\dots,N$ such that $\partial E\cap A$ is foliated by integral curves of
$X_i$.
\end{theorem}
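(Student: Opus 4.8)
The plan is to carry out a first variation analysis adapted to the crystalline setting, exploiting that the surface tension $\phi^*$ is piecewise linear. Since $\partial E \cap A$ is a $z$-graph of class $\C^2$ over an open region $\Omega\subset\R^2$, I would write it as $z=u(\xi)$ and compute the horizontal normal $N_E$ explicitly in terms of $\nabla u$ and the horizontal vector fields $X,Y$. The key point is that for a crystalline norm the dual $\phi^*$ is the support function of the convex polygon $C_\phi$, hence $\phi^*(w)=\max_i\langle w, v_i\rangle$: on $\partial E\cap A$ the (projected) normal $N_E$ lands in the interior of one of the dual cones associated to a vertex $v_i$ of $C_\phi$ (or, on a lower-dimensional set, on the boundary between two cones). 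Where $N_E$ is interior to a single dual cone, $\phi^*$ is \emph{linear}, namely $\phi^*(N_E)=\langle N_E, v_i\rangle$ for a fixed $i$, and this linearity is what forces the rigidity.

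First I would establish that, on the connected piece $\partial E\cap A$, the active vertex $v_i$ is constant. By connectedness and the fact that the region where two cones meet is thin (it corresponds to $N_E$ being parallel to an edge $e_i$), the set where a given vertex is active is open; a continuity/connectedness argument on the $\C^2$-smooth graph shows a single vertex governs the whole connected piece, giving a fixed index $i$. Then I would perform a normal variation $E_t$ supported in $A$, say flowing along a compactly supported horizontal field, and compute $\frac{d}{dt}\P_\phi(E_t)\big|_{t=0}$ together with $\frac{d}{dt}\mathcal L^3(E_t)\big|_{t=0}$. Because $E$ is $\phi$-isoperimetric, the Lagrange condition $\frac{d}{dt}\P_\phi - \lambda\frac{d}{dt}\mathcal L^3 = 0$ must hold for all admissible variations, yielding a pointwise Euler--Lagrange equation on $\partial E\cap A$. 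On the piece where $\phi^*$ reduces to the linear functional $\langle\,\cdot\,,v_i\rangle$, this equation will take the form of a constant-$\phi$-curvature condition in which the relevant derivative of $\phi^*$ is simply the constant vector $v_i$.

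The heart of the matter is to read off from the resulting Euler--Lagrange equation that the horizontal integral curves of $X_i$ lie inside $\partial E\cap A$. Here I would use the representation of the $\phi$-perimeter as a surface integral together with the structure equations of $\H^1$: the condition that $N_E$ stays in the fixed dual cone of $v_i$ means, after identifying the horizontal normal with a rotation of the horizontal tangent, that the horizontal tangent direction of $\partial E$ is forced to be $\pm e_i$, i.e.\ the direction of $X_i$. Concretely, $N_E$ being (Euclidean-)proportional to $v_i^\perp$ up to the $z$-variable is equivalent to $X_i$ being tangent to $\partial E$ at each noncharacteristic point, and the $\C^2$ graph condition lets me integrate this tangency into a genuine foliation by integral curves of $X_i$. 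This uses the same mechanism as in the smooth foliation results of Sections~\ref{ss:fv-reg} and \ref{s:PMP}, specialized to the polygonal dual.

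The main obstacle I expect is controlling the behavior where $N_E$ crosses between two adjacent dual cones, i.e.\ where the smooth graph has horizontal normal parallel to an edge $e_i$ of $C_\phi$; there $\phi^*$ is only piecewise linear and not differentiable, so the naive first variation is not directly available and one must justify that such transition sets do not disconnect the foliation nor destroy the constancy of $i$. I would handle this by a careful one-sided variation argument (testing the variational inequalities separately from each side of the edge) combined with the $\C^2$ regularity of the graph, which constrains how $N_E$ can move; the constancy-of-vertex step above is precisely what rules out genuine crossings on the connected piece. Once constancy of $i$ is secured, the remaining computations are the standard first-variation manipulations and the integration of the tangency condition, which are routine.
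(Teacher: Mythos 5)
There is a genuine gap here, and it is the central mechanism of the proof: you have the variational dichotomy backwards. You assert that on the connected piece the projected normal $N_E$ ``lands in the interior of one of the dual cones'' where $\phi^*$ is linear, and that this linearity ``forces the rigidity.'' In fact the opposite happens. If $F=(Xg,Yg)$ stays (a.e.) in an open region where $\nabla\phi^*$ is constant, then for every compactly supported graph perturbation the first variation of the perimeter is $\int_D\langle \nabla\phi^*(F),\nabla\varphi\rangle\,d\xi=0$, while the volume variation $\int_D\varphi\,d\xi$ can be made nonzero; equivalently, the Euler--Lagrange equation $\mathrm{div}(\nabla\phi^*(F))=-\tfrac34\P_\phi(E)/\mathcal L^3(E)$ cannot hold because the divergence of a constant vector field vanishes and the right-hand side is strictly negative. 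This is exactly Lemma~\ref{lem:foliazione1}: such a set cannot be isoperimetric. The correct conclusion is therefore that $F$ must take values in the \emph{singular} set of $\phi^*$, i.e.\ the rays $L_i=\R v_i^*$ through the vertices of the dual polygon, and it is the relation $\langle v_i^*,e_i\rangle=0$ in \eqref{ORTO} that then gives $F^\perp\in\R e_i$, hence tangency of $X_i$. Your claimed implication also fails geometrically: membership of $F$ in a two-dimensional open cone does not constrain the tangent direction $F^\perp$ to be $\pm e_i$; only $F\in\R v_i^*$ does.

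A secondary but still substantive gap is the ``constancy of the active index'' step. Once one knows $D=\cup_{i=1}^N D_i$ with $D_i=\{\xi: F(\xi)\in\R v_i^*\}$, these sets need not be open and a soft ``the active region is open, use connectedness'' argument does not go through. The paper first proves the covering $\overline D=\cup_i\overline{\operatorname{int}D_i}$ by an iterated limiting argument, and then shows $\overline{\operatorname{int}D_i}\cap\overline{\operatorname{int}D_j}=\emptyset$ for $i\ne j$ via a Lie-bracket obstruction: at a common accumulation point both $X_i$ and $X_j$ would be tangent to the surface, and a continuity argument on $[X',X_j]=c_{ij}Z$ forces $Z$ to be tangent as well, giving three independent tangent vectors to a surface. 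Your proposal does not identify this obstruction, and without it the single index $i$ is not secured.
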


Geodesics of 
sub-Finsler structures on the Heisenberg group and other Carnot groups 
have been studied in several papers (see, in particular, 
\cite{ALDS,BBLS,Ber94,L19,S20}).  
Unfortunately,
Theorem \ref{thmi:folz}
does not provide enough  information in order to establish the global
foliation property by $\phi^\dagger$-geodesics in the   crystalline case.

\subsection{Structure of the paper} In Section~\ref{s:per} we introduce the notion of sub-Finsler perimeter and we deduce a representation formula for Lipschitz sets (see Proposition~\ref{p:repr}), holding for any norm $\phi$ in $\R^2$. In Section~\ref{s:ex} we prove existence of $\phi$-isoperimetric sets for a general norm $\phi$, following the arguments in \cite{LR03}. In Section~\ref{s:fv} we derive first-variation necessary conditions for $\phi$-isoperimetric sets, both when $\phi^*$ is of class $\C^1$ (see Section~\ref{ss:fv-reg}) and when $\phi^*$ is not differentiable (see Section~\ref{ss:fv-cr}). In the former case, we introduce the notion of $\phi$-curvature of a $\C^2$-smooth surfaces (when $\phi^*$ is $\C^2$) and of $\phi$-critical surface. In the latter case, we deduce the (partial) foliation property stated in Theorem~\ref{thmi:folz} for crystalline norms.
In Section~\ref{s:integ} we deduce a foliation property outside the characteristic set for $\phi$-isoperimetric sets of class $\C^2$, assuming $\phi$ and $\phi^*$ to be regular enough. We then study such a foliation from the point of view of geodesics in the ambient space in Section~\ref{s:PMP}, and in Section~\ref{s:char} we study the characteristic set of $\C^2$-smooth {$\phi$-critical surfaces and of} $\phi$-isoperimetric sets, assuming $\phi$ and $\phi^*$ to be $\C^2$ (Theorem~\ref{thmi:char}). In Section~\ref{s:proof} we then prove Theorem~\ref{thmi:class} and we discuss the regularity of the candidate isoperimetric sets $E_\phi$. Finally, Section~\ref{s:cr} is dedicated to general norms and contains the proof of Theorem~\ref{thmi:appr}.

\subsection*{Acknowledgments} The authors thank M.~Ritor\'e and C.~Rosales for pointing out a gap in a preliminary version of the paper. The first and third authors acknowledge the support of ANR-15-CE40-0018 project \textit{SRGI - Sub-Riemannian Geometry and Interactions}.  The first author acknowledges the support received from the European Union's Horizon 2020 research and innovation programme under the \emph{Marie Sklodowska-Curie grant agreement No.~794592}, of the INdAM--GNAMPA project \emph{Problemi isoperimetrici con anisotropie}, and of a public grant of the French National Research Agency (ANR) as part of the Investissement d'avenir program, through the iCODE project funded by the IDEX Paris-Saclay, ANR-11-IDEX-0003-02.

\section{Sub-Finsler perimeter}\label{s:per}

In this section, we introduce the notion of $\phi$-perimeter in $\H^1$ for a norm $\phi$ in $\R^2$. We start by fixing the notation relative to horizontal vector fields and sub-Finsler norms in $\H^1$.

 A smooth horizontal vector field is a vector field $V$ on $\R^3$ that  
can be written as $V=aX+bY$ where $a,b\in \C^\infty(\H^1)$.
When $A\subset \H^1$ is an open set and  $ a,b \in \C^\infty_c(A)$ have compact support in $A$ we shall write $V\in \cD_c(A)$. 
We fix on $\cD(\H^1)$ the scalar product $\langle \cdot ,\cdot\rangle_\cD$ that makes $X,Y$ pointwise orthonormal. Then each fiber $\cD(p)$ can be identified with  the Euclidean plane  $\R^2$.

Let $\phi:\R^2\to[0,\infty)$ be a norm. We fix on $\cD(\H^1)$ the left-invariant norm associated with $\phi$. Namely, with a slight abuse of notation,
 for any $p\in \H^1$ and 
   with $a,b\in \R$
  we define 
\[
\phi(aX(p)+bY(p))  = \phi\big( (a ,b  )\big) . 
\]

Since the Haar measure of $\H^1$ is the Lebesgue measure of $\R^3$, the divergence in $\H^1$ is the standard divergence. Therefore, for  a smooth horizontal vector field $V = a X+ b Y$ 
 we have $\mathrm{div}(V) = Xa + Yb$.

\begin{definition}\label{d:per}
The \emph{$\phi$-perimeter} of a Lebesgue measurable set $E \subset \H^1$ in an open set $A\subset \H^1$ is 
\begin{equation*}
\P_{\phi}(E;A)=\sup \left \{  \int_{E} 
\mathrm{div}(V) 
dp : V 
\in \cD_c(A)  \textrm{ with }
 \max_{\xi\in A} \phi(V (\xi))  \leq 1 \right \}.
\end{equation*} 
When $\P_{\phi }(E;A)<\infty$ we say that $E$ has finite perimeter in $A$.
When $A=\H^1$, we let $ \P_{\phi} (E) = \P_{\phi}(E;\H^1) $.

\end{definition}

Since all the left-invariant norms in the horizontal distribution are equivalent, we have 
$\P_{\phi}(E)<\infty$  if and only if  the set $E$ has finite horizontal perimeter in the sense of  \cite{CDG94,FSSC96,GN96}. 

For regular sets, we can 
represent $\P_\phi(E)$ 
integrating on $\partial E$ a kernel related to the normal. Let $\nu_E$ be the Euclidean unit 
 inner normal 
 to $\partial E$.  We define the horizontal vector field  $N_E:\partial E\to \cD(\H^1)$ by 
\begin{equation*} \label{N_E}
N_E=\langle \nu_E ,X \rangle X+\langle \nu_E , Y \rangle Y,
\end{equation*}
where $\langle\cdot,\cdot\rangle$ denotes the Euclidean scalar product in $\R^3$.

\begin{proposition}[Representation formula]\label{p:repr}
Let $E\subset \H^1$ be a  set with Lipschitz boundary. 
Then for every open set $A\subset \H^1$  we have
\begin{equation}\label{eq:repr}
\P_\phi
(E;A) = \int_{\partial E\cap A} \phi^* ( N_E) \; d\cH^{2},
\end{equation}
where   $\cH^{2}$ is the standard $2$-Hausdorff measure in $\R^3$.
\end{proposition}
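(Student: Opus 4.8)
The plan is to convert the volume integral in Definition~\ref{d:per} into a boundary integral via the Gauss--Green theorem, and then to recognize $\P_\phi(E;A)$ as the anisotropic total variation of a vector measure concentrated on $\partial E$, whose $\phi^*$-density is exactly $\phi^*(N_E)$. Throughout I identify a horizontal field $V=aX+bY\in\cD_c(A)$ with the map $(a,b)\colon A\to\R^2$, so that $\phi(V)=\phi((a,b))$ and $N_E$ corresponds to $(\langle\nu_E,X\rangle,\langle\nu_E,Y\rangle)$. A direct computation in the Euclidean coordinates of $\R^3$ gives $V=(a,b,(bx-ay)/2)$, confirms that $\mathrm{div}(V)=Xa+Yb$ equals the Euclidean divergence, and yields the pointwise identity $\langle V,\nu_E\rangle=\langle V,N_E\rangle_\cD$ on $\partial E$, the vertical component of $\nu_E$ being absorbed after pairing with the horizontal frame. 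Since $\partial E$ is Lipschitz, $\nu_E$ and hence $N_E$ are defined $\cH^2$-a.e., and the divergence theorem applies: for every admissible $V$,
\[
\int_E \mathrm{div}(V)\,dp=-\int_{\partial E}\langle V,\nu_E\rangle\,d\cH^2=-\int_{\partial E\cap A}\langle V,N_E\rangle_\cD\,d\cH^2,
\]
the last equality using $\mathrm{supp}\,V\subset A$. Thus $\P_\phi(E;A)$ is the supremum of $\int_{\partial E\cap A}\langle -V,N_E\rangle_\cD\,d\cH^2$ over $V\in\cD_c(A)$ with $\phi(V)\le1$, i.e.\ the $\phi^*$-total variation on $A$ of the $\R^2$-valued measure $\mu=-N_E\,\cH^2\llcorner\partial E$.

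For the upper bound, the definition of the dual norm yields the pointwise estimate $\langle -V,N_E\rangle_\cD\le\phi^*(-N_E)=\phi^*(N_E)$ whenever $\phi(-V)=\phi(V)\le1$, using that $\phi^*$ is a norm. Integrating and taking the supremum gives $\P_\phi(E;A)\le\int_{\partial E\cap A}\phi^*(N_E)\,d\cH^2$.

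The lower bound is the heart of the matter. I would first use a measurable-selection theorem to choose $v^*\colon\partial E\to\R^2$ with $\phi(v^*)\le1$ and $\langle v^*,-N_E\rangle_\cD=\phi^*(N_E)$ $\cH^2$-a.e.; such a $v^*$ exists because the maximum defining $\phi^*$ is attained, but it is in general neither unique nor continuous, as when $\phi$ fails to be strictly convex (the crystalline case) or $\phi^*$ fails to be differentiable. Given $\epsilon>0$, I would fix a compact $K\subset\partial E\cap A$ on which $N_E$ is bounded and with $\int_{(\partial E\cap A)\setminus K}\phi^*(N_E)\,d\cH^2<\epsilon$ (if this integral is infinite, one instead makes $\int_K\phi^*(N_E)\,d\cH^2$ arbitrarily large). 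On a neighborhood of $K$ I would approximate $v^*$ in $L^1(\cH^2)$ by a continuous field (Lusin's theorem and Tietze's extension) and extend it continuously to $A$, vanishing outside a slightly larger compact set; the radial retraction $w\mapsto w/\max\{1,\phi(w)\}$ onto $\{\phi\le1\}$ restores the constraint while fixing $v^*$ and preserving $L^1$-convergence. Mollification in $\R^3$ keeps $\phi(\cdot)\le1$ by convexity of $\phi$ and Jensen's inequality and, after a cutoff that survives the constraint by positive homogeneity, produces a smooth horizontal field $V\in\cD_c(A)$. Since $\phi(V)\le1$ forces the universal pointwise bound $|\langle V,N_E\rangle_\cD|\le\phi^*(N_E)$, the contribution of $(\partial E\cap A)\setminus K$ to $\int_E\mathrm{div}(V)=-\int_{\partial E\cap A}\langle V,N_E\rangle_\cD\,d\cH^2$ is at most $\epsilon$ in absolute value, while the contribution of $K$ tends to $\int_K\phi^*(N_E)\,d\cH^2$; letting the approximation parameters and then $\epsilon$ tend to $0$ gives $\P_\phi(E;A)\ge\int_{\partial E\cap A}\phi^*(N_E)\,d\cH^2$, which combined with the upper bound proves \eqref{eq:repr}. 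The principal difficulty is precisely this construction of admissible smooth, horizontal, compactly supported test fields approximating a possibly discontinuous optimal direction field while respecting the pointwise constraint $\phi(V)\le1$; the universal bound $|\langle V,N_E\rangle_\cD|\le\phi^*(N_E)$ is what keeps the errors off $K$ controllable.
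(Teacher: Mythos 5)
Your proof is correct and follows essentially the same route as the paper's: the divergence theorem plus the duality inequality $\langle V,N_E\rangle_\cD\le\phi(V)\,\phi^*(N_E)$ for the upper bound, and a measurable selection of an optimal direction field followed by Lusin's theorem, a Tietze--Urysohn extension, and regularization for the lower bound. The only differences are cosmetic (you spell out the retraction onto $\{\phi\le 1\}$ and the Jensen argument for mollification, and you track the sign of $V$ more carefully than the paper does), so there is nothing further to add.
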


\begin{proof} Let  $V \in \cD_c (A)$  be such that $\phi( V) \leq 1$.
By the standard divergence theorem and by the definition of dual norm, we have 
\begin{equation*}
\begin{split}
\int_{E}  \mathrm{div} (V)   \,d\xi
& 
= -\int_{\partial E} \langle V, N_E\rangle_\cD  \; d\cH^2
\leq \int_{\partial E\cap A} \phi(V) \phi ^*   (N_E)  d\cH^{2}
\\
&
\leq \int_{\partial E\cap A } \phi^*(N_E)  d\cH^{2}.
\end{split}
\end{equation*}
By taking the supremum over all admissible $V $ we then obtain
\begin{equation*}
\P_\phi (E;A) \leq  \int_{\partial E\cap A} \phi^*(  N_E)  
d\cH^{2}.
\end{equation*}
 
To get the opposite inequality it is sufficient to prove that for every $\varepsilon >0$ there exists $V \in \cD _c (A)  $ such that $\phi( V ) \leq 1$
and 
	\begin{equation*}\label{eq:Rappr1}
-\int_{\partial E} \langle V, N_E\rangle_\cD  \; d\cH^2 \geq \int_{\partial E\cap A} \phi^* ( N_E)  d\cH^{2} - \varepsilon.
	\end{equation*}
	Here, without loss of generality, we assume that $A$ is bounded. We will construct such a $V$ with continuous coefficients  and with compact support in $A$. The smooth case $V\in \cD_c(A)$ will follow by a standard regularization argument.
	 
	Let us  define the sets 
	\begin{equation*}
		\begin{split}
			\mathcal{U} &= \big \{ p  \in \partial E \cap A : N_E(p )  \textrm{ is defined} \big \},\quad 
			\mathcal{Z} = \big \{ p   \in \mathcal{U} : N_E (p) =0 \big \}.
		\end{split}
	\end{equation*}
From the results of \cite{B03} it follows that 
$\mathcal Z$ has vanishing $\cH^2$-measure. 
For any $p \in 
 \mathcal{U} \setminus \mathcal{Z} $ we take $ V \in \cD(p)$ such that $\phi(V)=1 $ and
\[
  \langle  V, N_E\rangle_\cD  = \phi^* ( N_E).
 \] 
In general, this choice is not unique. However, there is a selection $p\mapsto  V (p)$ that  is measurable (this follows since the coordinates are measurable, see 
for instance \cite[Theorem~8.1.3]{AubinFrank}). 
 We extend $ V$ to $\mathcal Z$ letting $ V =0$ here. This extension is still measurable.

	   Since $\partial E \cap A$ has
	finite $\cH^{2}$-measure, by Lusin's theorem there exists a compact set $K_\varepsilon \subset \partial E \cap A$ 
	such that $\cH^{2}\big( (\partial E \cap A) \setminus K_\varepsilon \big) < \varepsilon$ and the restriction of $ V $ to $K_\varepsilon$ 
	is continuous.
	Now, by  Tietze--Uryshon theorem we extend $V$ from $K_\varepsilon$ to $A$ in such a way that 
	the extended map, still denoted by $V$, is continuous with   compact support in $A$ and satisfies $\phi (V) \leq 1$ everywhere.

	Our construction yields the following
	\begin{equation*}
		\begin{split}
			\int_{\partial E \cap A} \phi^* ( N_E)  d\cH^{2} &= \int_{K_\varepsilon} \langle  V, N_E\rangle_\cD  \, d\cH^{2} 
							+ \int_{(\partial E \cap A)\setminus K_\varepsilon} \phi^* ( N_E )  d\cH^{2} 
							\\
				&= \int_{\partial E \cap A } \langle  V, N_E\rangle_\cD  
				\, d\cH^{2} - 
							\int_{(\partial E \cap A)\setminus K_\varepsilon} \left( \langle  V, N_E\rangle_\cD    - \phi^* (N_E)
							\right) d\cH^{2} 
							 \\
				&\leq \int_{\partial E \cap A} \langle  V, N_E\rangle_\cD   \, d\cH^{2} + C\varepsilon.
		\end{split}
	\end{equation*}
In the last inequality we used the fact that $\langle  V, N_E\rangle_\cD    - \phi^*(  N_E) $ is bounded and 
	$\cH^{2}\big( (\partial E \cap A) \setminus K_\varepsilon \big) < \varepsilon$.
The claim follows.
\end{proof}

 \section{Existence of isoperimetric sets} \label{s:ex}
For a measurable set $E\subset \H^1$ with positive and finite measure 
and a given norm $\phi$ on $\R^2$
we define the  $\phi$-\emph{isoperimetric quotient} as
\[
\operatorname{Isop_{\phi }}(E)=\dfrac{\P_{\phi}(E)}{\mathcal L^3(E)^{\frac{3}{4}}},
\]
where $\mathcal L^3$ denotes the Lebesgue measure of $\R^3$.

 The isoperimetric quotient is invariant under left-translation (w.r.t.\ the operation in \eqref{eq:*Heis}), i.e., $\operatorname{Isop}_{\phi}(p*E)=\operatorname{Isop}_{\phi}(E)$ for any $p\in\H^1$ and $E\subset\H^1$ admissible, and it is $0$-homogeneous with respect to the one-parameter family of automorphisms \eqref{eq:dilat},  i.e.,  $\operatorname{Isop}_{\phi}(\lambda E )=\operatorname{Isop}_{\phi}(E)$, 
 where $\lambda E = \delta_\lambda(E)$.  

The isoperimetric problem \eqref{eq:ip} is then equivalent to minimizing the isoperimetric quotient among all admissible sets. 
Namely,  given $m\in(0,\infty)$, any isoperimetric set $E\subset\H^1$ with $\mathcal L^3(E)=m$ is a solution to 
\begin{equation}\label{eq:isop1}
C_I=\inf\left\{\operatorname{Isop}_{\phi}(E) : E\subset \H^1\text{ measurable, }0<\mathcal L^3(E)<\infty\right\},
\end{equation}
and, \emph{vice versa}, any  solution $E\subset\H^1$ to \eqref{eq:isop1} solves \eqref{eq:ip} within its volume class, i.e., with $m=\mathcal L^3(E)$.
In particular, we have 
\begin{equation}\label{eq:cisop}
C_I=\inf\left\{\P_{\phi}(E) : E\subset \H^1\text{ measurable, }\mathcal L^3(E)=1\right\}.
\end{equation}
The constant $C_I$ depends on $\phi$.

Since $\P_\phi$ is equivalent to the standard  horizontal   
perimeter, the   isoperimetric inequality in \cite{GN96} implies that $C_I>0$ and the validity of the following inequality for any measurable set $E$ with finite measure:
\begin{equation}\label{eq:iisop}
\P_{\phi}(E)\geq C_I \mathcal L^3(E)^{\frac{3}{4}}.
\end{equation}
The constant $C_I$ is the largest one making true the above inequality and isoperimetric sets are precisely those for which \eqref{eq:iisop} is an equality.

\begin{theorem}[Existence of isoperimetric sets]\label{thm:ex}  
Let $\phi$ be any norm on $\R^2$. There exists a set $E\subset\H^1$ with 
non-zero and finite $\phi$-perimeter such that  
\begin{equation}\label{eq:ex}
\P_\phi(E)=C_I \mathcal L^3(E)^{\frac{3}{4}}.
\end{equation}
\end{theorem}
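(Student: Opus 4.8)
The plan is to combine the direct method with the concentration--compactness principle, following the arguments in \cite{LR03}. First I would fix a minimizing sequence $(E_h)_{h\in\N}$ for the isoperimetric quotient, that is, measurable sets with $0<\mathcal L^3(E_h)<\infty$ and $\operatorname{Isop}_\phi(E_h)\to C_I$. Exploiting the invariance properties of $\operatorname{Isop}_\phi$ recalled above --- its left-invariance and its $0$-homogeneity under the dilations $\delta_\lambda$ --- I would replace each $E_h$ by a suitable dilate $\delta_{\lambda_h}(E_h)$, so that without loss of generality $\mathcal L^3(E_h)=1$ for every $h$. Then $\P_\phi(E_h)\to C_I$, and since $\P_\phi$ is comparable to the standard horizontal perimeter, the sequence $(E_h)$ has uniformly bounded horizontal perimeter and unit volume.

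The analytic tools I would invoke are: (i) the compactness theorem for sets of finite horizontal perimeter, i.e.\ the local $L^1$-compactness of uniformly perimeter-bounded sequences; (ii) the lower semicontinuity of $\P_\phi$ with respect to $L^1_{\mathrm{loc}}$ convergence, which is immediate from the supremum in Definition~\ref{d:per}, since for each fixed $V\in\cD_c(A)$ the functional $E\mapsto\int_E\mathrm{div}(V)$ is continuous under $L^1_{\mathrm{loc}}$ convergence of the indicator functions; and (iii) the relative (local) isoperimetric inequality on metric balls for the horizontal perimeter, which transfers to $\P_\phi$ by equivalence. With these at hand I would apply the concentration--compactness alternative to the measures $\chi_{E_h}\mathcal L^3$, obtaining one of the three regimes: vanishing, dichotomy, or concentration.

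Next I would rule out the first two regimes. Vanishing would mean $\sup_{p\in\H^1}\mathcal L^3(E_h\cap B(p,R))\to 0$ for every $R>0$; covering $\H^1$ by balls of radius $R$ with bounded overlap and summing the relative isoperimetric inequality would then force $\P_\phi(E_h)\to\infty$, contradicting the uniform perimeter bound. Dichotomy would split each $E_h$, up to a vanishing error, into two pieces $E_h^1,E_h^2$ carrying masses $m_1,m_2>0$ with $m_1+m_2=1$ and separated by arbitrarily large distance, whence $\P_\phi(E_h)\ge \P_\phi(E_h^1)+\P_\phi(E_h^2)-o(1)\ge C_I(m_1^{3/4}+m_2^{3/4})-o(1)$ by \eqref{eq:iisop}. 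Since $t\mapsto C_I t^{3/4}$ is strictly concave and vanishes at $0$, it is strictly subadditive, so $C_I(m_1^{3/4}+m_2^{3/4})>C_I(m_1+m_2)^{3/4}=C_I$, contradicting $\P_\phi(E_h)\to C_I$. Hence only concentration survives.

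In the concentration regime there exist points $p_h\in\H^1$ such that the translated sets $p_h*E_h$ converge in $L^1_{\mathrm{loc}}$ to a set $E$ with $\mathcal L^3(E)=1$, no mass escaping to infinity. By the left-invariance of $\P_\phi$ and its lower semicontinuity, $\P_\phi(E)\le\liminf_h\P_\phi(p_h*E_h)=\liminf_h\P_\phi(E_h)=C_I$; since $C_I$ is the infimum and $\mathcal L^3(E)=1$, equality holds, so $E$ has non-zero and finite $\phi$-perimeter and realizes \eqref{eq:ex}. The main obstacle is exactly the loss of compactness caused by the non-compact symmetry group: the two delicate points are the exclusion of dichotomy, which rests on the strict subadditivity of the isoperimetric profile $m\mapsto C_I m^{3/4}$, and the verification that no volume is lost in the limit, both of which I would handle through the concentration--compactness machinery together with the relative isoperimetric inequality.
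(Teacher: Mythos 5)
Your proposal is correct in outline and reaches the result, but it is organized around the full concentration--compactness trichotomy, whereas the paper follows \cite{LR03} more literally and never invokes dichotomy. Concretely, the paper (i) proves directly that a minimizing sequence of unit volume satisfies $\mathcal L^3(E_k\cap B(p_k,1))\geq m_0>0$ for some uniform $m_0$ --- this is your exclusion of vanishing, obtained exactly as you describe from the relative isoperimetric inequality of \cite{GN96} plus the bounded overlap of the balls $B(p,\lambda)$, following \cite[Lemma~4.1]{LR03}; and then (ii) after translating, extracting an $L^1_{\mathrm{loc}}$ limit $E$ and using lower semicontinuity, it shows $\mathcal L^3(E)=1$ not by ruling out dichotomy but by a truncation argument in the spirit of \cite[Lemma~4.2]{LR03}, which produces a radius $R$ with $\mathcal L^3(E\cap B(0,R))=1$ via a differential inequality for the volume in balls. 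The key technical ingredient there is the relation between perimeter and the derivative of the volume of $E\cap B(0,r)$ with respect to $r$ (\cite[Lemma~3.5]{Amb01}), and this is precisely what your dichotomy-exclusion step silently consumes: the inequality $\P_\phi(E_h)\geq \P_\phi(E_h^1)+\P_\phi(E_h^2)-o(1)$ is not free, since cutting $E_h$ along a metric sphere creates new perimeter, and one must average over radii using that same perimeter--volume relation to make the error small. So the two routes are essentially equivalent in the ingredients they require; yours buys a cleaner conceptual statement (strict subadditivity of $m\mapsto C_I m^{3/4}$, which needs $C_I>0$ from \eqref{eq:iisop}, forbids splitting of mass), while the paper's buys a shorter path by citing the two lemmas of \cite{LR03} directly and avoiding the formal dichotomy machinery. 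If you flesh out the cutting construction in the dichotomy step, your argument is complete.
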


Theorem~\ref{thm:ex} follows by applying the strategy of \cite[Section 4]{LR03}. 
We provide a sketch of the proof only for the sake of completeness. 
In the sequel we denote the left-invariant homogeneous ball centered at $p\in\H^1$ of radius $r>0$ by $B(p,r)$.   

\begin{proof}[Proof of Theorem~\ref{thm:ex}]
By perimeter and volume homogeneity with respect to $\{\delta_\lambda\}_{\lambda\in\R}$ it is enough to prove the existence of a minimizing set in the class of volume   $\mathcal L^3(E)=1$. Let $\{E_k\}_{k\in\N}$ be a minimizing sequence for \eqref{eq:cisop} such that for $k\in\N$ we have
\[
\mathcal L^3(E_k)=1,\qquad \P_{\phi}(E_k)\leq C_I\left(1+\frac{1}{k}\right).
\]

Assume that  there exists $m_0\in(0,1/2)$ such that for any $k\in\N$ there exists $p_k\in\H^1$ satisfying
\begin{equation}\label{eq:ass}
\mathcal L^3(E_k\cap B(p_k,1))\geq m_0.
\end{equation} 
Then, the translated sequence 
$\{-p_k*E_k\}_{k\in\N}$,  still denoted $\{E_k\}_{k\in\N}$,  is also minimizing for \eqref{eq:cisop} and satisfies 
$\mathcal L^3(  E_k \cap B(0,1)) \geq m_0$.

Since $\P_\phi$ is equivalent to the standar horizontal perimeter,
we have a compactness theorem for sets of finite $\phi$-perimeter as in \cite[Theorem~1.28]{GN96}. Then, we can extract a sub-sequence, still denoted $\{ E_k\}_{k\in\N}$, converging in the $L^1_{\mathrm{loc}}(\H^1)$ sense to a set $E\subset\H^1$ of finite $\phi$-perimeter. The lower semi-continuity of $\P_\phi$ therefore implies 
\begin{equation*}\label{eq:existence4}
	\P_{\phi}(E) \leq \liminf_{k\to \infty} \P_{\phi}( E_k) \leq C_I.
\end{equation*}
Moreover, we have
\begin{equation}\label{eq:existence5}
\begin{split}
& 
\mathcal L^3(E) \leq \liminf_{k\to \infty} \mathcal L^3( E_k)=1 \quad \text{  and  }
\\
&
\mathcal L^3(E \cap B(0,1) ) = \lim_{k\to \infty} \mathcal L^3(  E_k \cap B(0,1)) \geq 
m_0. 
\end{split}
\end{equation}

To prove \eqref{eq:ex} we are left to show that $\mathcal L^3(E)=1$, which  follows by applying a sub-Finsler version of \cite[Lemma~4.2]{LR03}, ensuring existence of a radius $R>0$ such that $\mathcal L^3(E\cap B(0,R))=1$. This is based on \eqref{eq:existence5} and on a canonical relation between perimeter and derivative of volume  in balls with respect to the radius,   
which is valid in quite general metric structures, including sub-Finsler ones, see \cite[Lemma~3.5]{Amb01}.

We conclude by justifying the assumption \eqref{eq:ass}. This follows by a sub-Finsler version of \cite[Lemma~4.1]{LR03}. Using once more the equivalence of $\P_{\phi}$  with the standard horizontal perimeter,  
we deduce from \cite[Theorem~1.18]{GN96} the validity of the following \emph{relative isoperimetric inequality} holding for a constant $C>0$ and any measurable set $E$ with finite measure 
\begin{equation*}\label{eq:riisop}
	\min \left\{  \mathcal L^3(B\cap E)^{\frac{3}{4}}, \mathcal L^3(B\setminus E)^\frac{3}{4} \right\} \leq C \P_{\phi}(E, \lambda B),
\end{equation*}
where $\lambda\geq 1$ is  
a constant  depending only on  $\phi$, and $B$ is any left-invariant homogeneous ball.
Together with the fact that the family $\{B(p,\lambda) : p\in\H^1\}$ has bounded overlap, we can reproduce the argument of \cite[Lemma~4.1]{LR03} and prove the claim.
\end{proof}

\begin{remark}\label{rem:bdd}
Following the arguments of \cite[Lemma~4.2]{LR03},  one also shows that any isoperimetric set is equivalent to a bounded and connected one (i.e., it is bounded and connected up to sets of zero Lebesgue measure). 
\end{remark}

\section{First variation of the isoperimetric quotient}\label{s:fv}

In this section we derive  a first order 
necessary condition for $\phi$-isoperimetric sets, both when $\phi$ is regular or not. 

\subsection{Notation} We now introduce some notation that will be used throughout the paper.  

       Let $E,A\subset \H^1$ be sets such that   $E$ is closed, $A$ is open  and there exists a  function $g\in \C^1( A)$, called  \emph{defining function for $\partial E\cap A$}, such that $\partial E\cap A=\{p\in A : g(p)=0\}$ and $\nabla g(p)\ne 0$ for every $p\in \partial E\cap A$. 
  We say that $ E\cap A$ is a \emph{$z$-subgraph} if there exist an open set $D\subset \R^2$ and a function $ f\in  \C^1 (D)$, called 
   %the 
   \emph{graph function} for $\partial E\cap A$,
   such that 
  \[
    E\cap A  = \{ (\xi, z)\in A : \xi \in D \textrm{ and } z\leq f(\xi) \}.
    \]
In this case, $g(\xi,z)=f(\xi)-z$ is a defining function for  $\partial E\cap A$.

The definition of \emph{$z$-epigraph} is analogous and all results given below for $z$-subgraphs have a straightforward counterpart for 
$z$-epigraphs. 
In a similar way, 
one can also define \emph{$x$-subgraphs}, \emph{$y$-subgraphs}, \emph{$x$-epigraphs}, and \emph{$y$-epigraphs}.

Given a  function $g\in \C^1(A)  $,   
 we denote by $\mathcal G=(Xg) X+(Yg)Y
 $ the \emph{horizontal gradient} of $g$ and
 we define
 the \emph{projected horizontal gradient} as 
 \begin{equation}\label{egge}
 G=(Xg,Yg)
 \in   \R^2.
\end{equation} 
If $\partial E\cap A$ is a $z$-graph with graph function $f\in\C^1 (D)$, we define $F: D\to \R^2$ by
 \begin{equation}\label{effe}
 F(\xi)=G(\xi,f(\xi))=\nabla f(\xi)-\frac12 \xi^\perp,
\end{equation} 
and
\begin{equation}
\label{eq:charf}
\cC(f) =
\{ \xi\in D : F(\xi)=0\}.
\end{equation}
Hence $\cC(E)\cap A=\{(\xi,f(\xi)):\xi\in \cC(f)\}$, 
where $\cC(E)$ is the characteristic set of $E$, defined in \eqref{eq:char}. The set $\cC(f)$ has zero Lebesgue measure in $D$.

If $E\cap A$ is the   $z$-subgraph of a function $f\in\C^1 (D)$, 
by the representation formula \eqref{eq:repr}  
we have
 \[
 \P_\phi(E;A) 
 = \int _D \phi^*(F (\xi)  ) d\xi.
 \]

 When the dual norm $\phi^*$ is of class $\C^1$, starting from a graph function $f\in\C^1 (D)$  we define the vector field $\X: D\to \R^2$ 
by
  \begin{equation}\label{eq:Xf}
    \X (\xi ) = \nabla \n (F(\xi )),\quad 
    \xi \in D.
     \end{equation} 
  The geometric meaning of the  vector field $\X$  will be clarified in the next section, see \eqref{eq:cN}.
  
 \begin{remark} 
At any point $\xi\in D$ such that $F(\xi) \neq 0$ 
  the vector field $\X$  satisfies
\begin{equation}\label{l:norm1}
   \phi(\X(\xi)) =1,
\end{equation}
since the gradient of $\n$ at any nonzero point has norm $\phi$ equal to one (even when $\n$ is not regular, by replacing the gradient by any element of the subgradient; see, for instance, 
\cite[Theorem~1.7.4]{Schneider93}.
\end{remark}

\subsection{Regular norms}\label{ss:fv-reg}

\begin{proposition}[First variation for isoperimetric sets]
\label{pix}
  Let $\phi$ be a norm such that $\phi^*$ is of class $\C^1$.
  Let $E\subset\H^1$ be a $\phi$-isoperimetric   set such that,
for some open set  $A\subset \H^1$,
  $E\cap A$ is a  $z$-subgraph of class $\C^1$. Then the graph function $f\in \C^1(D)$ satisfies in the weak sense the partial differential equation
\begin{equation}\label{EQU}
 \mathrm{div}\big (\X\big) =   -
 \frac{3}{4} \frac{\P_{\phi}(E)}{\mathcal {L} ^3(E)} 
 \quad \textrm{in } D.
\end{equation}

\end{proposition}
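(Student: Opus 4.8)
The plan is to derive the Euler--Lagrange equation for the isoperimetric quotient by computing the first variation of $\operatorname{Isop}_\phi(E)$ under a suitable family of volume-preserving normal perturbations supported in $A$, and to recognize the resulting stationarity condition as \eqref{EQU}. Since $E\cap A$ is a $z$-subgraph with graph function $f\in\C^1(D)$, I would introduce a perturbation $f_t = f + t\, u$ for a test function $u\in\C^\infty_c(D)$ and a small parameter $t$, letting $E_t$ be the corresponding $z$-subgraph. The perimeter and volume of $E_t$ are then
\[
\P_\phi(E_t;A) = \int_D \phi^*\big(F_t(\xi)\big)\,d\xi,
\qquad
\mathcal L^3(E_t) = \mathcal L^3(E) + t\int_D u\,d\xi + o(t),
\]
where $F_t(\xi) = \nabla f_t(\xi) - \tfrac12\xi^\perp = F(\xi) + t\,\nabla u(\xi)$, using \eqref{effe}.

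\textbf{First step: differentiate the perimeter.} Using the $\C^1$-regularity of $\phi^*$ and the fact that $F(\xi)\neq 0$ for $\xi\in D\setminus\cC(f)$ (a null set, by the remark after \eqref{eq:charf}), I would differentiate under the integral sign to obtain
\[
\frac{d}{dt}\Big|_{t=0}\P_\phi(E_t;A)
= \int_D \big\langle \nabla\phi^*(F(\xi)),\,\nabla u(\xi)\big\rangle\,d\xi
= \int_D \big\langle \X(\xi),\,\nabla u(\xi)\big\rangle\,d\xi,
\]
recalling the definition $\X = \nabla\n(F)$. The contribution of the null set $\cC(f)$ is negligible because $\nabla\phi^*$ is bounded (it has unit $\phi$-norm by \eqref{l:norm1}), so the integrand is bounded and the dominated convergence theorem applies.

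\textbf{Second step: impose the isoperimetric (stationarity) condition.} Since $E$ minimizes the quotient $\operatorname{Isop}_\phi = \P_\phi(E)/\mathcal L^3(E)^{3/4}$, and the perturbation is localized in $A$, the derivative of the quotient must vanish at $t=0$ for every admissible $u$. Writing $P = \P_\phi(E)$ and $V = \mathcal L^3(E)$, the quotient rule gives
\[
0 = \frac{d}{dt}\Big|_{t=0}\operatorname{Isop}_\phi(E_t)
= V^{-3/4}\,\frac{d}{dt}\Big|_{t=0}\P_\phi(E_t)
 - \tfrac34\,P\,V^{-7/4}\,\frac{d}{dt}\Big|_{t=0}\mathcal L^3(E_t),
\]
so that $\int_D \langle \X,\nabla u\rangle\,d\xi = \tfrac34\,\frac{P}{V}\int_D u\,d\xi$ for all $u\in\C^\infty_c(D)$. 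Integrating by parts (in the weak sense, which is what the statement asks for) transfers the gradient onto $\X$ and yields $-\int_D u\,\mathrm{div}(\X)\,d\xi = \tfrac34\,\frac{P}{V}\int_D u\,d\xi$; since $u$ is arbitrary, this is exactly \eqref{EQU} in the weak sense.

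\textbf{Main obstacle.} The delicate point is the differentiation of the perimeter integral across the characteristic set $\cC(f)$, where $F$ vanishes and $\n$ fails to be differentiable (the gradient $\nabla\n$ may be discontinuous at the origin). I would handle this by noting that $\cC(f)$ has zero Lebesgue measure and that $\X$, though possibly discontinuous there, remains bounded by \eqref{l:norm1}; thus the difference quotient $t^{-1}\big(\phi^*(F+t\nabla u)-\phi^*(F)\big)$ converges pointwise a.e.\ and is dominated by $|\nabla u|$ (by convexity and $1$-homogeneity of $\phi^*$, whose gradient is bounded). This legitimizes passing the limit inside the integral. A secondary technical care is ensuring the perturbation stays within $A$ and $D$ and preserves the graph structure for small $t$, which follows from the compact support of $u$ and continuity of $f$; the overall volume constraint does not need to be imposed exactly because the quotient formulation already builds in the volume normalization through the factor $\tfrac34 P/V$.
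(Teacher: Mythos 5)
Your proposal is correct and follows essentially the same route as the paper: perturb the graph function by $t\,u$ with $u\in\C^\infty_c(D)$, use the representation formula to get $\frac{d}{dt}\P_\phi(E_t;A)\big|_{t=0}=\int_D\langle\X,\nabla u\rangle\,d\xi$ and $\frac{d}{dt}\mathcal L^3(E_t)\big|_{t=0}=\int_D u\,d\xi$, and impose stationarity of the scale-invariant isoperimetric quotient to obtain the weak form of \eqref{EQU} with the correct sign. Your extra care about differentiating under the integral across the null set $\cC(f)$ (dominated convergence via the bound $|\phi^*(a+b)-\phi^*(a)|\le\phi^*(b)$) is a legitimate filling-in of a step the paper leaves implicit.
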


\begin{proof}
For small $\varepsilon\in\R$ and 
$\varphi\in \C^\infty_c(D)$ let 
$E_\varepsilon\subset \H^1$ be the set such that 
\[
E_\varepsilon \cap A 
=\{(\xi ,z)\in A :  z\leq f(\xi)+\varepsilon\varphi(\xi ),\ \xi \in D\},
\]
and $E_\varepsilon \setminus A 
=E\setminus A 
$.
Starting from the representation formula 
\begin{equation}\label{eq:svi1}
\P_{\phi}(E_\varepsilon;A) = \int_{\partial E_\varepsilon \cap A} \phi^*(   N _{E_\varepsilon})  d \cH^2 
= \int_D  \phi^* (F
+\varepsilon (X\varphi,Y\varphi)
) d\xi,
\end{equation}
we compute the derivative
\begin{equation}\label{eq:der-per}
{\P_\phi}' = \left. 
\frac{d}{d\varepsilon}\P_{\phi}(E_\varepsilon;A) \right|_{\varepsilon=0}
= \int_D \langle  \X 
, (X\varphi,Y\varphi)
\rangle d\xi
=\int_D \langle  \X 
, \nabla \varphi
\rangle d\xi.  
\end{equation}
  On the other hand, the derivative of the volume is  
\[
\left. 
\mathcal {V} ' = \frac{d}{d\varepsilon}\mathcal {L}^3  (E_\varepsilon ) \right|_{\varepsilon=0}
= \int_D  \varphi\, d\xi.  
\]
 Inserting these formulas 
 into
 \[
  0 = 
\left. 
\frac{d}{d\varepsilon} \frac{ \P_{\phi}(E_\varepsilon
)^{4}}{\mathcal {L}^3(E_\varepsilon
)^{3}}  \right|_{\varepsilon=0}
= 
\frac
{\P_{\phi}(E)^{3}}{\mathcal {L}^3(E 
)^{4}}\big( 4 {\P_{\phi}}' 
\mathcal {L}^3(E 
)-3 \mathcal {V}' \P_{\phi}(E )
\big),
\]
 we obtain
\[
 \int_D 
 \langle  \X
 , \nabla \varphi
 \rangle d\xi
 = \frac 34 
 \frac{\P_{\phi}(E)}
 {\mathcal {L}^3(E 
)}
\int_D  \varphi\, d\xi
\]
  for any test function  $\varphi\in \C^\infty_c(D)$.
This is our claim.  
\end{proof}

{
Proposition \ref{pix} still holds if we only have $f\in\mathrm{Lip}
(D)$.
} 
If   $\phi^*$ is of class $\C^2$ and   $f \in \C^2(D)$ then we have  $\X \in \C^1(D\setminus \cC(f) ;\R^2)$.
So  equation \eqref{EQU}
is satisfied pointwise in $D\setminus \cC(f) $ in the strong sense.

\begin{definition} \label{def:zphicurv} Let $f\in \C^2(D)$ and $\mathcal Xf$ be defined as in \eqref{eq:Xf}.
We call the function $H_\phi  :  D\setminus \cC(f)\to\R$
\begin{equation}
\label{eq:curvz}
H_{\phi}(\xi) =  \mathrm{div}\big (\X (\xi) \big) ,\quad \xi \in   D\setminus \cC(f),
\end{equation}
the \emph{$\phi$-curvature of the graph $\mathrm{gr}(f)$}. 
{We say that $\mathrm{gr}(f)$ has \emph{constant $\phi$-curvature} if there exists $h\in\R$ such that $H_\phi=h$ on $D\setminus\cC(f)$.
Finally, we say that $\mathrm{gr}(f)$ is \emph{$\phi$-critical} if there exists $h\in\R$ such that
 \begin{equation}\label{eq:daverif}
 \int_D 
 \langle  \X
 , \nabla \varphi
 \rangle d\xi
 = - h 
\int_D  \varphi\, d\xi
 \end{equation}
 is satisfied for every $\varphi\in C^\infty_c(D)$.
}
\end{definition}

 Proposition \ref{pix} then asserts that the part of the  boundary of a $\phi$-isoperimetric set of class $\C^2$ that can be represented as a $z$-graph {is $\phi$-critical and in particular} 
it  has constant $\phi$-curvature at noncharacteristic points.

 {

\begin{remark}\label{rem:x-graphs}
Let us discuss how the proof of Proposition~\ref{pix} can be 
adapted to the case where $E\cap A$ is   a $x$-subgraph of class $\C^2$. The case of  $y$-subgraphs is analogous.  
We have a defining function for  $\partial E\cap A$  of the type $g(x,y,z)=f(y,z)-x$ with $f\in \C^2  (D)$. 
The projected horizontal gradient in \eqref{egge} reads
\[
  G(y,z) = \Big(-1-\frac 1 2  yf_z, f_y +\frac 12 f f_z\Big).
\]
For $\varepsilon \in\R $ and $\varphi\in \C_c^\infty (D)$ let $E_\varepsilon $ be   the $x$-subgraph in $A$ of $f+\varepsilon\varphi$. Then the derivative of the $\phi$-perimeter of $E_\varepsilon$ is  
\begin{equation*}
\begin{split}
\left.
\frac{d}
{d\varepsilon}
\P_{\phi}(E_\varepsilon;A)\right|_{\varepsilon=0} 
 &
 = \int_D  \big \langle \nabla \phi^* (G) , \big( -y \varphi_z /2 ,    \varphi _y +(\varphi f)_z /2 \big) \big \rangle dydz 
 \\
 & = - \int_D   \varphi (y,z)\,  \mathcal L f(y,z) \, dydz ,
\end{split}
\end{equation*}
where $\mathcal L : \C^2(D)\to \C(D)$ is the partial differential operator
\begin{equation}
\label{eq:Delta}
  \mathcal L f =  \Big( \frac {\partial }{\partial y}+ 
  \frac {f}{2}
  \frac {\partial }{\partial z}\Big) {\phi}^*_b(G)-\frac y  2 \frac {\partial }{\partial z}{\phi}^*_a(G),
\end{equation} 
{with} %We let 
$\nabla {\phi}^* = ({\phi}_a^*,{\phi}_b^*)$.
 
The statement for $x$-graphs is then that if $E\subset\H^1$ is $\phi$-isoperimetric and 
$E\cap A$ is a  $x$-subgraph with graph function $f\in \C^2 (D)$, then 
\[
\mathcal L f = -
 \frac{3}{4} \frac{\P_{\phi}(E)}{\mathcal {L} ^3(E)}\quad \textrm
 {in }D.
\]
When we only have $f\in\mathrm
{Lip }(D)$, $\mathcal L f $ is well-defined in the distributional sense. 

\end{remark}

 }

\subsection{Crystalline norms} \label{ss:fv-cr}

In this section  we focus on a norm  $\phi$ having non-differentia\-bility points, and in particular on the case where it is
crystalline.
Recall that the dual norm $\phi^*$ to a non-differentiable one is not strictly convex, so that $\nabla \phi^*$ is constant on subsets of $\R^2$ having nonempty interior.

{
\begin{lemma}\label{lem:foliazione1}
Let   $O$ be a %an open
subset of $\R^2$ where $\nabla \phi^*$ exists and is constant.
Let $E\subset\H^1$ be
such that   $E\cap A
=\{(\xi,z)\in A :  z\leq f(\xi),\ \xi\in D\}$
for some open set  $A\subset \H^1$
and
$f\in \mathrm{Lip}(D)$.
If  $F(\xi)\in O$
 for almost
 every $\xi\in D$ then $E$ is not $\phi$-isoperimetric.
\end{lemma}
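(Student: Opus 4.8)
The plan is to argue by contradiction: assuming that $E$ is $\phi$-isoperimetric, I would exhibit an admissible competitor with strictly smaller isoperimetric quotient. Fix $\varphi\in\C^\infty_c(D)$ with $\varphi\ge 0$ and $\varphi\not\equiv 0$, and for small $\varepsilon\in\R$ let $E_\varepsilon$ be the set with $E_\varepsilon\cap A=\{(\xi,z)\in A : z\le f(\xi)+\varepsilon\varphi(\xi),\ \xi\in D\}$ and $E_\varepsilon\setminus A=E\setminus A$, exactly as in the proof of Proposition~\ref{pix}. Since the modification is localized in $\{\xi\in\mathrm{supp}\,\varphi\}\Subset D$, the perimeter change is captured by the representation formula, giving $\P_\phi(E_\varepsilon)-\P_\phi(E)=\int_D\big(\n(F+\varepsilon\nabla\varphi)-\n(F)\big)\,d\xi$, while the volume is exactly affine, $\mathcal L^3(E_\varepsilon)=\mathcal L^3(E)+\varepsilon\int_D\varphi\,d\xi$, with $\int_D\varphi\,d\xi>0$. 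For $\varepsilon$ small, $E_\varepsilon$ has finite positive volume and finite $\phi$-perimeter, hence is admissible.

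The heart of the argument is that the first variation of the perimeter vanishes. Since $F(\xi)\in O$ for almost every $\xi$ and $\nabla\n$ equals a fixed vector $w\in\R^2$ on $O$, we have $\X(\xi)=\nabla\n(F(\xi))=w$ a.e., and at each such $\xi$ the directional derivative of $\n$ at $F(\xi)$ is the linear map $v\mapsto\langle w,v\rangle$. The map $\varepsilon\mapsto\P_\phi(E_\varepsilon)$ is convex, being an integral of the convex functions $\varepsilon\mapsto\n(F(\xi)+\varepsilon\nabla\varphi(\xi))$, so its one-sided derivatives exist. I would compute the right derivative by passing to the limit in the difference quotients: these converge a.e.\ to $\langle w,\nabla\varphi\rangle$ and are dominated by $\mathrm{Lip}(\n)\,|\nabla\varphi|\in L^1(D)$ (recall that $\varphi$ has compact support and that $\n$ is Lipschitz). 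Dominated convergence then yields
\[
\frac{d}{d\varepsilon}\Big|_{\varepsilon=0^+}\P_\phi(E_\varepsilon)=\int_D\langle w,\nabla\varphi\rangle\,d\xi=\Big\langle w,\int_D\nabla\varphi\,d\xi\Big\rangle=0,
\]
the last integral vanishing because $\varphi\in\C^\infty_c(D)$. The left derivative is computed identically, so $\varepsilon\mapsto\P_\phi(E_\varepsilon)$ is differentiable at $0$ with vanishing derivative.

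With these facts I would conclude by differentiating the quotient. Since $\P_\phi(E)>0$ (isoperimetric sets have positive perimeter, as $C_I>0$) and $\mathcal L^3(E)>0$, the chain rule gives
\[
\frac{d}{d\varepsilon}\Big|_{\varepsilon=0}\operatorname{Isop}_\phi(E_\varepsilon)=-\frac34\,\frac{\P_\phi(E)}{\mathcal L^3(E)^{7/4}}\int_D\varphi\,d\xi<0,
\]
so that $\operatorname{Isop}_\phi(E_\varepsilon)<\operatorname{Isop}_\phi(E)$ for small $\varepsilon>0$, contradicting the minimality of $E$ and thus proving that $E$ is not $\phi$-isoperimetric. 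The main technical point to handle with care is precisely the vanishing of the first variation of the perimeter: one must justify both the localization of the perimeter difference into the integral over $D$ and the interchange of limit and integral in the one-sided derivative, both of which rest on the convexity and Lipschitz continuity of $\n$ together with the hypothesis $F(\xi)\in O$ almost everywhere. The remaining steps are elementary.
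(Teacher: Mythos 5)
Your proposal is correct and follows essentially the same route as the paper: perturb the graph by $\varepsilon\varphi$, observe that the first variation of the perimeter vanishes because $\nabla\phi^*(F)$ is a.e.\ equal to a fixed vector $w$ and $\int_D\langle w,\nabla\varphi\rangle\,d\xi=0$ for compactly supported $\varphi$, and then use the nonzero volume variation to strictly decrease the isoperimetric quotient. Your justification of the differentiation under the integral sign via convexity and dominated convergence is a welcome refinement of a step the paper leaves implicit, but it does not change the argument.
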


}

\begin{proof}
As in the proof of Proposition~\ref{pix}, consider
$\varphi\in \C^\infty_c(D)$ and, for $\varepsilon\in\R$ small, let
$E_\varepsilon\subset \H^1$ be the set such that
\[
E_\varepsilon \cap A
=\{(\xi,z)\in A :  z\leq f(\xi)+\varepsilon\varphi(\xi),\ \xi\in D\},
\]
and $E_\varepsilon \setminus A
=E\setminus A
$.
Then, as in  \eqref{eq:der-per},
\[
{\P_{\phi}}' =
\left.
\frac{d}{d\varepsilon}\P_{\phi}(E_\varepsilon;A) \right|_{\varepsilon=0}
= \int_D \langle {\nabla\phi^*(F)}, \nabla \varphi\rangle d\xi.
\]

\noindent
By hypothesis,  {$\nabla\phi^*(F)$} 
is constant on $D$, so that  ${\P_{\phi}}' =0$.

Now, choosing   $\varphi\neq 0$ with constant sign, we deduce that
\[\left.
\frac{d}{d\varepsilon} \frac{ \P_{\phi}(E_\varepsilon
)^{4}}{\mathcal {L}^3(E_\varepsilon
)^{3}}  \right|_{\varepsilon=0}
=
{
-3\frac
{\P_{\phi}(E)^{4}}{\mathcal {L}^3(E
)^{4}}  \int_D   \varphi(\xi)d\xi
\ne 0,
}
\]
contradicting the extremality of $E$ for the isoperimetric quotient.
\end{proof}

We are ready for the proof of   Theorem \ref{thmi:folz}.
This theorem disproofs Conjecture~8.0.1 in \cite{Sanchez-thesis}, where Pansu's bubble was conjectured to solve the isoperimetric problem for crystalline norms.

Let $\phi$ be a crystalline norm and denote by $v_1,\ldots, v_{2N}\in \R^2 $ the ordered vertices of the polygon $C_\phi=C_\phi(0,1)$. 
Notice that $v_{i+N}=-v_{i}$ for $i=1,\dots,N$.
The dual norm   $\phi^*$ is also crystalline and the vertices of $C_{\phi^*}(0,1)$
are in one-to-one correspondence with the edges $e_i = v_i-v_{i-1}$ of 
$C_\phi(0,1)$ (with $v_0=v_{2N}$). Namely,
$C_{\phi^*}(0,1)$
is the convex hull of $v_1^*,\ldots, v_{2N}^*$  where, for $i=1,\dots,2N$,  the vertex $v_i^*$ is the unique vector  of $\R^2$ 
such that 
\begin{equation}
\label{ORTO}
\langle v_i^*, e_i\rangle =0
\end{equation}
and
$\langle v_i^*,v_i\rangle =
\langle v_i^*,v_{i-1} \rangle=1$. 
In particular, $v_{i+N}^*=-v_{i}^*$ for $i=1,\dots,N$.

 Along the lines $L_i  =\R v_i^*$, the norm $\phi^*$ is not differentiable.
In the positive convex
cone bounded by $\R^+ v_i^*$ and $\R^+ v_{i+1}^*$ the gradient $\nabla \phi^*$ exists and is constant, and we have $\nabla \phi^* = v_i$. 
For piecewise $\C^1$-smooth $\phi$-isoperimetric sets
the projected horizontal gradient $F$
takes values in $L_1\cup\ldots\cup L_{N}$, by Lemma~\ref{lem:foliazione1}.

\begin{proof}[Proof of Theorem~\ref{thmi:folz}]
Let $f\in \C^2(D)$ be  the graph function of $\partial E\cap A$.
For $i=1,\ldots, N$, we let
\begin{equation*}\label{eq:SXY}
D_i=\{ \xi \in D:
F(\xi)\in
{L_i =
\R v_i^*
}
\}.
\end{equation*}
If $\xi\in D_i$ then by \eqref{ORTO} we have
\[
F(\xi)^\perp \in \R(  v_i^*)^\perp = \R e_i.
\]
This implies that
the vector field $X_i$  in \eqref{eq:X_i}
is tangent to $\partial E\cap A$ at the point $(\xi,f(\xi))$.

We are going to prove the theorem by showing that $D=D_i$ for some~$i\in \{1,\dots,N\}$.
Notice that, for $i,j\in \{1,\dots,N\}$ and $i\ne j$, 
$v_i$ and $v_j$ are linearly independent.
By Lemma~\ref{lem:foliazione1} we have that $D=\cup_{i=1}^N D_i$.
We claim, moreover, that
\begin{equation}\label{eq:cover}
\overline{D}= \cup_{i=1}^{N} \overline{\operatorname{int}D_i}.
\end{equation}
In order to check the claim, pick $\xi\in D$ and assume by contradiction that
$\xi\not\in \overline{\operatorname{int}D_i}$ for $i=1,\dots,N$.
Let $i_1$ be such that $\xi\in D_{i_1}$.
Since $\xi\not\in \operatorname{int}D_{i_1}$, for every $\epsilon>0$ the set $D\setminus D_{i_1}$ intersects the disc of radius $\epsilon$ centered at $\xi$.
Hence, there exist  
$i_2\ne i_1$, and a sequence $(\xi_n)_{n\in\N}$ in $D_{i_2}\setminus D_{i_1}$ converging to $\xi$.
Now, either $\xi_n\in \operatorname{int}D_{i_2}$ for infinitely many $n$ or $\xi_n\not\in \operatorname{int}D_{i_2}$ for $n$ large enough. In the first case $\xi\in \overline{\operatorname{int}D_{i_2}}$, leading to a contradiction. In the second case, we repeat the reasoning leading to $(\xi_n)_{n\in\N}$, replacing $D_{i_1}$ by $D_{i_2}$ and $\xi$ by $\xi_n$ for every $n\in \N$, and, by a diagonal argument, we obtain 
$i_3\ne i_1,i_2$, and a sequence $(\hat \xi_n)_{n\in\N}$ in $D_{i_3}\setminus(D_{i_1}\cup D_{i_2})$ converging to $\xi$.
Repeating the argument finitely many times,
we end up with $i_{N}\in \{1,\dots,N\}$ and a sequence $(\tilde \xi_n)_{n\in\N}$ in $D_{i_N}\setminus(\cup_{j=1}^{N-1}D_{i_j})$ converging to $\xi$ with $D=D_{i_1}\cup \dots \cup D_{i_{N}}$. Since $D_{i_N}\setminus(\cup_{j=1}^{N-1}D_{i_j})=D\setminus(\cup_{j=1}^{N-1}D_{i_j})$ is open, we deduce that $\xi\in  \overline{\operatorname{int}D_{i_N}}$. This concludes the contradiction argument, proving \eqref{eq:cover}.

Let $v_i$ and  $v_j$ be linearly independent.
We claim that
\begin{equation}\label{eq:claimint}
\overline{\operatorname{int}(D_i)}\cap \overline{\operatorname{int}(D_j)}
=\emptyset.
\end{equation}

Consider the vector field $X'$ on $D\times \R$ defined by $X'(\xi,z)=(e_i,e_{i,1}f_x(\xi)+e_{i,2}f_y(\xi))$. Then $X'$ is $\C^1$ and both $X'$ and $X_j$ are tangent to $\partial E\cap A$ in a neighborhood of any point of $S_j$, where
\[S_k=\{(\xi,f(\xi)): \xi\in
\operatorname{int} (D_k)
\},\quad k=1,\dots,N.
\]
Hence $[X',X_j]\in T_\xi(\partial E\cap A)$ for every $\xi\in S_j$.
On the other hand,  $X'$
coincides with $X_i$ on  $S_i\times \R$, and therefore $[X',X_j]=[X_i,X_j]=c_{ij} Z$ on $S_i$, with $c_{ij}\in \R\setminus\{0\}$.
Assume by contradiction that $\overline{S}_i\cap \overline{S}_j$ contains at least one point $\xi$.
By continuity of $[X',X_j]$, we deduce from the above reasoning that $Z(\xi)\in T_\xi(\partial E\cap A)$.
The contradiction comes from the remark that, by definition of $S_i$ and $S_j$, also $X_i(\xi)$ and $X_j(\xi)$ are in $T_\xi(\partial E\cap A)$.
We proved \eqref{eq:claimint}.

We deduce from \eqref{eq:cover} and \eqref{eq:claimint} that $\{\overline{\operatorname{int}(D_1)},\dots, \overline{\operatorname{int}(D_N)}\}$ is an open disjoint cover of $\overline{D}$.
We conclude by connectedness of $D$.
\end{proof}

\section{Integration of the curvature equation}\label{s:integ}

Throughout this section  $\n$ is a norm of class   $\C^2$, unless explicitly mentioned other\-wise.

Let $A\subset \H^1$ be open and $g\in \C^{1,1}(A)$ be such that
$\nabla g(p)\ne 0$ for every $p$ in
$\Sigma=\{p\in A : g(p)=0\}$. 
The projected horizontal gradient $G:A\to\R^2$ introduced in \eqref{egge}
is Lipschitz continuous.
Assume that $\Sigma$ has no characteristic points, that is, $G(p)\ne 0$ for every $p\in \Sigma$. 
We use the coordinates  $G=(a,b)$ with $a,b\in \mathrm{Lip}(A)$ and we consider 
  $G^\perp=(-b,a)$.
  The horizontal vector field  $\mathcal G ^\perp =-bX+aY$  is tangent to $\Sigma$.

\begin{definition}\label{def:Legendre}
 A curve $\gamma\in \C^1( I;\Sigma) $ is said to be a \emph{Legendre curve} of $\Sigma$ if $\dot\gamma(t) = \mathcal G^\perp (\gamma(t))$ for all $t\in I$. 
\end{definition}

\noindent
In coordinates, a curve $\gamma=(\xi,z)$ in $\Sigma$ 
is a Legendre curve if and only if  
\begin{equation*}
\dot \xi =G^\perp(\gamma)\quad\textrm{and}\quad 
\dot z=\omega (\xi, \dot\xi)  .
\end{equation*}
Since $\mathcal G^\perp$ is Lipschitz continuous, the graph $\Sigma$ is foliated by Legendre curves: 
for any $p\in \Sigma$ there exists a unique (maximal) Legendre curve passing through $p$.

Consider now the case where $\Sigma$ is a $z$-graph with graph function 
$f\in \C^{1,1}(D)$, where $D$ is an open subset of $\R^2$. 
Then $G(\xi,f(\xi))=F(\xi)$, where $F$ is defined as in  \eqref{effe}, and a Legendre curve $\gamma=(\xi,z)$ satisfies
\begin{equation}\label{eq:legendre}
\dot \xi =F^\perp(\xi)\quad\textrm{and}\quad 
\dot z=\omega (\xi, \dot\xi)  .
\end{equation}

The domain $D$ is foliated by integral curves of $F^\perp$.
On $D$ we define the vector field $\mathcal N \in \mathrm{Lip}(D;\R^2)$ by
\begin{equation}\label{eq:cN}
\mathcal N (\xi) =  \X (\xi) = \nabla \n (F(\xi)),\quad \xi\in D.
\end{equation}
We know that $\phi(\mathcal N)=1$, by \eqref{l:norm1}.
We may call $\mathcal N$ the $\phi$-\emph{normal} to the foliation of $D$ by integral curves of $F^\perp$.
We denote by $H_\phi =\mathrm{div}(\cN)$ the divergence of $\mathcal N$.

\begin{theorem}\label{thm:CMCfol}
Let $\phi^*$ be 
of class $\C^2$. Let $\Sigma$ be the $z$-graph of a function  $f\in  \C^{2}(D)$ with $\cC(f)=\emptyset$.
Then any  Legendre curve $\gamma\in \C^1(I;\Sigma)$, with $\gamma = (\xi, z)$,  satisfies  
\begin{equation}\label{eq:dotNperp}
\frac{d}{dt}\cN(\xi)= H_\phi(\xi)  \dot \xi\quad 
\textrm{and}
\quad
\dot z=\omega(\xi,\dot\xi).
\end{equation}
 \end{theorem}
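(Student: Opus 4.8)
The plan is to reduce the statement to a short linear-algebra computation resting on the $1$-homogeneity of $\phi^*$. The second identity, $\dot z=\omega(\xi,\dot\xi)$, is merely the defining condition for $\gamma=(\xi,z)$ to be a Legendre curve, so it holds by hypothesis and needs no proof. Everything is therefore concentrated on the first identity.

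First I would differentiate $t\mapsto\cN(\xi(t))=\nabla\phi^*(F(\xi(t)))$ by the chain rule. Since $\gamma$ is a Legendre curve we have $\dot\xi=F^\perp$ by \eqref{eq:legendre}, so
\begin{equation*}
\frac{d}{dt}\cN(\xi)=D\cN(\xi)\,\dot\xi=D\cN(\xi)\,F^\perp(\xi),
\end{equation*}
where $D\cN$ denotes the Jacobian of $\xi\mapsto\cN(\xi)$. Writing $\cN=\nabla\phi^*\circ F$ with $F(\xi)=\nabla f(\xi)-\tfrac12\xi^\perp$ as in \eqref{effe} and \eqref{eq:cN}, the chain rule gives $D\cN=\nabla^2\phi^*(F)\,DF$, with $DF=\nabla^2 f-\tfrac12 J$ and $J$ the matrix of the perp operator. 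Here $\nabla^2\phi^*(F)$ is well defined and continuous: indeed $\cC(f)=\emptyset$ forces $F\ne 0$, and $\phi^*$ is of class $\C^2$ away from the origin, while $f\in\C^2(D)$ makes $DF$ continuous.

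The crux is the structure of $A:=\nabla^2\phi^*(F)$. Since $\phi^*$ is positively $1$-homogeneous, $\nabla\phi^*$ is $0$-homogeneous, and differentiating $\nabla\phi^*(tF)=\nabla\phi^*(F)$ in $t$ at $t=1$ yields $AF=0$. Thus $A$ is a symmetric $2\times2$ matrix with $F$ in its kernel, so its image is contained in the line $\R F^\perp$. Consequently the image of $D\cN=A\,DF$ is contained in $\R F^\perp$, and in particular $\tfrac{d}{dt}\cN(\xi)=D\cN\,F^\perp$ is parallel to $F^\perp=\dot\xi$. This settles the direction of $\dot{\cN}$; only the scalar factor remains to be identified.

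The main (elementary) obstacle is exactly this identification. The key observation is that a $2\times2$ matrix whose image lies on a line $\ell$ acts on every vector of $\ell$ as multiplication by its trace: in a basis completing a nonzero vector of $\ell$ the matrix is lower triangular, its surviving diagonal entry is the eigenvalue along $\ell$, and this equals the trace since the other eigenvalue vanishes ($\det=0$). Applying this with $\ell=\R F^\perp$ to $D\cN$ gives $D\cN\,F^\perp=\mathrm{tr}(D\cN)\,F^\perp$, and since $\mathrm{tr}(D\cN)=\mathrm{div}(\cN)=H_\phi$ by \eqref{eq:curvz}, we conclude $\tfrac{d}{dt}\cN(\xi)=H_\phi(\xi)\,F^\perp(\xi)=H_\phi(\xi)\,\dot\xi$, as claimed. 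As a check, one may run the computation in the orthonormal frame $\{F/|F|,F^\perp/|F|\}$, where $A$ is diagonal with kernel $\R F$ and the antisymmetric term $-\tfrac12 J$ drops out against $AF=0$, making the trace identity fully explicit.
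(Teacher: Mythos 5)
Your proof is correct and follows essentially the same route as the paper's: chain rule for $\cN=\nabla\phi^*\circ F$ along the Legendre curve, Euler's identity for the $0$-homogeneous $\nabla\phi^*$ to annihilate the $F$-direction in the Hessian, and identification of the remaining scalar factor with $\mathrm{div}(\cN)=H_\phi$. The only difference is cosmetic: where the paper substitutes the Euler relations into an explicit coordinate expansion of $\mathcal H\phi^*(F)\,JF\,\dot\xi$, you reach the same conclusion coordinate-free via the (correct) observation that a $2\times 2$ matrix whose image lies on a line acts on that line as multiplication by its trace.
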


\begin{proof}
The second equality in \eqref{eq:dotNperp} is part of the definition of a Legendre curve.
We prove the first equality.

We identify $\cN(\xi)$ and  
$\dot \xi =F^\perp(\xi )$ with column vectors
and we denote   by $Jg$ the Jacobian matrix of a differentiable mapping  $g$.
By the chain rule, using the coordinates $F =(a,b)$ and  $\dot \xi = ( -b(\xi), a(\xi))$ we obtain
\begin{equation}\label{eq:rperp'}
\begin{split}
&\frac{d}{dt}\cN (\xi )=
{\mathcal H \n}
( F(\xi)  ) JF (\xi) \dot \xi  
=\begin{pmatrix}
-b a_x  \n_{{aa}}
 - b b_x \n_{{ab}}
+a a_y \n_{{aa}}
+a b_y \n_{{ab}}
\\
-b a_x \n_{{ab}}
-b b_x \n_{{bb}}
+a  a_y \n_{{ab}}
+ a  b_y\n_{{bb}}
\end{pmatrix},
\end{split}
\end{equation}
where $\mathcal H\phi^*$ is the Hessian matrix of $\phi^*$ and the second order derivatives of $\phi^*$ are evaluated at $F(\xi)$.
Since $\n$ is of class $\C^2$, we identified $\n_{{ab}}=\n_{{ba}}$.
By Euler's homogeneous function theorem, since $\nabla\n$ is $0$-positively homogeneous 
there holds $\langle \nabla\n_a(F),F\rangle=0$ and $\langle \nabla\n_b(F),F\rangle=0$. These  formulas read
\[
a\n_{{aa}} +b \n_{{ab}}=0\quad\textrm{and}\quad
a \n_{{ab}}+b\n_{{bb}}=0.
\]
Plugging these relations into \eqref{eq:rperp'}, we obtain
\begin{equation}\label{eq:rperp'1}
\frac{d}{dt}\cN(\xi)=
\left(a_x \n_{{aa}}
+b_x \n_{{ab}}
+a_y  \n_{{ab}} 
+b_y  \n_{{bb}} \right)\dot \xi .
\end{equation}

On the other hand, we have 
\[
\operatorname{div}(\cN)= \operatorname{div}(\X)=a_x \n_{{aa}}
+b_x \n_{{ab}}
+a_y \n_{{ab}} 
+b_y \n_{{bb}},
\]
so that \eqref{eq:rperp'1} yields the claim.
\end{proof}

 {

\begin{remark}\label{rem:x-graphs2}
An analogue of Theorem~\ref{thm:CMCfol} holds true 
for 
$x$-graphs.
Let $\Sigma$ be a  $x$-graph $\Sigma$ without  characteristic points
and with defining function   $g(x,y,z)=f(y,z)-x$ for some $f$ of class $\C^2$.
Let $\gamma\in \C^1(I;\Sigma)$ be  a Legendre curve with coordinates $\gamma(t) = ( f(\zeta(t)), \zeta(t))$ for $t\in I$ and consider the vector   $\mathcal N(y,z)=\nabla \n (G(y,z)  )$.
Following the same steps as in the proof of Theorem~\ref{thm:CMCfol}, one gets 
\[
\frac{d}{dt} {\mathcal N}(\zeta )=\mathcal L f (\zeta) G^\perp (\zeta)
\quad \textrm {on }  I.
\]
 Hence, the conclusion of Theorem~\ref{thm:CMCfol} holds with 
 $H_\phi=\mathrm{div}\big (\X\big) $ replaced by the quantity  $\mathcal L f$ defined  in Remark~\ref{rem:x-graphs}.
Notice that $H_\phi$ and $\mathcal L f $ coincide on surfaces that are both $x$-graphs
and $z$-graphs.

An analogous remark can be made for $y$-graphs.

\end{remark}

}

\begin{corollary}\label{l:conto} Let $\phi^*$ be 
of class $\C^2$.
Let $\Sigma$ be the $z$-graph
of a function $f\in \C^2(D)$ with $\cC(f) =\emptyset$.
If $\Sigma$ has constant $\phi$-curvature $ {h}\neq 0$ then it is foliated by Legendre curves
that are horizontal lift of $\phi$-circles in $D$ with radius $1/|{h}|$, 
followed in clockwise  sense if ${h}>0$ and in 
 anti-clockwise sense if  ${h}<0$.
\end{corollary}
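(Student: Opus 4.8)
The plan is to exploit Theorem~\ref{thm:CMCfol}, which already tells us that along any Legendre curve $\gamma=(\xi,z)$ the $\phi$-normal $\cN(\xi)=\nabla\n(F(\xi))$ evolves according to $\frac{d}{dt}\cN(\xi)=H_\phi(\xi)\dot\xi = h\,\dot\xi$, using that the $\phi$-curvature is the constant $h\ne 0$ on $D$ (recall $\cC(f)=\emptyset$). Since $\dot\xi=F^\perp(\xi)$, this says that the planar projection $\xi(t)$ satisfies the coupled first-order system
\begin{equation*}
\dot\xi = F^\perp,\qquad \frac{d}{dt}\nabla\n(F) = h\,\dot\xi = h\,F^\perp .
\end{equation*}
The first step is therefore to read off a closed ODE for the curve $\xi$ by understanding the map $F\mapsto\nabla\n(F)$. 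Because $\phi^*$ is a norm of class $\C^2$, on the set where $F\ne 0$ the gradient $\nabla\n$ restricted to the unit $\phi^*$-sphere is a diffeomorphism onto the unit $\phi$-sphere $\{\phi=1\}$ (this is the standard duality $\nabla\phi^*\circ\nabla\phi=\mathrm{id}$), so the evolution of $\cN=\nabla\n(F)$ determines $F$ and hence $\dot\xi=F^\perp$.

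The heart of the computation is to recognize the resulting plane curve as a $\phi$-circle. I would integrate the relation $\frac{d}{dt}\cN(\xi)=h\,\dot\xi$ directly: since $h$ is constant, this gives $\cN(\xi(t)) = h\,\xi(t) + c$ for a constant vector $c\in\R^2$, i.e.
\begin{equation*}
\nabla\n(F(\xi(t))) = h\,\xi(t)+c .
\end{equation*}
Writing $\eta = \xi + c/h$ (a mere translation of the center) this becomes $\nabla\n(F) = h\,\eta$, whence $F = \nabla\phi(h\eta)=\operatorname{sgn}(h)\,\nabla\phi(\eta)$ by $0$-homogeneity of $\nabla\phi$ and the duality inversion formula. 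Recalling from \eqref{effe} that $F=\nabla f - \tfrac12\xi^\perp$ and that $\dot\xi = F^\perp$, the curve then obeys $\dot\xi = \operatorname{sgn}(h)\,(\nabla\phi(\eta))^\perp$. The key observation is that the vector field $\eta\mapsto(\nabla\phi(\eta))^\perp$ is exactly tangent to the level sets $\{\phi(\eta)=\mathrm{const}\}$, because $\langle(\nabla\phi(\eta))^\perp,\nabla\phi(\eta)\rangle=0$, so $\frac{d}{dt}\phi(\eta)=\langle\nabla\phi(\eta),\dot\eta\rangle=0$ and $\phi(\eta)$ is conserved along the motion. Thus $\xi(t)$ traces a level set of $\phi$ centered at $-c/h$, that is, a $\phi$-circle.

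It remains to pin down the \emph{radius} and the \emph{orientation}. For the radius I would use the normalization $\phi(\cN)=1$ from \eqref{l:norm1}: since $\cN=h\,\eta$ along the curve, $1=\phi(\cN)=|h|\,\phi(\eta)$, giving $\phi(\eta)=1/|h|$, i.e. the $\phi$-circle has radius $1/|h|$. For the orientation, the sign of $h$ enters through $\dot\xi=\operatorname{sgn}(h)(\nabla\phi(\eta))^\perp$: the perp-operator $\perp$ rotates by $+\pi/2$, so $(\nabla\phi(\eta))^\perp$ points in the counterclockwise tangential direction along $\{\phi=\mathrm{const}\}$; the factor $\operatorname{sgn}(h)$ then yields counterclockwise traversal when $h<0$ and clockwise when $h>0$, matching the statement. \textbf{The main obstacle} I anticipate is handling the duality inversion cleanly: one must justify that $\nabla\n$ sends $F$ to $h\eta$ invertibly and that $F=\operatorname{sgn}(h)\nabla\phi(\eta)$, which requires the $\C^2$ regularity of \emph{both} $\phi$ and $\phi^*$ and the strict convexity implied by $\phi^*\in\C^2$ (so that $\nabla\n$ is injective on the sphere). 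Once this inversion is in place, the conservation-law argument is immediate and the radius/orientation bookkeeping is routine.
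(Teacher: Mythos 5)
Your proposal follows essentially the same route as the paper's proof: apply Theorem~\ref{thm:CMCfol} with $H_\phi\equiv h$, integrate $\tfrac{d}{dt}\cN(\xi)=h\dot\xi$ along a Legendre curve to get $\cN(\xi(t))=h(\xi(t)-\xi_0)$, and use $\phi(\cN)=1$ to identify the trace of $\xi$ as a $\phi$-circle of radius $1/|h|$. Two remarks. First, the Legendre-duality detour (inverting $\nabla\n$ to write $F=\operatorname{sgn}(h)\nabla\phi(\eta)$ and deriving a conservation law for $\phi(\eta)$) is superfluous, and it is the only place where you invoke hypotheses the corollary does not grant: $\phi^*\in\C^2$ does not imply that $\phi$ is differentiable, nor that $\nabla\n$ is injective on the unit sphere --- that would require strict convexity of $\phi^*$, which $\C^2$ regularity alone does not give. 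The paper sidesteps all of this: from $\cN=h(\xi-\xi_0)$ and homogeneity one gets $|h|\,\phi(\xi-\xi_0)=\phi(\cN)=1$ directly, which is exactly the "radius" step you also perform; so what you flag as the main obstacle is not needed at all. Second, your orientation bookkeeping is internally inconsistent: from your own premises, $\dot\xi$ is a positive multiple of $\operatorname{sgn}(h)(\nabla\phi(\eta))^\perp$ and $(\nabla\phi(\eta))^\perp$ is the \emph{counterclockwise} tangent, which yields counterclockwise traversal for $h>0$, whereas your final sentence asserts clockwise. Your stated conclusion coincides with the one in the corollary (the paper's own justification, via $\langle\cN,F\rangle>0$ and \eqref{eq:dotNperp}, is equally terse on this point), but the sign flip in your last step is unjustified as written and should be rechecked.
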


\begin{proof} Having constant $\phi$-curvature ${h}$ means that 
\[
  \mathrm{div}(\cN)=  \mathrm{div}(\X) = {h}\quad\textrm{in } D. 
\]
By Theorem \ref{thm:CMCfol}, for any Legendre curve $\gamma =(\xi,z)$ we have  
\[
\frac{d}{dt}\cN(\xi)- H(\xi)  \dot \xi=0.
\]
We may than integrate this equation and deduce that there exists $\xi_0\in\R^2$ such that along $\xi$ we have
 \begin{equation}
 \label{eq:NP}
\cN (\xi  )-{h}\xi =-{h}\xi_0.
\end{equation}
From
 \eqref{l:norm1} and \eqref{eq:cN} we conclude that
\[
|{h}|  \phi (\xi-\xi_0) = \phi( {h}(\xi-\xi_0) ) =\phi(\cN)=1. 
\]
 
Finally,  notice that $\langle \cN(\xi),F(\xi)\rangle>0$ if $F(\xi)\ne 0$, so that $t\mapsto F(\xi(t))$ rotates clockwise if ${h}>0$  and anti-clockwise if ${h}<0$, according to \eqref{eq:dotNperp}. Hence, $t\mapsto F(\xi(t))^\perp$ and $t\mapsto \xi(t)$  also
rotate clockwise if  ${h}>0$, and anti-clockwise if ${h}<0$. 
\end{proof}

{

Let us discuss an extension of Corollary~\ref{l:conto}
to the case
in which we replace the assumption that $\phi^*$ is $\C^2$ by the weaker assumption that $\phi^*$ is \emph{piecewise $\C^2$}, in the following sense: there exist $k\in \N$ and $A_1,\dots,A_k\in \R^2$ such that $\phi^*$ is $\C^2$ on $\R^2\setminus \cup_{j=1}^k {\rm span}(A_j)$. 

A relevant case where this assumption holds true is when  $\phi$ is the $\ell^{p}$ norm 
\[\ell^{p}(x,y)=(|x|^p+|y|^p)^{\frac{1}{p}},\qquad x,y\in\R,\]
with $p>2$. Indeed, the  dual norm $(\ell^{p})^*$ coincides with the norm $\ell^q$, with $q=p/(p-1)<2$, which is 
$\C^2$ out of the coordinate axes, but not
on the whole punctured plane $\R^2\setminus\{0\}$.
We can prove the following.

\begin{corollary}
\label{rem:pieceC2}
Let $\phi^*$ be piecewise $\C^2$.
Let $\Sigma$ be the $z$-graph
of a function $f\in \C^2(D)$ with $\cC(f) =\emptyset$.
If $\Sigma$ has constant $\phi$-curvature ${h}\neq 0$ then it is foliated by Legendre curves
that are horizontal lifts of $\phi$-circles in $D$ with radius $1/|{h}|$,   followed in clockwise sense if ${h}>0$ and
in anti-clockwise sense if   ${h}<0$. 
\end{corollary}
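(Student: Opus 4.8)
The plan is to run the argument of Corollary~\ref{l:conto} on the open set of ``good'' parameters, where $F$ avoids the finitely many non-smoothness directions of $\phi^*$, and then to propagate the conclusion across the exceptional parameters by a continuity argument. Fix a maximal Legendre curve $\gamma=(\xi,z)$ on an interval $I$; since $\cC(f)=\emptyset$ we have $F(\xi(t))\neq 0$ for every $t$. Set $\Lambda=\bigcup_{j=1}^{k}\mathrm{span}(A_j)$ and introduce the continuous map $\eta(t)=\cN(\xi(t))-h\,\xi(t)$, which is well defined because $\phi^*\in\C^1$ forces $\cN=\nabla\phi^*(F)$ to be continuous. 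I claim that $\eta$ is constant on $I$; granting this, \eqref{l:norm1} and \eqref{eq:cN} give $|h|\,\phi(\xi-\xi_0)=\phi(\cN)=1$ with $\xi_0=-\eta/h$, exactly as in Corollary~\ref{l:conto}, and the orientation follows from the sign of $h$ as there.

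On the open set $I\setminus T$, where $T=\{t\in I: F(\xi(t))\in\Lambda\}$, the norm $\phi^*$ is $\C^2$ near $F(\xi(t))$, so Theorem~\ref{thm:CMCfol} applies verbatim and yields $\frac{d}{dt}\cN(\xi)=H_\phi\,\dot\xi=h\,\dot\xi$; hence $\eta$ is locally constant on $I\setminus T$. The whole difficulty is therefore to show that $T$ is discrete, for then continuity of $\eta$ forces the locally constant values on the two sides of each isolated point of $T$ to agree, and $\eta$ is constant on all of $I$.

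To prove that $T$ is discrete I would analyse the rotation of $F$. The angular velocity $\dot\theta$ of $t\mapsto F(\xi(t))$ equals $\det(F,\tfrac{d}{dt}F(\xi))/|F|^{2}=(F^\perp)^{\top}\mathrm{Hess}(f)\,F^\perp/|F|^{2}$, which depends only on $f\in\C^2$ and is thus continuous on all of $I$. On $I\setminus T$ the computation behind Theorem~\ref{thm:CMCfol} gives $H_\phi=\mu\,\dot\theta$, where $\mu>0$ is the nonzero eigenvalue of the positive semidefinite Hessian $\mathcal H\phi^*(F)$ (with eigenvector $F^\perp$); since $H_\phi=h\neq 0$ there, $\dot\theta=h/\mu$ has constant sign, so $\theta$ is monotone on $I$. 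It then remains to exclude that $F$ lingers on a fixed line of $\Lambda$ over a whole subinterval $J$. Such lingering would make $\cN=\nabla\phi^*(sA_j)$ constant on $J$ (by $0$-homogeneity of $\nabla\phi^*$) and $\dot\xi=F^\perp$ parallel to the fixed direction $A_j^\perp$, so $\xi$ would trace a straight segment; were this to happen on a two-dimensional family of data it would force $H_\phi=\mathrm{div}(\cN)=0\neq h$, a contradiction. The delicate case is that of a single segment-leaf, which I would rule out using that the dual circle $C_{\phi^*}(0,1)$ is convex, hence of finite total turning: the curvature $\mu$ blows up at the directions of $\Lambda$ only integrably, so a leaf cannot rest on a bad direction but reaches and transversally crosses each line of $\Lambda$ in finite time. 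This is the main obstacle, and it is precisely the point where the piecewise-$\C^2$ hypothesis (as opposed to mere continuity of $\nabla\phi^*$) is used.

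With $T$ discrete, $\eta$ is continuous and locally constant off $T$, hence constant on $I$. This proves that each Legendre curve projects to a $\phi$-circle of radius $1/|h|$, with the orientation dictated by the sign of $h$; since the Legendre curves foliate $D$, the asserted foliation of $\Sigma$ by horizontal lifts of $\phi$-circles follows.
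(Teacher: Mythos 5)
Your overall strategy (split the time interval into ``good'' times, where $F$ avoids the bad lines and Theorem~\ref{thm:CMCfol} applies, and ``bad'' times; integrate on the good components; glue; exclude bad intervals) is the same as the paper's, but the two steps you flag as delicate are exactly where your argument has gaps. First, your gluing mechanism is the continuity of $\eta=\cN-h\xi$, which you justify by asserting that $\phi^*\in\C^1$. That is not part of the hypothesis: ``piecewise $\C^2$'' only asks $\phi^*$ to be $\C^2$ off finitely many lines, and $\nabla\phi^*$ may jump across those lines (as it does for a crystalline dual norm), so $\cN=\nabla\phi^*(F)$ need not be continuous at the bad times and $\eta$ need not glue. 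The paper avoids this entirely: once the bad set is known to be discrete, the arcs of $\phi$-circles on adjacent good components are matched using only that the planar projection $\xi$ is $\C^1$ (it is an integral curve of the $\C^1$ field $F^\perp$), together with the rigidity fact that an oriented arc of $\phi$-circle of prescribed radius is determined by one point and its tangent line there. No regularity of $\nabla\phi^*$ on the bad lines is needed.

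Second, and more seriously, the step you yourself call ``the main obstacle'' --- excluding that $F\circ\xi$ stays on a single line $\mathrm{span}(A_j)$ over a whole subinterval --- is not closed by your sketch. The monotonicity $\dot\theta=h/\mu$ holds only off the bad set, and the ``finite total turning / integrable blow-up of $\mu$'' argument at best shows that $F$ \emph{reaches} a bad direction in finite time; it says nothing about whether it \emph{leaves} it, since on the bad set $\dot\theta=\langle\mathcal{H}f\,F^\perp,F^\perp\rangle/|F|^2$ is governed by $f$ alone and may well vanish on an interval. The missing idea is the paper's perturbation argument: if one Legendre leaf lingers on a bad line over $[t_0,t_1]$, take a $\C^1$ transversal $\kappa(s)$ through $\gamma(t_0)$ and the Legendre curves $\gamma_s$ with $\gamma_s(t_0)=\kappa(s)$; by continuous dependence, $F\circ\gamma_s\to F\circ\gamma$ uniformly, and since any arc of a $\phi$-circle of radius $1/|h|$ joining two of the finitely many bad lines has length bounded below, for small $|s|$ the curve $\gamma_s$ cannot contain any such arc on $(t_0+\varepsilon,t_1-\varepsilon)$. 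Hence a whole two-dimensional region of $\Sigma$ has $F$ of constant direction, so $\cN$ is constant there and $\operatorname{div}(\cN)=0\neq h$, a contradiction. In other words, the ``single segment-leaf'' case you could not handle reduces to the two-dimensional-family case you already knew how to contradict; without this reduction the proof is incomplete.
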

\begin{proof}
Under the assumptions of the corollary, 
the projected horizontal gradient is 
$\C^1$ on $D$ and 
Legendre curves can be introduced as in Definition~\ref{def:Legendre}.
 
Consider any Legendre curve $\gamma=(\xi,z)$ on $\Sigma$. 
Let us denote by $I\subset \R$ the maximal interval of definition of $\gamma$ and by $J$ the open subset of $I$ defined as follows: $t\in J$ if and only if $F(\xi(t))$ is in the region where $\phi^*$ is $\C^2$.
For the restriction of $\gamma$ to a connected component $J_0$ of $J$, 
Theorem~\ref{thm:CMCfol} can be recovered. In particular, 
since 
$\Sigma$ has constant $\phi$-curvature ${h}\ne 0$, then $\gamma|_{J_0}$ is the lift of a $\phi$-circle of radius $1/|{h}|$, followed 
clockwise or anti-clockwise 
depending on the sign of ${h}$.  
 If $t\in I\setminus J$, then  $F(\xi(t))$ belongs to one of the lines ${\rm span}(A_1),\dots,{\rm span}(A_k)$  on which $\phi^*$ may lose the $\C^2$ regularity. 
Notice that the restriction of $\xi$ to a connected component of $J$ compactly contained in $I$ follows an arc of 
 $\phi$-circle connecting two lines of the type 
 ${\rm span}(A_j)$. In particular, it cannot have arbitrarily small length. 

If $I\setminus J$ is made of isolated points,  
then $\gamma:I\to \Sigma$ is the lift of a $\phi$-circle of radius $1/|{h}|$. Indeed, an 
arc of $\phi$-circle of prescribed radius followed in a prescribed sense is only determined by its initial point and its tangent line there.
Since $\gamma$ is an arbitrary Legendre curve on $\Sigma$, the proof is complete if show 
that $I\setminus J$ does not contain intervals of positive length.

Assume by contradiction that $[t_0,t_1]$ is contained in $J$ with $t_0<t_1$. Then $F(\xi(t))$ is constantly equal to some $A\in \R^2$  for $t\in [t_0,t_1]$. 
Let $\delta>0$ and $\kappa:(-\delta,\delta)\to \Sigma$ be a $C^1$ curve such that $\kappa(0)=\gamma(t_0)$ and $\kappa'(0)$ is not proportional to $\gamma'(t_0)$. 
Write $\kappa(s)=(\xi_s,z_s)$ and notice that  $F(\xi_s)$ converges to $A$ as $s\to 0$.
Consider for each $s\in (-\delta,\delta)$ the Legendre curve $\gamma_s$ such that $\gamma_s(t_0)=\kappa(s)$. Then $\gamma_s$  converges to $\gamma$ and $F\circ \gamma_s$  converges to $F\circ \gamma$, uniformly on $[t_0,t_1]$,  as $s\to 0$. 
Hence, for $\varepsilon>0$ and $|s|$ small enough, the restriction 
of $\gamma_s$ to $(t_0+\varepsilon,t_1-\varepsilon)$
cannot contain the lift of any arc of $\phi$-circle of radius $1/|{h}|$. This implies that there exists a nonempty open region of $\Sigma$ 
of the form $\{\gamma_s(t) : t\in(t_0+\varepsilon,t_1-\varepsilon),\;|s|<\bar \delta\}$  
on which $F(\xi)=A$, contradicting 
the assumption that $\Sigma$ has constant nonzero  $\phi$-curvature.
\end{proof}
}

 \section{Foliation property with geodesics}\label{s:PMP}

In this section we prove that the Legendre foliation of a surface (a $z$-graph) with constant $\phi$-curvature  consists of length minimizing curves in the ambient space (geodesics)   relative to the norm  $\phi^\dag$ in $\R^2$  defined by  
\[
\phi ^\dag (\xi)  = \phi^*(\xi^\perp),\quad \xi\in\R^2. 
\]

We consider a general norm $\nB$ in $\R^2$ and we introduce the notation $\{\psi\le1\}=\{\xi\in\R^2 : \psi(\xi)\le 1\}$. For $T\geq0 $ we introduce  the class of curves
\[
\mathcal A_ T=\big\{ \gamma =(\xi,z) \in {\rm AC}([0,T];\H^1) :  \textrm{$\dot z =\omega(\xi,\dot\xi)$   and $\psi( \dot \xi) \leq 1$ a.e.}\big\},
\]
where $\omega$ is the symplectic form introduced in \eqref{eq:omega}.
In the sequel, we denote by 
$u=\dot \xi \in L^1([0,T];\{\psi\le 1\})$
the \emph{control} of $\gamma$.
For given points $p_0,p_1\in \H^1$ we consider the \emph{optimal time problem}
\begin{equation}
\label{eq:TOC} 
   \inf \big \{
    T\geq 0  : 
     \textrm{ there exists $\gamma\in\mathcal A_T$ 
     such that $\gamma(0)=p_0$ and $\gamma(T) =p_1$}
\}. 
\end{equation}
We call a curve $\gamma$ realizing the minimum in \eqref{eq:TOC} a \emph{$\nB$-time minimizer} between $p_0$ and $p_1$. In this case, we call the pair $(\gamma, u)$ with $u=\dot\xi$ an \emph{optimal pair.} A $\psi$-time minimizer is always parameterized by $\nB$-arclength, i.e., $\nB(u) =1$.  So, 
$\nB$-time minimizers are  $\nB$-length minimizers
parameterized by $\nB$-arclength.

An optimal pair $(\gamma,u)$ satisfies the necessary conditions given by
Pontryagin's Maximum Principle.  
As observed in \cite{Ber94}, 
it necessarily is a \emph{normal extremal}, whose definition is recalled below.  
The Hamiltonian associated with the optimal time problem 
\eqref{eq:TOC} is 
${\mathfrak H}:\H^1\times \R^3\times\{\psi\leq 1\}\to\R$
\[
\begin{split}
{\mathfrak H}(p,\lambda,u) & =\left(\lambda_x-\frac{y}{2}\lambda_z\right)u_1+\left(\lambda_y+\frac{x}{2}\lambda_z\right)u_2
 = \langle \lambda_\xi +\frac 12 \lambda _z \xi^\perp, u \rangle,
 \end{split}
\] 
where $\lambda = (\lambda_\xi,\lambda_z)\in\R^2\times\R$.

\begin{definition}
The pair 
$(\gamma, u)\in {\rm AC}([0,T];\H^1)\times 
L^1([0,T];\{\psi\le 1\})$
is a \emph{normal 
extremal} if there exists a nowhere vanishing curve $\lambda \in {\rm AC}([0,T];\R^3)$ such that  $(\gamma,\lambda)$ solves a.e.~the Hamiltonian system 
\begin{equation*}\label{eq:sys:1}
\begin{cases}
\dot \gamma = {\mathfrak H}_\lambda (\gamma,\lambda,u) 
\\
\dot\lambda = - {\mathfrak H} _ p (\gamma, \lambda, u),
\end{cases} 
\end{equation*}
and for every $t\in [0,T]  $ we have
\begin{equation}\label{eq:Hmaxx}
1={\mathfrak H}(\gamma(t),\lambda(t) ,u(t))=\max_{\nB(u)\leq 1}{\mathfrak H}(\gamma(t),\lambda(t),u).
\end{equation}
\end{definition}

In the coordinates $\gamma =(\xi,z)$ and $\lambda =(\lambda_\xi,\lambda_z)$, the Hamiltonian system reads
\begin{equation}\label{eq:sys}
\begin{cases}
\dot \xi =u,\\
\dot z=\omega( \xi, u) ,
\end{cases}
\begin{cases}
\dot \lambda_\xi =\frac 12  \lambda_ z u^\perp ,\\
\dot \lambda_z=0.
\end{cases}
\end{equation}

\begin{theorem}\label{l:PMP}
Let  $\nB$   be of class $\C^1$ and let $\gamma = (\xi,z)  \in {\rm AC}([0,T];\H^1)$ be  a horizontal curve. 
The following statements (i) and (ii) are equivalent: \begin{itemize}
\item[(i)] $\gamma$ is a local $\nB$-length minimizer parametrized by $\nB$-arclength;
\item[(ii)] the pair $(\gamma, u)$ with $u=\dot\xi$ is a normal 
extremal. 
\end{itemize}

\noindent 
Moreover, if $\nB$ is of class $\C^2$ then each of  (i)  and (ii) is equivalent to 
\begin{itemize}
\item[(iii)] $\gamma$ is of class $\C^2$ and parameterized by $\nB$-arclength, and there is  $\lambda_0 \in\R$ such that
\begin{equation}\label{eq:dotN}
\mathcal H \psi( \dot \xi)\ddot \xi  
=\lambda_0 \dot  \xi ^\perp,
\end{equation}
where $\mathcal H \psi$ is the Hessian matrix of $\psi$.
\end{itemize}
\end{theorem}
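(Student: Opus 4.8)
The plan is to establish the chain of equivalences by treating (i)$\Leftrightarrow$(ii) via the Pontryagin Maximum Principle combined with the local optimality of normal extremals, and then, under the extra assumption $\psi\in\C^2$, to reduce (ii)$\Leftrightarrow$(iii) to a direct differentiation of the maximality condition. The unifying observation, which I would isolate first, is that writing $w=\lambda_\xi+\tfrac12\lambda_z\xi^\perp$, the maximality condition \eqref{eq:Hmaxx} together with the $\psi$-arclength normalization $\psi(u)=1$ is equivalent to the pointwise identity
\begin{equation*}
w=\nabla\psi(u).
\end{equation*}
Indeed, $\max_{\psi(v)\le1}\langle w,v\rangle=\psi^*(w)$ by definition of the dual norm, and for $w\neq0$ the maximizer $u$ on $\{\psi=1\}$ satisfies the Lagrange condition $w=\mu\nabla\psi(u)$; pairing with $u$ and using Euler's identity $\langle\nabla\psi(u),u\rangle=\psi(u)=1$ forces $\mu=\psi^*(w)=1$. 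Conversely, if $w=\nabla\psi(u)$ with $\psi(u)=1$ then $\psi^*(w)=1$, and the Fenchel--Young inequality $\langle w,v\rangle\le\psi^*(w)\psi(v)$ shows $u$ is a maximizer; here $w\neq0$ is guaranteed since $\mathfrak H=1$.

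For (i)$\Rightarrow$(ii): a $\psi$-time minimizer solves the optimal time problem \eqref{eq:TOC}, so the Pontryagin Maximum Principle yields an adjoint curve $\lambda$ satisfying the Hamiltonian system \eqref{eq:sys} and the maximality \eqref{eq:Hmaxx}. As recalled following \cite{Ber94}, the contact nature of the Heisenberg distribution rules out nontrivial abnormal extremals, so every minimizer is normal, which is exactly (ii). For (ii)$\Rightarrow$(i), I would invoke the general principle that normal extremals are local length minimizers: the projection of an integral curve of the maximized Hamiltonian is locally optimal, a fact established by a field-of-extremals / Hamilton--Jacobi calibration argument, or adapted from the sub-Finsler references cited in the introduction. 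This converse carries the genuine analytic content of the first equivalence.

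Turning to (ii)$\Leftrightarrow$(iii) under $\psi\in\C^2$: for (ii)$\Rightarrow$(iii) I would first upgrade regularity. Since $\lambda$ and $\gamma$ are absolutely continuous, $w$ is absolutely continuous with $\dot w=\dot\lambda_\xi+\tfrac12\lambda_z\dot\xi^\perp=\lambda_z u^\perp$ by \eqref{eq:sys} and $\dot\lambda_z=0$; inverting $w=\nabla\psi(u)$ then bootstraps $\gamma$ to class $\C^2$. Differentiating $w=\nabla\psi(u)$ with $u=\dot\xi$ gives $\mathcal H\psi(\dot\xi)\ddot\xi=\dot w=\lambda_z\dot\xi^\perp$, which is \eqref{eq:dotN} with $\lambda_0=\lambda_z$. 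For (iii)$\Rightarrow$(ii), given a $\C^2$ curve parametrized by $\psi$-arclength satisfying \eqref{eq:dotN}, I set $\lambda_z:=\lambda_0$ (constant) and $\lambda_\xi:=\nabla\psi(\dot\xi)-\tfrac12\lambda_0\xi^\perp$, so that $w=\nabla\psi(u)$ and the maximality holds by the reformulation above; differentiating and substituting \eqref{eq:dotN} recovers $\dot\lambda_\xi=\tfrac12\lambda_0 u^\perp$, i.e.\ the Hamiltonian system, while $\lambda$ is nowhere vanishing because $\nabla\psi(\dot\xi)\neq0$ when $\lambda_0=0$ and $\lambda_z\neq0$ otherwise.

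The main obstacle I anticipate is the sufficiency direction (ii)$\Rightarrow$(i): the Maximum Principle provides only necessary conditions, so proving that normal extremals are genuinely locally minimizing rather than merely critical requires constructing a calibration or exploiting the Hamiltonian flow, and this is the delicate step. A secondary technical point is the bootstrap of $\gamma$ to $\C^2$ in (ii)$\Rightarrow$(iii), which relies on the invertibility of $\nabla\psi$ on the unit sphere and demands care where the Hessian $\mathcal H\psi$ degenerates.
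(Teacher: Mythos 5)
Your proposal is correct and follows essentially the same route as the paper: the core of the argument is exactly the reformulation of the maximality condition as $\lambda_\xi+\tfrac12\lambda_z\xi^\perp=\nabla\psi(u)$, which is then differentiated (resp.\ integrated back) to pass between (ii) and (iii). The one step you flag as delicate, the sufficiency (ii)$\Rightarrow$(i), is not proved in the paper at all but is outsourced entirely to \cite[Theorem~1]{Ber94}, so no calibration argument is needed; your extra care about the $\C^2$ bootstrap and the invertibility of $\nabla\psi$ on the unit circle is, if anything, more scrupulous than the paper's treatment.
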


\begin{proof} The equivalence between (i) and (ii) is \cite[Theorem~1]{Ber94}.
 
Let us show that (ii) implies (iii). We set   
\begin{equation}
\label{eq:rl}
{\mathcal M}(t)=\lambda_\xi(t) +\frac 12 \lambda_z (t)\xi(t)^\perp, \quad t\in [0,T],
\end{equation} 
where $\lambda = (\lambda_\xi,\lambda_z)$ is the curve given by the definition of extremal.
Then the maximality condition in  \eqref{eq:Hmaxx} for normal extremals reads
\begin{equation}\label{eq:Hmax}
1=\langle{\mathcal M}(t),u(t)\rangle=\max_{\nB(u)\leq 1}\left\langle {\mathcal M}(t),u\right\rangle=\nB^*({\mathcal M}(t)).
\end{equation}
This is equivalent to 
the identity
\begin{equation}\label{eq:N}
{\mathcal M}(t)=\nabla\nB(u(t)).
\end{equation}

When $\psi$
is of class $\C^2$, from \eqref{eq:N}, \eqref{eq:rl}, and \eqref{eq:sys}
we obtain the differential equation for $u=\dot\xi$
\begin{equation} \label{EMME}
 \mathcal  H \psi (u)  \dot u = \dot {\mathcal M }= \dot\lambda_\xi +\frac 12 \dot \lambda_z  \xi +\frac12  \lambda _z u^\perp  =\lambda_z u^\perp.
\end{equation}
 This is \eqref{eq:dotN} with $\lambda_0: = \lambda_z$.

 Now we show that (ii) is implied by (iii). Consistently with \eqref{eq:N}, we define $
\mathcal M (t) = \nabla \psi(u(t))$, for $t\in [0,T]$.
Then
 $\nB^*({\mathcal M})=1$.

 We define the curve $\lambda = (\lambda_\xi, \lambda_z)$ letting $\lambda_z =\lambda_0$ and
 $
  \lambda_\xi = \mathcal M -\frac 12 \lambda _z \xi^\perp$.
When $\psi$ is of class $\C^2$, we obtain
\[
 \dot\lambda _\xi =\ \dot{\mathcal M} -\frac 12 \lambda_z \dot\xi^\perp = \mathcal H \psi(\dot \xi)  \ddot\xi -\frac 12 \lambda_z \dot\xi^\perp =\frac 12 \lambda_z u^\perp.
\]
Hence, all equations in \eqref{eq:sys} are satisfied, showing that the pair $(\gamma, u)$ is a normal extremal.
 This proves that (iii) implies (ii).
\end{proof}

\begin{remark} \label{PALLO}
When $\lambda_0\ne 0$,  equation \eqref{eq:dotN} can be integrated in the following way. Using \eqref{EMME}, the equation  is equivalent to
$
  \dot{\mathcal M} = \lambda_0 \dot \xi^\perp ,
$
that implies $\mathcal M = \lambda _0 (\xi^\perp -\xi_0^\perp)$ for some constant $\xi_0\in\R^2$. So from \eqref{eq:Hmax} we deduce that
$
| \lambda_0| \psi^*(\xi^\perp -\xi_0^\perp) = 1.
$
If we choose $\psi = \phi^\dag$ then we have $\psi^*(\xi^\perp) =\phi(\xi)$. So 
the previous equation becomes the equation for a $\phi$-circle
\[
 \phi(\xi-\xi_0) =  {1}/{|\lambda_0|}.
\]

\end{remark}

\begin{corollary}\label{c:isop-lm} Let $\phi$ be a norm with dual norm $\n$  of class  piecewise  $\C^2$ and let $f\in \C^2(D)$  be such that $\cC(f) =\emptyset$.
If  
$\mathrm{gr}(f)$ has constant $\phi$-curvature, 
then it is foliated by geodesics of $\H^1$ relative to the  norm $\phi^\dag$. 
\end{corollary}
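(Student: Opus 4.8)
The plan is to show that every leaf of the Legendre foliation of $\mathrm{gr}(f)$ is a $\phi^\dag$-geodesic by verifying the characterization of $\psi$-length minimizers in Theorem~\ref{l:PMP} with $\psi=\phi^\dag$. I would first treat the case in which $\n$ is of class $\C^2$ on all of $\R^2\setminus\{0\}$ and then recover the piecewise $\C^2$ case by the same patching used in the proof of Corollary~\ref{rem:pieceC2}. Two preliminary facts drive the argument. The first, already exploited in Remark~\ref{PALLO}, is that $\psi=\phi^\dag$ satisfies $\psi^*(\xi^\perp)=\phi(\xi)$. The second is the chain-rule identity
\begin{equation*}
\nabla\phi^\dag(\dot\xi)=\cN^\perp,\qquad \cN=\nabla\n(F),
\end{equation*}
which follows from $\phi^\dag(\eta)=\n(\eta^\perp)$, the chain rule, the oddness of $\nabla\n$ (as $\n$ is even), and the Legendre identity $\dot\xi=F^\perp$ from \eqref{eq:legendre}.

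The main step is then a short computation. Fix a Legendre curve $\gamma=(\xi,z)$ of $\mathrm{gr}(f)$. Since $\mathrm{gr}(f)$ has constant $\phi$-curvature $h$, Theorem~\ref{thm:CMCfol} gives $\tfrac{d}{dt}\cN(\xi)=h\,\dot\xi$ along $\gamma$, whence, taking perpendiculars and using the identity above,
\begin{equation*}
\frac{d}{dt}\,\nabla\phi^\dag(\dot\xi)=\frac{d}{dt}\,\cN^\perp=h\,\dot\xi^\perp.
\end{equation*}
Because $\cC(f)=\emptyset$ we have $F\neq0$ and hence $\dot\xi=F^\perp\neq0$, so I may reparametrize $\gamma$ by $\phi^\dag$-arclength; denote the new parameter by $s$, with $ds/dt=\phi^\dag(\dot\xi)=\n(F)>0$. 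Writing the reparametrized curve still as $(\xi,z)$ and primes for $d/ds$, the $0$-homogeneity of $\nabla\phi^\dag$ yields $\nabla\phi^\dag(\xi')=\nabla\phi^\dag(\dot\xi)$, so the displayed identity rescales to $\tfrac{d}{ds}\nabla\phi^\dag(\xi')=h\,(\xi')^\perp$, that is,
\begin{equation*}
\mathcal H\phi^\dag(\xi')\,\xi''=h\,(\xi')^\perp.
\end{equation*}
This is exactly \eqref{eq:dotN} with $\psi=\phi^\dag$ and $\lambda_0=h$; moreover $\xi'$ is $\phi^\dag$-arclength-parametrized, horizontality $z'=\omega(\xi,\xi')$ is preserved, and $(\xi,z)$ is of class $\C^2$ since $f$ and $\n$ are. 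Hence condition (iii) of Theorem~\ref{l:PMP} holds and $\gamma$ is a local $\phi^\dag$-length minimizer; as this property is invariant under reparametrization, the original Legendre curve is a $\phi^\dag$-geodesic. The argument treats all $h$ uniformly: for $h\neq0$ one recovers, through Corollary~\ref{l:conto} and Remark~\ref{PALLO}, the horizontal lifts of $\phi$-circles of radius $1/|h|$, while for $h=0$ it yields horizontal lines, both being $\phi^\dag$-geodesics.

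For the piecewise $\C^2$ case I would run the above verbatim on each maximal subinterval on which $F(\xi)$ stays in the region where $\n$ is $\C^2$, exactly as in Corollary~\ref{rem:pieceC2}: there each leaf is a $\phi^\dag$-geodesic, and since its planar projection is a single $\phi$-circle (or line) followed with a fixed orientation while its $z$-component is fixed by horizontality, the geodesic property extends across the finitely many parameters at which $F(\xi)$ meets a singular direction, by comparison with the $\phi^\dag$-geodesic issued from the same initial data, which by Remark~\ref{PALLO} projects onto the same $\phi$-circle. Alternatively, one may verify the normal-extremal conditions (ii) of Theorem~\ref{l:PMP} directly, which require only $\phi^\dag\in\C^1$: setting $\lambda_z=h$ and $\lambda_\xi=\nabla\phi^\dag(\xi')-\tfrac12 h\,\xi^\perp$, the identity $\tfrac{d}{ds}\nabla\phi^\dag(\xi')=h\,(\xi')^\perp$ gives $\dot\lambda_\xi=\tfrac12 h\,(\xi')^\perp$, and the maximality condition \eqref{eq:Hmaxx} follows from Euler's identity together with the $\phi^\dag$-arclength parametrization. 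The main obstacle I anticipate is precisely this low regularity at the singular directions of $\n$: one must ensure that $s\mapsto\nabla\phi^\dag(\xi'(s))$ is absolutely continuous and that the curve does not dwell on a singular direction on a set of positive measure, which is the content of the dwelling-time analysis already carried out in the proof of Corollary~\ref{rem:pieceC2}. By contrast, the mismatch between the Legendre parametrization and the arclength parametrization is harmless, being absorbed by the $0$-homogeneity of $\nabla\phi^\dag$.
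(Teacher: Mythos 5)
Your proposal is correct, and it reorganizes the argument in a way that differs from the paper's. The paper's own proof is a one-line combination of Corollary~\ref{rem:pieceC2}, Remark~\ref{PALLO}, and Theorem~\ref{l:PMP}: it identifies the Legendre leaves as horizontal lifts of $\phi$-circles of radius $1/|h|$, separately observes that the normal extremals with $\lambda_0\neq 0$ are exactly the lifts of $\phi$-circles of radius $1/|\lambda_0|$, and concludes by matching the two families. You instead verify the geodesic characterization (iii) of Theorem~\ref{l:PMP} directly on each leaf, via the identity $\nabla\phi^\dag(\dot\xi)=\cN^\perp$ (equivalently $\nabla\phi^\dag(v)=-\nabla\phi^*(v^\perp)^\perp$, which the paper itself invokes in the proof of Proposition~\ref{p:constant}) combined with Theorem~\ref{thm:CMCfol}; the computation $\tfrac{d}{dt}\nabla\phi^\dag(\dot\xi)=\tfrac{d}{dt}\cN^\perp=h\,\dot\xi^\perp$ and the subsequent rescaling are correct. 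This short-circuits the ``both families are lifts of $\phi$-circles'' matching, makes the reparametrization from the Legendre parameter to $\phi^\dag$-arclength explicit (a point the paper glosses over, since $\phi^\dag(\dot\xi)=\phi^*(F)$ need not equal $1$), and treats $h=0$ on the same footing as $h\neq 0$, whereas Corollary~\ref{rem:pieceC2} and Remark~\ref{PALLO} both assume non-vanishing curvature. For the piecewise-$\C^2$ case you fall back on the paper's route (the dwelling-time analysis of Corollary~\ref{rem:pieceC2} together with Remark~\ref{PALLO} and Theorem~\ref{l:PMP}), which is essentially unavoidable since the pointwise ODE is unavailable where $\nabla\phi^*$ fails to be $\C^2$; your remark that the remaining issue is absolute continuity of $s\mapsto\nabla\phi^\dag(\xi'(s))$ across the singular directions correctly locates the only delicate point, and it is resolved by the same argument as in the paper. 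In short, the two proofs rest on the same three ingredients, but yours is more self-contained and slightly more general in the regular case.
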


The proof is Corollary~\ref{rem:pieceC2},  
combined with Remark \ref{PALLO} and Theorem \ref{l:PMP}.

\section{Characteristic set of $\phi$-critical surfaces}\label{s:char}

In this section we study the characteristic set of  $\phi$-critical surfaces (see Definition~\ref{def:zphicurv}) and then apply the results to $\phi$-isoperimetric sets. For a $\C^2$ surface $\Sigma\subset\H^1$, the characteristic set is
\begin{equation}
\cC(\Sigma)=\{p \in \partial E : T_p \Sigma = \cD(p)\}.
\end{equation}
Note that any $\C^2$ surface $\Sigma\subset\H^1$ is a $z$-graph around any of its characteristic points $p\in\cC(\Sigma)$.

When $\Sigma$ is oriented, the $\phi$-curvature $H_\phi$ of $\Sigma$ can be defined in a globally coherent way. In particular, when  $\Sigma$ is a $z$-graph at the point $p=(\xi, z)=(x,y,z)\in\Sigma$, the $\phi$-curvature at $p\in \Sigma\setminus \cC(\Sigma)$ is defined through \eqref{eq:curvz}, by letting $H_\phi(p)  = \mathrm
{div}(\X)(\xi)$ where $f$ is a $z$-graph function.
When
$\Sigma$ is a $x$-graph, we let $H_\phi(p) = \mathcal L f(y,z)$, where now $f$ is a $x$-graph
function and $\mathcal L f$ is defined in \eqref{eq:Delta}; when $\Sigma$ is a $y$-graph we proceed analogously. 

We say that $\Sigma$ is \emph{$\phi$-critical} if it is closed, has constant $\phi$-curvature and it is $\phi$-critical in the sense of \eqref{eq:daverif} in a neighborhood of any characteristic point.

Our goal is to  prove  Theorem~\ref{thmi:char}. The proof is obtained combining   Lemma~\ref{l:char1} and Theorem~\ref{thm:ccurves} below.

In this section, $\phi$ and $\phi^*$ are two norms of class $\C^2$.
We will omit to mention this assumptions in the various statements.

\subsection{Qualitative structure of the characteristic set}

\begin{lemma}\label{l:char1} 
Let $\Sigma\subset \H^1$ be a $\C^2$ surface with constant $\phi$-curvature. 
Then $\cC(\Sigma)$ consists of isolated points and $\C^1$ curves.
Moreover, for every isolated point $p_0=(\xi_0,z_0)\in \cC(\Sigma)$ and every $f$ such that $p_0\in \mathrm{gr}(f)\subset \Sigma$, 
we have $\operatorname{rank}(JF(\xi_0))=2$, {where $F$ is the
 projected horizontal gradient introduced in \eqref{effe}}.
\end{lemma}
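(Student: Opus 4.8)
The plan is to work entirely in local graph coordinates. Since every $\C^2$ surface is a $z$-graph around each of its characteristic points, near any $p_0=(\xi_0,z_0)\in\cC(\Sigma)$ I would write $\Sigma=\mathrm{gr}(f)$ with $f\in\C^2(D)$ and identify $\cC(\Sigma)$ with $\cC(f)=\{F=0\}$, where $F=\nabla f-\frac12\xi^\perp\in\C^1(D;\R^2)$ is the projected horizontal gradient \eqref{effe} and $\cC(f)$ is as in \eqref{eq:charf}. The computation that drives everything is the identity for the Jacobian
\[
JF=\mathcal H f+\frac12\begin{pmatrix} 0 & 1 \\ -1 & 0\end{pmatrix},
\]
where $\mathcal H f$ is the (symmetric) Hessian of $f$ and the second matrix is the nonzero antisymmetric part. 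Since a symmetric and an antisymmetric matrix cannot cancel, $JF(\xi)\neq0$ for every $\xi\in D$, so at each characteristic point $\mathrm{rank}(JF)\in\{1,2\}$ and the degenerate case $\mathrm{rank}=0$ is impossible. A direct expansion also gives $\det JF=\det\mathcal H f+\frac14$, so the rank-$1$ locus is exactly $\{\det\mathcal H f=-\frac14\}$.

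The rank-$2$ alternative is immediate: if $\mathrm{rank}(JF(\xi_0))=2$, the inverse function theorem makes $F$ a local $\C^1$-diffeomorphism onto a neighborhood of $0$, so $\xi_0$ is the unique zero of $F$ nearby and $p_0$ is an isolated characteristic point. This already shows that a \emph{non-isolated} characteristic point must satisfy $\mathrm{rank}(JF(\xi_0))=1$.

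The heart of the argument, and what I expect to be the main obstacle, is the rank-$1$ case: I must show that $\cC(f)$ is then locally a $\C^1$ curve, equivalently that a rank-$1$ characteristic point is never isolated. Since $JF(\xi_0)\neq0$, I would pick a linear functional $\pi$ on the target with $\nabla(\pi\circ F)(\xi_0)\neq0$; by the implicit function theorem $\Gamma=\{\pi\circ F=0\}$ is a $\C^1$ curve through $\xi_0$, and $\cC(f)\subseteq\Gamma$ locally because $\cC(f)$ is the common zero set of $\pi\circ F$ and a complementary component $\rho\circ F$. Parametrizing $\Gamma$ by a $\C^1$ arc and writing $\beta$ for the restriction of $\rho\circ F$ to it, the characteristic set corresponds to $\{\beta=0\}$, and the claim reduces to showing that the constant-curvature hypothesis forces $\beta\equiv0$ near $\xi_0$, i.e. that $\Gamma$ itself is characteristic. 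This is genuinely the delicate point: for a merely $\C^1$ map $F$ the zero set of $\beta$ could a priori be an isolated point or even a Cantor set, and only the equation $\mathrm{div}(\nabla\phi^*(F))=h$ can exclude this. I would extract the required rigidity from the constant-curvature equation by passing to the limit along sequences approaching $\cC(f)$: since $\phi^*$ is $1$-homogeneous, $\mathcal H\phi^*(w)$ is $(-1)$-homogeneous and degenerates in the radial direction as $w\to0$, so the boundedness of $\langle\mathcal H\phi^*(F),\mathcal H f\rangle=h$ as $F\to0$ imposes a rigid relation between $\mathcal H f$ and the direction of $F$. Equivalently, I would invoke the foliation of $D\setminus\cC(f)$ by arcs of $\phi$-circles of radius $1/|h|$ from Corollary~\ref{l:conto} and let these arcs accumulate at $\xi_0$, showing that they can cluster on $\Gamma$ only if $\Gamma$ is a line of zeros of $F$. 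This step is the sub-Finsler adaptation of the singular-set analysis of \cite{RR08}.

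Finally I would assemble the dichotomy. By the rank-$2$ analysis every rank-$2$ characteristic point is isolated, and by the rank-$1$ analysis every rank-$1$ characteristic point lies on a $\C^1$ curve contained in $\cC(\Sigma)$; as these are the only possible ranks, $\cC(\Sigma)$ is a disjoint union of isolated points and $\C^1$ curves. In particular an isolated characteristic point can have neither rank $0$ (excluded algebraically) nor rank $1$ (it would then lie on a curve), so necessarily $\mathrm{rank}(JF(\xi_0))=2$, which is the last assertion. I expect the remaining bookkeeping — that the curve produced in the rank-$1$ case is genuinely $\C^1$ and independent of the chosen graph representation of $\Sigma$ — to be routine, the only substantial difficulty being the rigidity step that forces $\beta\equiv0$, where the constant-curvature hypothesis is indispensable.
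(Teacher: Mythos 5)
Your setup and the easy half of the argument coincide with the paper's: writing $JF=\mathcal H f+\tfrac12 R$ with $R$ a nonzero antisymmetric matrix rules out rank $0$, and the inverse function theorem disposes of the rank-$2$ case; the paper does exactly this. The genuine gap is the rank-$1$ case, which you correctly identify as the heart of the lemma but do not prove. You reduce it to showing that the restriction $\beta$ of a complementary component of $F$ to the curve $\Gamma=\{\pi\circ F=0\}$ vanishes identically, and then offer two candidate mechanisms --- the $(-1)$-homogeneous degeneration of $\mathcal H\phi^*(w)$ as $w\to0$, and the accumulation of the Legendre foliation by $\phi$-circles --- without carrying either out. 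Neither is routine. The identity $\langle \mathcal H\phi^*(F),\mathcal H f\rangle=h$ holds only off $\cC(f)$, and what it forces in the limit $F\to 0$ depends on the rate and direction in which $F$ vanishes, so no pointwise passage to the limit yields $\beta\equiv0$ without quantitative input you have not supplied; the foliation-accumulation route is essentially the Jacobi-field analysis of \cite{RR08}, which the paper deploys for Theorem~\ref{thm:ccurves} but \emph{not} here.

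What the paper actually does, following \cite{CHMY}, is a flux argument. It takes two unit vectors $a,b\notin\ker(JF(\xi_0))$, so that $\Gamma_a=\{\langle F,a\rangle=0\}$ and $\Gamma_b=\{\langle F,b\rangle=0\}$ are $\C^1$ curves through $\xi_0$ sharing the same normal $N$ there and satisfying $\cC(f)=\Gamma_a\cap\Gamma_b$ locally, and --- this is the nontrivial selection, resting on the surjectivity of $\nabla\phi^*:\mathbb S^1\to C_\phi$ --- chooses them so that $|\langle\nabla\phi^*(a^\perp),N\rangle|\neq|\langle\nabla\phi^*(b^\perp),N\rangle|$. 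If $\cC(f)$ were not a curve near $\xi_0$, the two curves would bound a nonempty thin lens $A$ of area $O(\delta^2\tan\alpha)$ on which $F\neq0$ and along whose two boundary arcs $\nabla\phi^*(F)$ is constant, equal to $\pm\nabla\phi^*(a^\perp)$ and $\pm\nabla\phi^*(b^\perp)$ respectively. Integrating $\operatorname{div}(\nabla\phi^*(F))=h$ over $A$ and sending the aperture $\alpha\to0^+$ forces $\langle\nabla\phi^*(b^\perp)-\nabla\phi^*(a^\perp),N\rangle=0$, contradicting the choice of $a,b$. This two-curve divergence-theorem mechanism is the actual content of the lemma and is absent from your proposal. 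A minor further imprecision: excluding isolated rank-$1$ points is not ``equivalent'' to showing $\cC(f)$ is locally a $\C^1$ curve there; one must also rule out zero sets of $\beta$ accumulating at $\xi_0$ without filling $\Gamma$, which you acknowledge but do not resolve.
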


\begin{proof} 
We let $\cC(f)$ be as in \eqref{eq:charf}. For any   $\xi_0 \in \cC(f)$, the Jacobian matrix $JF(\xi_0)$ has rank 1 or 2.  Indeed, an  explicit calculation shows that   $JF(\xi_0)\neq 0$ for all $\xi_0 \in D$. 
 If $\mathrm{rank}(JF(\xi_0))=2$ then $\xi_0$ is an isolated point of $\cC(f)$.

We study the case $\mathrm{rank}(JF(\xi_0))=1$.
We claim that in this case  $\cC(f)$ is a curve of class $\C^1$ in a neighborhood of $\xi_0$. The argument that we use here is inspired by \cite{CHMY}, see also Remark~\ref{rmk:cheng}.

    For $b\in \R^2$ we define 
  $F_b:D\to \R$, 
  $F_b =\langle F,b\rangle$.
      When $b\notin \operatorname{ker}(JF(\xi_0))$, 
      the equation $F_b=0$ defines a $\C^1$ curve $\Gamma_b$ near and through $\xi_0$. We have $\cC(f)\subset \Gamma_b$.
Since 
$\nabla F_b (\xi_0)$ is in the image of $JF(\xi_0)$, which is a line independent of $b$, 
     the normal direction to $\Gamma_b$ at $\xi_0$ does not depend on $b$. We choose one of the two unit  normals and we call it $N\in \R^2$.

  We claim that there exist $a,b\in \mathbb S^1$, where $\mathbb S^1=
   \{w\in\R^2 : |w|=1\}$, such that
\begin{equation}
          \label{nonno}
a\notin \{b,-b\},\quad a,b 
         \notin \operatorname{ker}(JF(\xi_0)),\quad          |\langle \nabla\phi^*(b^\perp ), N\rangle|\neq |\langle \nabla\phi^*(a^\perp), N\rangle|.
  \end{equation} 
  To prove the claim, pick $b\in\mathbb S^1\setminus \operatorname{ker}(JF(\xi_0))$ (this is possible since $\operatorname{rank}(JF(\xi_0))\neq0$), and define the set
\[
K_b:=\left\{v\in C_\phi: |\langle v, N\rangle|=|\langle \nabla\phi^*(b^\perp ), N\rangle|\right\}.
\]
Since the map $\nabla\phi^*:\mathbb S^1\to C_\phi$ is continuous, the set $(\nabla\phi^*)^{-1}(K_b)\subset \mathbb S^1$ is closed in $\mathbb S^1$.
Moreover, $\nabla\phi^*:\mathbb S^1\to C_\phi$ is surjective, 
since for every $w\in C_\phi$ and every $v$ in the subgradient of $\phi^*$ at $w$, we have $w=\nabla\phi^*(v)$ (see, e.g., \cite[Theorem 23.5]{Rockafellar}).
As a consequence, 
$(\nabla\phi^*)^{-1}(K_b)\neq  \mathbb S^1$, since otherwise
we would have $K_b=C_\phi$, which is impossible. The set 
\[\Upsilon=\operatorname{ker}(JF(\xi_0))^\perp\cup(\nabla\phi^*)^{-1}(K_b)\cup\{b^\perp,-b^\perp\}\]
 is therefore a proper closed subset of $\mathbb S^1$, 
and the claim follows by choosing $a^\perp\in\mathbb S^1\setminus \Upsilon$.

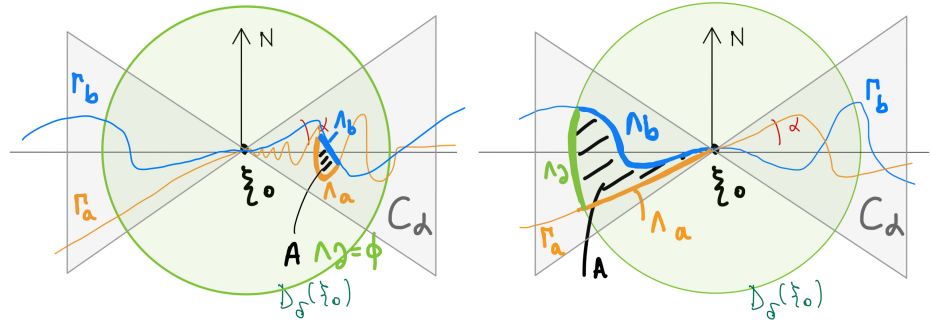
\begin{figure}
\begin{tikzpicture}[smooth cycle,scale=.8]
\filldraw[fill=green!15!white, draw=green!70!black, line width=.75pt]  (0,0) circle (25mm);
\draw (0,0) node[below] {$\xi_0$};
\filldraw[fill=green!15!white, draw=green!70!black, line width=.75pt]  (8,0) circle (25mm);
\draw (8,0) node[below] {$\xi_0$};
\node[green!60!black] at (0,-2) {$\{|\xi-\xi_0|<\delta\}$};
\node[green!60!black] at (8,-2)  {$\{|\xi-\xi_0|<\delta\}$};
\draw[gray] (-3,0) -- (3,0);
\draw[gray] (5,0) -- (11,0);
\draw [-stealth](0,0) -- (0,1.5) node[near end,right]{$N$};
\draw [-stealth](8,0) -- (8,1.5) node[near end,right]{$N$};
\filldraw[fill=gray!30!white,draw=gray,opacity=.4] (-2.8,-2) -- (-2.8,2) -- (2.8,-2) -- (2.8,2) -- cycle;
\filldraw[fill=gray!30!white,draw=gray,opacity=.4] (5.2,-2) -- (5.2,2) -- (10.8,-2) -- (10.8,2) -- cycle;
\node[gray] at (2.55,-1.5) {$C_\alpha$};
\node[gray] at (10.55,-1.5) {$C_\alpha$};
\draw[blue, line width=.75pt] plot [smooth, tension=1] coordinates {(-3,.3) (-2.5,.8) (-2,-.2) (-1,.5) (0,0) (1.5,.7) (2,-.3) (3,.8)} node[blue,above]{$\Gamma_b$};
\node[blue] at (1.3,1) {$\Lambda_b$};
\draw[orange, line width=.75pt] plot [smooth, tension=1] coordinates {(-3,-2) (-2.5,-1.5) (0,0) (1,-.45) (3,.4)};
\node[orange] at (-3.2,-1.5) {$\Gamma_a$};
\node[orange] at (4.8,-1.5) {$\Gamma_a$};
\node[orange] at (1.3,-.85)  {$\Lambda_a$};
\node at (1.2,.3)  {$A$};
\draw[black,thin] (.5,.2) -- (.8,-.45);
\draw[black,thin] (.8,.4) -- (1.1,-.45);
\draw[black,thin] (1.1,.6) -- (1.4,-.35);
\draw[black,thin] (1.4,.7) -- (1.7,-.25);
\draw[blue, line width=.75pt] plot [smooth, tension=1] coordinates {(5,.3) (5.5,.8) (6,-.2) (7,.5) (8,0) (9.5,.7) (10,-.3) (11,.8)}  node[blue,above]{$\Gamma_b$};
\node[blue] at (7,.8) {$\Lambda_b$};
\draw[orange, line width=.75pt] plot [smooth, tension=1] coordinates {(5,-2) (5.5,-1.5) (8,0) (9,-.45) (11,.4)};
\node[orange] at (7,-.9)  {$\Lambda_a$};
\node at (6,-.6)  {$A$};
\draw[black,thin] (5.7,.2) -- (6,-1.2);
\draw[black,thin] (6.1,-.22) -- (6.3,-.95);
\draw[black,thin] (6.4,0) -- (6.65,-.65);
\draw[black,thin] (6.7,.35) -- (7,-.45);
\draw[black,thin] (7.1,.4) -- (7.4,-.3);
\end{tikzpicture}
\caption{The cone $C_\alpha$ and the region $A$. On the left,
$A$ does not touch $\partial\{|\xi-\xi_0|<\delta\}$, while it does on the right.
We can always restrict our attention to the case on the left when $\xi_0$ is a density point of $\cC(f)$.}
\label{f:cono}
\end{figure}

Fix $a,b\in\mathbb S^1$ such that \eqref{nonno} holds and, for $\alpha\in(0,1)$, let
$C_\alpha:=\{v\in\R^2 : |\langle N,v\rangle|< |v|\sin\alpha\}$
be the cone centered at $\xi_0$ with axis parallel to $N^\perp$ and aperture $2\alpha$. 
Since $\Gamma_a,\Gamma_b$ are $\C^1$, there exists $\delta\in(0,1)$ such that
\begin{equation}\label{eq:cont}
\{\xi\in\Gamma_a\cup\Gamma_b : |\xi-\xi_0|<\delta\}\subset C_{\alpha,\delta},
\end{equation}
where we set $C_{\alpha,\delta}=\{\xi\in C_\alpha: |\xi-\xi_0|<\delta\}$.

Let us assume by contradiction that $\cC(f)$ is not a $\C^1$ curve near $\xi_0$.   Then there exists a nonempty connected component $A$ of $C_{\alpha,\delta}\setminus (\Gamma_a\cup \Gamma_b)$ such that, letting   
\[
\Lambda_a=\Gamma_a\cap \partial A,
\quad \Lambda_b:=\Gamma_b\cap \partial A, 
\quad \Lambda_\partial:=\partial \{|\xi-\xi_0|<\delta\}\cap \partial A,
\]
we have 
\begin{equation}\label{eq:hp}
\Lambda_a\neq \emptyset,\quad \Lambda_b\neq \emptyset,\quad \partial A=\Lambda_a\cup\Lambda_b\cup\Lambda_\partial, \quad  \sharp(\Lambda_a\cap\Lambda_b)\leq 2.
\end{equation}
See Figure~\ref{f:cono}. Notice that $A$, $\Lambda_a$, $\Lambda_b$, and $\Lambda_\partial$ depend on $\delta$.
By \eqref{eq:cont} (see also Figure~\ref{fig:conto_trig}), we have 
\begin{equation}\label{eq:area}
  \mathcal L^2(A) \leq \delta^2\tan(\alpha).
  \end{equation}

\begin{figure}
\begin{tikzpicture}[smooth cycle,scale=.8]
\filldraw[fill=green!15!white, draw=green!70!black, line width=.75pt]  (0,0) circle (25mm);
\draw (0,0) node[below] {$\xi_0$};
\node[green!60!black] at (0,-2) (lll) {$\{|\xi-\xi_0|<\delta\}$};
\draw[gray] (-3,0) -- (3,0);
\draw [-stealth](0,0) -- (0,1.5) node[near end,right]{$N$};
\filldraw[fill=gray!30!white,draw=gray,opacity=.4] (-2.8,-2) -- (-2.8,2) -- (2.8,-2) -- (2.8,2) -- cycle;
\node[gray] at (2.55,-1.5) (l) {$C_\alpha$};
\draw [stealth-stealth,red](0,-.2) -- (2.5,-.2) node[midway,below]{$\delta$};
\draw [stealth-stealth,red](2.5,0) -- (2.5,1.8) node[midway,right]{$\delta\tan\alpha$};
\draw[red]  (.8,0) arc(0:35:.8) node[midway,right]{$\alpha$};
\end{tikzpicture}
\caption{Proportions in $C_{\alpha,\delta}$.}
\label{fig:conto_trig}
\end{figure}
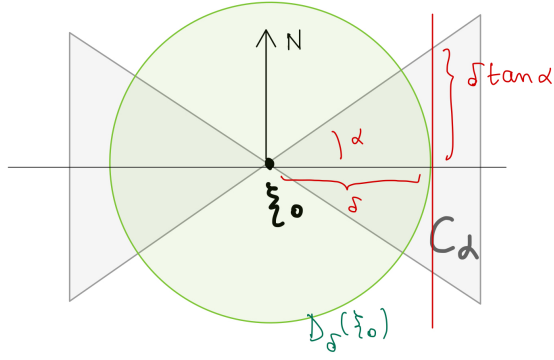

By \eqref{eq:hp} and since $\cC(f)\subset \Lambda_a\cap \Lambda_b$, for $\xi\in \operatorname{int}(\Lambda_a)\cup\operatorname{int}(\Lambda_b)$ we have $F(\xi)\neq 0$, where we endow $\Lambda_a$ and $\Lambda_b$ with their relative topologies. 
 We deduce that
 $F(\xi)=c_a(\xi)a^\perp$ with $c_a(\xi)\ne 0$
 for $\xi\in  \operatorname{int}(\Lambda_a)$
  and 
 $F(\xi)=c_b(\xi)b^\perp$ with $c_b(\xi)\ne 0$ for $\xi\in  \operatorname{int}(\Lambda_b)$.
Using the fact that $\nabla\phi^*$ is positively $0$-homogeneous it then follows that the vector field $\mathcal N: D\setminus \cC(f)\to \R^2$,   $\mathcal N (\xi)  = \nabla \phi^* (F(\xi) )$,
   is constant along $\Lambda_a$ and $\Lambda_b$. Namely,
   \[
   \begin{split}
   \mathcal N (\xi) & = \operatorname{sgn}(c_a) \nabla\phi^*(a^\perp)=:\mathcal N_a ,\quad \xi \in \mathrm{int}(\Lambda_a),
   \\
   \mathcal N (\xi) &=  \operatorname{sgn}(c_b) \nabla\phi^*(b^\perp)=:\mathcal N_b ,\quad \xi \in \mathrm{int}(\Lambda_b).
   \end{split}
   \]
   
 By assumption, and since 
 $\phi^*\in \C^2$, 
 there exists a constant $h \in\R$ such that 
 \begin{equation*}\label{eq:hh}
   \mathrm{div}(\mathcal N (\xi) ) = h ,
   \quad \xi\in D\setminus \cC(f),
 \end{equation*}
 in the strong sense.
 Then by the divergence theorem, and since $A\cap \cC(f) =\emptyset$, we have
 \begin{equation}\label{eq:hLA}
  h  \mathcal L^2(A) =\int_A\mathrm{div} (\mathcal N) dxdy 
  = \int _{\Lambda _a} \langle \mathcal N_a, N_a\rangle d \mathcal H^1 +
   \int _{\Lambda_b} \langle \mathcal N_b, N_b\rangle d \mathcal H^1
   +   \int _{\Lambda_\partial} \langle \mathcal N, N_\partial\rangle d \mathcal H^1,
 \end{equation}
 where $N_a$, $N_b$, and $N_\partial$ are, respectively, the normals to $\Lambda_a$, $\Lambda_b$, and $ \Lambda_\partial$, exterior with respect to $A$.
 For $\alpha\to0^+$ we have 
  \[
  \begin{split}
  & \int _{\Lambda _a}   N_a   \mathcal H^1  = \delta(-N +o(1)),
  \\
  & \int _{\Lambda _b}   N_b   \mathcal H^1  = \delta(N +o(1)),
  \\
&\left|\int _{\Lambda_\partial} \langle \mathcal N, N_\partial\rangle d \mathcal H^1\right|\le C\delta\alpha,
  \end{split}
   \]
   where $o(1)\to 0 $ as $ \alpha \to 0^+$ and $C>0$ denotes a suitable constant.
Now from \eqref{eq:area} we deduce that
\begin{equation}\label{eq:tana}
| \delta\tan(\alpha)  h  | \geq |\langle \mathcal N_b-\mathcal N _a, N\rangle + o(1)|-C\alpha,
    \end{equation}
    that implies  
    $ \langle \mathcal N_b-\mathcal N _a, N\rangle=0$
    in contradiction with \eqref{nonno}.
    
    This proves that $\cC(f)$ is a $\C^1$ curve around any point 
    $\xi_0$ with $\mathrm{rank}(JF(\xi_0))=1$.
\end{proof}

\begin{remark}\label{rmk:cheng}
The statement of Lemma~\ref{l:char1} still holds when in place of assuming $\Sigma$ to have constant $\phi$-curvature we assume that, around a characteristic point $p_0\in\cC(\Sigma)$, its $\phi$-curvature $H_\phi$ satisfies, for a constant $k> 0$,
\[
|\xi-\xi_0| |H_\phi(\xi)| \le k,\quad \xi\in D\setminus\cC(f)
\]
where $f\in\C^2(D)$ is a $z$-graph function so that $p_0=(\xi_0,f(\xi_0))$. 
This is the assumption of \cite[Theorem~3.3]{CHMY}.

In fact in this case, \eqref{eq:hLA} is replaced by
\[\begin{split}
\int _{\Lambda _a} \langle \mathcal N_a, N_a\rangle d \mathcal H^1 +
   \int _{\Lambda_b} \langle \mathcal N_b, N_b\rangle d \mathcal H^1
   +   \int _{\Lambda_\partial} \langle \mathcal N, N_\partial\rangle d \mathcal H^1
   &\le \int_{C_{\alpha,\delta}}\frac{k}{|\xi-\xi_0|} d\xi \leq 4k\alpha\delta
\end{split}
\]
and \eqref{eq:tana} is then replaced by 
\[
4k\alpha  \geq |\langle \mathcal N_b-\mathcal N _a, N\rangle + o(1)|-C\alpha,
\]  
where $o(1)\to 0 $ as $ \alpha \to 0^+$,
yielding the same conclusion.
\end{remark}

\subsection{Characteristic curves in $\phi$-critical surfaces}

Given a surface $\Sigma\subset\H^1$, we call a \emph{characteristic curve on $\Sigma$} any (nontrivial) curve $\Gamma\subset \cC(\Sigma)$. In this section we prove the following result.

\begin{theorem}\label{thm:ccurves}
Let 
 $\Sigma$ be a complete and  oriented surface of class $\C^2$. If $\Sigma$  {is $\phi$-critical with  non-vanishing $\phi$-curvature $h\neq 0$}  then any characteristic curve on $\Sigma$ is either a horizontal line or the horizontal lift of a simple closed curve.
\end{theorem}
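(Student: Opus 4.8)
The goal is to understand characteristic curves on a $\phi$-critical surface $\Sigma$ with nonzero $\phi$-curvature $h$. The first thing I would do is fix an arc $\Gamma\subset\cC(\Sigma)$ and, since $\Sigma$ is of class $\C^2$ and $\Gamma$ is contained in the characteristic set, represent $\Sigma$ locally as a $z$-graph $\mathrm{gr}(f)$ around $\Gamma$; this is legitimate because at characteristic points the tangent plane coincides with $\cD(p)$, which is transverse to the $z$-axis. Under this representation $\Gamma$ corresponds to a curve $\Gamma_0\subset \cC(f)=\{\xi : F(\xi)=0\}$, where $F=\nabla f-\tfrac12\xi^\perp$ is the projected horizontal gradient from \eqref{effe}. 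The key structural input from Lemma~\ref{l:char1} is that such a curve is of class $\C^1$ (it arises exactly at the points where $\operatorname{rank}(JF(\xi_0))=1$), so I can parametrize $\Gamma_0$ smoothly.

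\textbf{The main mechanism.} The heart of the matter is to exploit the constant-curvature equation together with the foliation of $D\setminus\cC(f)$ by integral curves of $F^\perp$ (Legendre curves), as developed in Section~\ref{s:integ}. Along $\Gamma_0$ we have $F=0$, so $\Gamma_0$ is a degenerate locus of the Legendre foliation; approaching $\Gamma_0$ from one side, the Legendre curves are, by Corollary~\ref{l:conto}, horizontal lifts of $\phi$-circles of radius $1/|h|$. The plan is to take a limit of these circular arcs as their starting points tend to $\Gamma_0$ and read off a differential constraint on $\Gamma_0$ itself. Concretely, I expect the following dichotomy to emerge. Since $F(\xi)=0$ means $\nabla f(\xi)=\tfrac12\xi^\perp$, differentiating along the $\C^1$ curve $\Gamma_0$ (using $f\in\C^2$) gives a relation between the tangent direction $\dot\xi$ of $\Gamma_0$ and the Hessian of $f$; combined with the constant-curvature PDE $\mathrm{div}(\nabla\phi^*(F))=h$ taken in the limit $F\to0$, this should force $\Gamma_0$ to satisfy a closed first-order ODE. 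The two solution regimes are when the limiting tangent direction is everywhere constant (yielding a straight line in the $\xi$-plane, hence a horizontal \emph{line} in $\H^1$ after lifting) versus when it genuinely rotates (yielding a closed planar curve whose horizontal lift is the characteristic curve).

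\textbf{Completing the classification.} Once the ODE for $\Gamma_0$ is in hand, I would argue that a complete solution is either an entire straight line or a simple closed curve. Completeness of $\Sigma$ is what guarantees that the characteristic curve cannot simply terminate: a maximal integral curve of the relevant ODE either extends to all of $\R$ as an unbounded line or closes up. To rule out non-simple (self-intersecting) closed curves and spiraling behavior, I would use that the curvature $h$ is a fixed nonzero constant, which pins the "angular speed" of the tangent direction and prevents accumulation; a self-intersection would force two distinct Legendre foliations to meet tangentially at a characteristic point in an incompatible way, contradicting the $\C^2$ regularity and orientability of $\Sigma$. Finally, the horizontal-lift structure is automatic because $\Gamma$ lies in $\cC(\Sigma)$: there $T_p\Sigma=\cD(p)$, so $\Gamma$ is a horizontal curve, and its projection $\Gamma_0$ determines it up to the $z$-coordinate via $\dot z=\omega(\xi,\dot\xi)$.

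\textbf{The main obstacle.} I expect the hardest part to be rigorously extracting the limiting ODE for $\Gamma_0$ as $F\to 0$, since $\nabla\phi^*$ is only $0$-homogeneous and therefore \emph{discontinuous} at the origin: the value of $\mathcal N=\nabla\phi^*(F)$ depends on the direction of approach $F/|F|$, not just on the fact that $F\to0$. Controlling this directional limit along the characteristic curve — and showing it is well-defined and compatible with the constant-curvature condition — is the delicate analytic step, and it is precisely where the rank-one information $\operatorname{rank}(JF)=1$ from Lemma~\ref{l:char1} should be used to identify the asymptotic direction of $F$ approaching $\Gamma_0$, thereby making the limit of $\mathcal N$ (and hence the governing ODE) unambiguous.
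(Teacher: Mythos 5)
Your high-level picture (Legendre foliation by lifts of $\phi$-circles of radius $1/|h|$ near the characteristic curve, then an ODE for the projected curve, then a line/closed-curve dichotomy) matches the paper's, but the central mechanism you propose for extracting the ODE is not the one that works, and I don't see how to make it work. You want to differentiate $F=0$ along $\Gamma_0$ and pass to the limit in the pointwise equation $\mathrm{div}(\nabla\phi^*(F))=h$ as $F\to 0$. Two problems. First, the limiting direction of $\mathcal N=\nabla\phi^*(F)$ along the characteristic curve is \emph{not} obtained from the rank-one information of Lemma~\ref{l:char1}; it is obtained from the $\phi$-\emph{criticality} in the weak sense: writing the distributional identity on a neighborhood split by $\Xi$ and applying the divergence theorem on each side forces the jump $\langle\cN^+-\cN^-,N_\Xi\rangle$ to integrate to zero against every test function, whence $\cN^\pm\in T\Xi$ (Proposition~\ref{p:orto}, with the existence of the one-sided limits from \cite[Proposition~3.5]{CHMY}). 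This is exactly the step you flag as "the main obstacle," and the tool you propose for it is the wrong one. Second, and more seriously, the governing ODE is intrinsically \emph{nonlocal}: in the paper it is \eqref{eq:gatto},
\[
\dot\tau\,\omega(\dot\mu(\tau),\mu(\tau)-\mu(\tau+h\bar s))=h\,\omega(\mu(\tau+h\bar s),\mu(\tau)),
\]
which couples $\Gamma$ to the \emph{next} characteristic curve through the characteristic time $\bar s$ (the parameter length of the Legendre arc before it hits $\cC(\Sigma)$ again). This equation is derived from the vanishing of the vertical component $\langle V,Z\rangle_{\mathcal D}$ of the Jacobi-type field $V=\gamma_t$ of the family of Legendre leaves at $s=\bar s$ (Lemma~\ref{l:contoVZ}), together with the conservation law $\langle V,Z\rangle_{\mathcal D}+h\langle\nabla\phi^\dagger(\xi_s),\xi_t\rangle=0$ along each leaf and the constancy of $t\mapsto s(t)$ (Propositions~\ref{p:constant} and \ref{p:sconstant}). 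No purely local computation at $\Gamma_0$ can see $\bar s$, so your proposed limit cannot produce the right equation; moreover the naive limit of the PDE is singular, since $\mathcal H\phi^*$ is $(-1)$-homogeneous and blows up as $F\to0$.

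The downstream classification also needs more than you supply. The line case corresponds precisely to $h\bar s=M/2$ (antipodal characteristic points on each $\phi$-circle), which forces $\dot\tau=0$; otherwise $\dot\tau=f(\tau)$ with $f$ nonvanishing and $M/2$-periodic, and the closedness of $\Xi$ follows from the half-period shift $\tau(t+T_0)=\tau(t)+M/2$ combined with the central symmetry of $C_\phi$, which makes $\int_t^{t+2T_0}\dot\Xi=\int\mu(\tau(\sigma))\,d\sigma$ vanish. Simplicity is then a monotonicity argument for $\sigma\mapsto\int\langle\mu(\tau(t)),v\rangle\,dt$, not the tangential-incompatibility heuristic you sketch. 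In short: the skeleton is right, but the key idea --- reading the characteristic ODE off the conjugate-point condition for the Jacobi field of the Legendre foliation, rather than off a local limit of the curvature PDE --- is missing, and without it the proof does not go through.
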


For a characteristic curve $\Gamma$ in $\Sigma$ we denote 
its coordinates by  $\Gamma=(\Xi,\zeta)\in\R^2\times\R$. For any $p_0=(\xi_0,z_0)$ on $ \Gamma$,  let $\delta>0$ be small enough to have 
\begin{equation}\label{eq:divi}
\{\xi\in\R^2 : |\xi-\xi_0|<\delta\}\setminus \mathrm 
{supp}(
\Xi)=B^+\cup B^-,
\end{equation} where $B^+,B^-\subset\R^2$ are disjoint open connected sets.
The $\phi$-normal $\cN$   in \eqref{eq:cN} is well-defined in $B^+\cup B^-$.

\begin{lemma}\label{p:cont}
Let $\Sigma$ be a $\C^2$ surface with
constant $\phi$-curvature. With the above notation,  
 the following limits exist
\begin{equation}\label{eq:limits}
\cN^\pm(\xi_0):=\lim_{{B^\pm\ni \xi}\to \xi_0} \cN(\xi)
\end{equation}
and satisfy $\cN^+(\xi_0)=-\cN^-(\xi_0)$.
\end{lemma}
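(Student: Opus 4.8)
The plan is to work locally near a fixed point $p_0=(\xi_0,z_0)\in\Gamma$ and to analyze the direction along which $F$ approaches $0$ at $\xi_0$, since $\cN=\nabla\phi^*(F)$ is $0$-homogeneous and therefore only feels the direction $F/|F|$. First I would invoke Lemma~\ref{l:char1}: because $\xi_0$ lies on a characteristic curve it is not an isolated point of $\cC(f)$, so near $\xi_0$ the set $\cC(f)$ is a regular $\C^1$ curve $\gamma$ and $\operatorname{rank}(JF(\xi_0))=1$ (rank-$2$ points being isolated, and $JF$ never vanishing on $D$). Since $F\equiv 0$ along $\gamma$, differentiating gives $JF(\gamma(\sigma))\,\gamma'(\sigma)=0$, so the tangent line to $\gamma$ coincides with $\ker JF$ at each point of the curve; in particular a unit normal $\nu(\sigma)$ to $\gamma$ is transverse to $\ker JF(\gamma(\sigma))$, whence $W(\sigma):=JF(\gamma(\sigma))\,\nu(\sigma)$ is a continuous, nowhere-vanishing vector lying in the (one-dimensional) image of $JF(\gamma(\sigma))$. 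Locally $\operatorname{supp}(\Xi)=\cC(f)=\gamma$, so the two components $B^+,B^-$ in \eqref{eq:divi} are exactly the two sides $\{\rho>0\}$ and $\{\rho<0\}$ of $\gamma$; relabelling if necessary, take $B^+=\{\rho>0\}$.

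Next I would pass to Fermi (tubular) coordinates $\xi=\gamma(\sigma)+\rho\,\nu(\sigma)$, valid in a small neighborhood of $\xi_0$ since $\gamma$ is $\C^1$, so that $\xi\to\xi_0$ within $B^\pm$ corresponds to $(\sigma,\rho)\to(0,0)$ with $\pm\rho>0$. Using $F(\gamma(\sigma))=0$ and $f\in\C^2$ (so $F$ is $\C^1$), I would write the integral representation
\[
F(\xi)=\int_0^{\rho} JF\big(\gamma(\sigma)+t\,\nu(\sigma)\big)\,\nu(\sigma)\,dt ,
\]
and divide by $\rho$. By continuity of $JF$, $\gamma$, $\nu$ one gets the uniform estimate
\[
\Big|\tfrac{1}{\rho}F(\xi)-W(0)\Big|\le \sup_{t\in[0,\rho]}\big|JF(\gamma(\sigma)+t\nu(\sigma))\nu(\sigma)-JF(\xi_0)\nu(0)\big|\;\longrightarrow\;0,
\]
so $F(\xi)/\rho\to W(0)=:W_0$ as $\xi\to\xi_0$ inside $B^+$. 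Since $W_0\neq 0$, this yields $F(\xi)/|F(\xi)|\to W_0/|W_0|$; approaching from $B^-$ flips the sign and gives $F(\xi)/|F(\xi)|\to -W_0/|W_0|$. This uniform-limit step is the crux of the argument: it shows that even for approaches nearly tangent to $\gamma$ the direction of $F$ still converges, the point being that $\nu(0)\notin\ker JF(\xi_0)$ forces $W_0\neq0$.

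Finally I would use $0$-homogeneity and the continuity of $\nabla\phi^*$ on $\R^2\setminus\{0\}$ (available since $\phi^*\in\C^2$) to conclude
\[
\cN^+(\xi_0)=\lim_{B^+\ni\xi\to\xi_0}\nabla\phi^*(F(\xi))=\nabla\phi^*\!\Big(\tfrac{W_0}{|W_0|}\Big)=\nabla\phi^*(W_0),
\]
so both limits in \eqref{eq:limits} exist, and symmetrically $\cN^-(\xi_0)=\nabla\phi^*(-W_0)$. The relation $\cN^+(\xi_0)=-\cN^-(\xi_0)$ then follows from the fact that $\phi^*$ is an even function, whose gradient is therefore odd: differentiating $\phi^*(-w)=\phi^*(w)$ gives $\nabla\phi^*(-w)=-\nabla\phi^*(w)$, hence $\cN^-(\xi_0)=\nabla\phi^*(-W_0)=-\nabla\phi^*(W_0)=-\cN^+(\xi_0)$. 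Note that the hypothesis of constant $\phi$-curvature enters only through Lemma~\ref{l:char1}, which guarantees the $\C^1$ curve structure used in the first step.
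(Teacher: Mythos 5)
Your proof is correct, but it is genuinely different from what the paper does: the paper disposes of Lemma~\ref{p:cont} in one line by citing Proposition~3.5 of \cite{CHMY}, whereas you reconstruct a self-contained argument (which is close in spirit to the one in \cite{CHMY}, adapted to the anisotropic normal $\cN=\nabla\phi^*(F)$). Your key chain of observations is sound: a point of a nontrivial characteristic curve cannot have $\operatorname{rank}(JF)=2$ (it would be isolated by Lemma~\ref{l:char1}), so the rank is $1$ there, $\cC(f)$ is locally a $\C^1$ curve whose tangent spans $\ker JF$, hence $W_0=JF(\xi_0)\nu(0)\neq 0$; the averaged identity $F(\xi)/\rho=\int_0^1 JF(\gamma(\sigma)+t\rho\,\nu(\sigma))\nu(\sigma)\,dt\to W_0$ then forces $F/|F|\to \pm W_0/|W_0|$ on the two sides, and $0$-homogeneity plus oddness of $\nabla\phi^*$ give existence of the limits and $\cN^+=-\cN^-$. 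The only point you gloss over is the appeal to ``Fermi coordinates'' for a curve that is merely $\C^1$: the tubular-neighborhood map $(\sigma,\rho)\mapsto\gamma(\sigma)+\rho\,\nu(\sigma)$ need not be a diffeomorphism (nor even injective) without $\C^{1,1}$ regularity. This is harmless for your argument, because you do not need a coordinate system, only that every $\xi\in B^\pm$ near $\xi_0$ admits \emph{some} representation $\xi=\gamma(\sigma)+\rho\,\nu(\sigma)$ with $(\sigma,\rho)\to(0,0)$ and with the sign of $\rho$ determined by the component; this follows by taking $\gamma(\sigma)$ to be a nearest point of the curve to $\xi$ (the first-order condition makes $\xi-\gamma(\sigma)$ normal, and the open segment from $\gamma(\sigma)$ to $\xi$ avoids the curve, fixing the sign), and since your limit $F(\xi)/\rho\to W_0$ is uniform in $(\sigma,\rho)$, non-uniqueness of the representation is irrelevant. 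With that patch stated, your proof is a complete and more elementary substitute for the external citation; what the paper's route buys is brevity, while yours makes explicit where the hypotheses (constant $\phi$-curvature via Lemma~\ref{l:char1}, and $\phi^*\in\C^2$ only through continuity, $0$-homogeneity and oddness of $\nabla\phi^*$) actually enter.
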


\begin{proof} 
This is a straightforward corollary of  ~\cite[Proposition~3.5]{CHMY}.
\end{proof}

\begin{proposition}\label{p:orto}
Let {$\Sigma$ be a $\phi$-critical surface} of class $\C^2$ and let   
$\Gamma=(\Xi,\zeta)$ be a characteristic curve on $\Sigma$. 
Then for every $p_0=(\xi_0,z_0)$ in $ \Gamma$
 we have
\begin{equation}\label{eq:orto}
{\cN^{\pm}(\xi_0)\in T_{\xi_0}\Xi},
\end{equation}
where $\cN^\pm$ is defined as in Lemma~\ref{p:cont}.
\end{proposition}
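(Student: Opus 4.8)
The plan is to reduce the statement to a jump relation for the normal trace of $\cN$ across $\Xi$, obtained by comparing the weak constant-curvature equation with the Gauss--Green formula applied separately on the two sides of the characteristic curve. The essential inputs will be the $\phi$-critical (weak divergence) equation, which holds \emph{across} $\Xi$, and the one-sided traces produced by Lemma~\ref{p:cont}, in particular the relation $\cN^+=-\cN^-$.

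First I would fix the local picture. Since $p_0=(\xi_0,z_0)\in\cC(\Sigma)$, near $p_0$ the surface $\Sigma$ is a $z$-graph of some $f\in\C^2$, and the projection of $\Gamma$ is $\Xi=\cC(f)$, which by Lemma~\ref{l:char1} is a $\C^1$ curve through $\xi_0$; shrinking $\delta$ as in \eqref{eq:divi} it splits the disc $\{|\xi-\xi_0|<\delta\}$ into the connected open sets $B^+$ and $B^-$. On $B^+\cup B^-$ the field $\cN=\X=\nabla\phi^*(F)$ of \eqref{eq:cN} is of class $\C^1$ (since $\phi^*\in\C^2$ and $F\in\C^1$ with $F\neq 0$ off $\Xi$), it is bounded because $\phi(\cN)=1$, and $\mathrm{div}\,\cN=h$ pointwise there. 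Crucially, as $\Sigma$ is $\phi$-critical, the weak identity $\int_D\langle\cN,\nabla\varphi\rangle\,d\xi=-h\int_D\varphi\,d\xi$ holds for every $\varphi\in\C^\infty_c(\{|\xi-\xi_0|<\delta\})$, i.e.\ across $\Xi$.

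Next I would carry out the trace computation. Let $N$ be a unit normal to $\Xi$ at $\xi_0$, pointing into $B^+$. For $\varphi$ supported in the disc I would apply the divergence theorem separately on $B^+$ and on $B^-$, using the one-sided traces $\cN^\pm$ furnished by Lemma~\ref{p:cont}: the interior terms give $-h\int_{B^\pm}\varphi$, while the only surviving boundary terms lie on $\Xi$, with outer conormals $-N$ and $+N$ respectively. Summing the two identities would yield $\int_D\langle\cN,\nabla\varphi\rangle\,d\xi=-h\int_D\varphi\,d\xi+\int_\Xi\langle\cN^--\cN^+,N\rangle\,\varphi\,d\mathcal H^1$, and comparison with the weak equation would force the singular term to vanish, giving $\langle\cN^+-\cN^-,N\rangle=0$ along $\Xi$. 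Finally, invoking $\cN^-=-\cN^+$ from Lemma~\ref{p:cont}, I would obtain $2\langle\cN^+(\xi_0),N\rangle=0$, so $\cN^+(\xi_0)\perp N$; as $N$ spans the normal line to $\Xi$ at $\xi_0$, this is precisely $\cN^+(\xi_0)\in T_{\xi_0}\Xi$, and likewise $\cN^-(\xi_0)=-\cN^+(\xi_0)\in T_{\xi_0}\Xi$.

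The main obstacle will be the rigorous justification of the Gauss--Green formula up to the characteristic curve, equivalently the claim that the jump of the normal traces of $\cN$ across $\Xi$ accounts for the entire singular part of $\mathrm{div}\,\cN$ on $\Xi$---which is zero, since $\mathrm{div}\,\cN=h\,\mathcal L^2$ is absolutely continuous. I expect to secure this either from the continuity of $\cN$ up to $\Xi$ from each side (built into the CHMY-type estimate behind Lemma~\ref{p:cont}) or from the theory of divergence-measure fields. Alternatively, one may verify it by a blow-up at $\xi_0$: since $F(\xi_0+\lambda\zeta)/\lambda\to JF(\xi_0)\zeta$ and $\nabla\phi^*$ is $0$-homogeneous, $\cN$ converges in $L^1_{\mathrm{loc}}$ to the piecewise-constant field $\pm\nabla\phi^*(w)$, where $w$ spans the one-dimensional image of $JF(\xi_0)$ and the dividing line is $\ker JF(\xi_0)=T_{\xi_0}\Xi$; its distributional divergence is the $\lambda\to0$ limit of $\lambda h\to0$ and therefore carries no mass on that line, yielding once more $\langle\nabla\phi^*(w),N\rangle=0$, i.e.\ $\cN^\pm(\xi_0)\in T_{\xi_0}\Xi$.
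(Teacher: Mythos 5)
Your proposal is correct and follows essentially the same route as the paper: the paper also compares the weak $\phi$-critical identity on the full disc with the divergence theorem applied separately on $D^\pm$, extracts the jump term $\int_\Xi\varphi\langle\cN^+-\cN^-,N_\Xi\rangle\,d\mathcal H^1=0$, and concludes via the relation $\cN^+=-\cN^-$ from Lemma~\ref{p:cont}. The technical point you flag (Gauss--Green up to $\Xi$) is handled in the paper exactly as you anticipate, through the one-sided continuity of $\cN$ provided by the CHMY-type result behind Lemma~\ref{p:cont}.
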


\begin{proof}  Let $f\in\C^2(D)$ be a graph function for $\Sigma$  with $\xi_0\in D\subset\R^2$. Without loss of generality we assume $D=\{|\xi-\xi_0|<\delta\}$ and let $D^\pm:=D\cap B^\pm$, where $B^\pm$ are 
{as in}
\eqref{eq:divi}. Let $h\in\R$ be the $\phi$-curvature of $\Sigma$. Since $\Sigma$ is $\phi$-critical, for any  $\varphi \in \C^\infty_c(D)$ we have
\[
\int_{D }\langle\X,\nabla \varphi\rangle\;d\xi = -  \int_D h\varphi\;d\xi
\]
and 
$\operatorname{div}(\X)=h$ pointwise in $D^+\cup D^-$.
Then,
{
denoting by $N_\Xi$ the normal to $\Xi$ pointing towards $D^-$,}
 by the divergence theorem we have  
\[\begin{split}
\int_D h\varphi\;d\xi&=\int_{D^+}\operatorname{div}(\X)\varphi\;d\xi+\int_{D^-}\operatorname{div}(\X)\varphi\;d\xi\\
&=-\int_{D^+\cup D^-}\langle\X,\nabla \varphi\rangle\;d\xi+\int_\Xi\varphi\langle\cN^+,N_\Xi\rangle\;d\mathcal H^1-\int_\Xi\varphi\langle\cN^-,N_\Xi\rangle\;d\mathcal H^1\\
&=\int_D h\varphi\;d\xi+\int_\Xi\varphi\langle\cN^+-\cN^-,N_\Xi\rangle\;d\mathcal H^1.
\end{split}
\]
By Lemma~\ref{p:cont}, this implies that
\[
 \int_\Xi \varphi\langle\cN^ + ,N_\Xi\rangle\;d\mathcal H^1=0
\]
and since   $\varphi$ is arbitrary, this yields the claim.
\end{proof}

 {
\begin{remark}
\label{p:chc2} Under the assumptions of the previous proposition, the characteristic curves $\Gamma=(\Xi,\zeta)$ of $\partial E$ are of class $\C^2$. This can be proved exactly as in  Proposition~4.20 of \cite{RR08} using %the %orthogonality  
{condition} \eqref{eq:orto}.
  In particular, $\Xi$ is of class $\C^2$.
\end{remark}
}

\subsubsection{Parametrization of constant $\phi$-curvature surfaces around characteristic curves}  

 In this section, we study a $\phi$-{critical}
  surface $\Sigma$ of class $\C^2$ having  constant $\phi$-curvature $h\neq 0$
 near a characteristic curve. 
 Without loss of generality we assume $h>0$.

 We assume $\phi$ to be normalized in such a way that $\phi(1,0)=1$ and we fix a parametrization $\mu:[0,M]\to\R^2$ of $C_\phi$ such that $\phi^\dagger( \dot\mu )= 1$, $\mu([0,M])=C_\phi$, with initial and end-point $\mu(0)=\mu(M)$. We choose the clockwise orientation and we extend $\mu$ to the whole $\R$ by $M$-periodicity.
We have $\mu\in \C^2(\R;\R^2)$ and
\begin{equation}\label{eq:phiphis}
\mu(\tau)=\nabla\phi^*(\dot \mu(\tau)^\perp),\qquad\mbox{ for all }\tau\in \R.
\end{equation}
In fact, letting $\mathcal N(t)=\nabla\phi^*(\dot\mu(t)^\perp)$, we have $\dot\cN=\dot\mu$ as in \eqref{eq:dotNperp}. Equation~\eqref{eq:phiphis} then follows by integration using the fact that $0$ is the center of $C_\phi$.

Let   $\Gamma=(\Xi,\zeta)\in \C^2(I; \Sigma)$  be a characteristic curve 
parameterized in such a way that  
\begin{equation}\label{eq:paraXi}
\phi(\dot \Xi) = 1\quad \text{on }I.
\end{equation}
Locally, $\Gamma$ disconnects $\Sigma$ and there are no other characteristic points of $\Sigma$ close to $\Gamma$, by Lemma~\ref{l:char1}.

According to Corollary~\ref{l:conto},
$\Sigma\setminus\cC(\Sigma)$ admits near $\Gamma$ a Legendre foliation made of horizontal lifts of $\phi$-circles of radius $1/h$, followed in the clockwise sense. Hence, given a point $(\xi_0,z_0)\in \Sigma\setminus\cC(\Sigma)$  near $\Gamma$, there exist $c\in \R^2$ and $\tau\in [0,M]$ such that the horizontal lift of 
\[\xi(s)=c+h^{-1}\mu(\tau+h s)\]
passing through $(\xi_0,z_0)$ at $s=0$
 stays in $\Sigma$ until it meets a characteristic point.
 Here, $c$ is the center of the $\phi$-circle.
Notice that $\nabla \phi^*(\dot\xi(s)^\perp)=\cN(\xi(s))$, so that, by Lemma~\ref{p:cont} and \eqref{eq:orto},  $\nabla \phi^*(\dot\xi(0)^\perp)$ converges to a vector 
collinear to $\dot\Xi(t)$ as $\xi_0$ approaches $\Xi(t)$ for some $t\in I$. 
By \eqref{l:norm1} and \eqref{eq:paraXi}, $\nabla \phi^*(\dot\xi(0)^\perp)$ converges either to $\dot \Xi(t)$ or to $-\dot \Xi(t)$ as $\xi_0$ approaches $\Xi(t)$. 
Since $\Xi$ locally disconnects the plane, we can fix a side
from where $\xi_0$ approaches $\Xi$ and, up to reversing the parameterization of $\Gamma$, we can assume that 
 $\nabla \phi^*(\dot\xi(0)^\perp)$ converges to $\dot \Xi(t)$ as $\xi_0$ converges to $\Xi(t)$. 
Thanks to \eqref{eq:phiphis} and since $\dot\xi(0)=\dot\mu(\tau)$, we deduce that
$\mu(\tau)=\nabla \phi^*(\dot\xi(0)^\perp)$
converges to $\dot \Xi(t)$ as $\xi_0\to \Xi(t)$. 
In particular, the limit direction of $\dot \xi(0)$ as $\xi_0\to \Xi(t)$ is transversal to $\Xi$.

By local compactness of the set of $\phi$-circles with radius $1/h$, the horizontal lift passing through $\Gamma(t)$ at $s=0$ of 
a curve 
$c+h^{-1}\mu(\tau+h s)$ 
with $\mu(\tau)=\dot \Xi(t)$
is a Legendre curve contained in $\Sigma$, for $s$ either in a positive or a negative neighborhood of $0$. To fix the notations, we assume that $s$ is in a positive neighborhood of $0$, the computations being 
equivalent in the other case.  
Moreover, there is no other Legendre curve having $\Gamma(t)$ in its closure and whose projection on the $xy$-plane stays in the chosen side of $\Xi$, since $\tau\in [0,M)$ and $c\in \R^2$ are uniquely determined by 
\[\mu(\tau)=\dot \Xi(t),\qquad 
c=\Xi(t)-h^{-1}\mu(\tau)=\Xi(t)-h^{-1}\dot\Xi(t).\]

It is then possible to  parameterize locally near $\Gamma$ one of the two connected components of $\Sigma\setminus \Gamma$ by Legendre curves using the  function
\begin{equation}\label{eq:gamma}
(t,s)\mapsto\gamma(t,s)=(\xi(t,s),z(t,s))
\end{equation} 
where 
\begin{equation}\label{eq:xi}
\xi(t,s)=h^{-1}\mu(\tau(t)+hs)+\Xi(t)-h^{-1}\dot\Xi(t),\quad t\in I,\ s>0,
\end{equation}
with $\tau$ uniquely defined via the equation  
\begin{equation}\label{eq:tau}
\mu(\tau(t))=\dot\Xi(t),\quad t\in I,
\end{equation}
and $z$ defined by
\begin{equation}\label{eq:z}
z(t,s)=\zeta(t)+\int_0^s \omega(\xi(t,\sigma),\xi_s(t,\sigma))d\sigma.
\end{equation}
As discussed above, we have
\begin{align}
\nabla\phi^*(\xi_s(t, 0)^\perp)&=\dot\Xi(t),\label{eq:NXi}\\
\label{eq:parad}
\phi^\dagger(\xi_s) &=1.
\end{align}

For $t\in I$, we define the \emph{characteristic time} $s(t)$ as 
the first positive time $s>0$ such $\gamma(t,s(t))\in\cC(\Sigma)$.
We will prove later that    such a $s(t)$  exists.
Finally, we let $S:=\{(t,s) : t\in I,\ 0\leq s\leq s(t)\}$ and we consider the surface $\gamma(S)\subset \Sigma$.

\begin{lemma}\label{p:parag}
We have  $\gamma \in  \C^1(S;\Sigma)$ with   $\gamma(\cdot,0)=\Gamma $. 
Moreover, the second order derivatives $\gamma_{ss},
\gamma_{ts},\gamma_{st}$ are well-defined and 
\begin{equation}
\label{eq:schwarz}\gamma_{ts}=\gamma_{st}.
\end{equation}
 \end{lemma}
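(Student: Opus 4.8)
The plan is to read off the regularity of $\gamma=(\xi,z)$ directly from the defining formulas \eqref{eq:xi}--\eqref{eq:z}, and then to establish \eqref{eq:schwarz} by an explicit matching of the two mixed second derivatives rather than by invoking Schwarz's theorem, since $\gamma$ will in general only be of class $\C^1$. The starting point is the regularity of the reparametrization $\tau$ defined by \eqref{eq:tau}. By Remark~\ref{p:chc2} the characteristic curve $\Gamma=(\Xi,\zeta)$ is of class $\C^2$, and by \eqref{eq:paraXi} we have $\dot\Xi(t)\in C_\phi$ for every $t$. Since $\mu$ is a $\C^2$ immersion (because $\phi^\dagger(\dot\mu)=1$ forces $\dot\mu\neq0$) that is injective on $[0,M)$ and traces $C_\phi$ once, it is a $\C^2$ diffeomorphism onto $C_\phi$; hence $\mu^{-1}$ is of class $\C^2$ and $\tau=\mu^{-1}\circ\dot\Xi$ is well-defined and of class $\C^1$. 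Differentiating $\mu(\tau(t))=\dot\Xi(t)$ gives $\dot\mu(\tau(t))\,\dot\tau(t)=\ddot\Xi(t)$, which is consistent since $\ddot\Xi(t)$, being the velocity of the curve $t\mapsto\dot\Xi(t)$ lying in $C_\phi$, is tangent to $C_\phi$ at $\dot\Xi(t)$ and hence collinear with $\dot\mu(\tau(t))$; this produces a continuous $\dot\tau$.

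With $\mu\in\C^2$, $\tau\in\C^1$, and $\Xi,\zeta\in\C^2$ at hand, I would then check that $\xi\in\C^1(S;\R^2)$: indeed $\xi_s(t,s)=\dot\mu(\tau(t)+hs)$ and $\xi_t(t,s)=h^{-1}\dot\mu(\tau(t)+hs)\dot\tau(t)+\dot\Xi(t)-h^{-1}\ddot\Xi(t)$ are both continuous on $S$ (the $s$-derivatives at $s=0$ are one-sided, but the formulas extend to $s<0$, so no boundary issue arises). For the $z$-component, the integrand in \eqref{eq:z} is $\C^1$ in $(t,\sigma)$, so $z_s=\omega(\xi,\xi_s)$ is continuous and, differentiating under the integral sign, $z_t=\dot\zeta+\int_0^s\big(\omega(\xi_t,\xi_s)+\omega(\xi,\partial_t\xi_s)\big)\,d\sigma$ is continuous as well; hence $z\in\C^1(S)$ and $\gamma\in\C^1(S;\Sigma)$, the inclusion of $\gamma(S)$ into $\Sigma$ being the Legendre-foliation property of Corollary~\ref{l:conto}. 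Evaluating at $s=0$ and using \eqref{eq:tau} gives $\xi(t,0)=h^{-1}\mu(\tau(t))+\Xi(t)-h^{-1}\dot\Xi(t)=\Xi(t)$ and $z(t,0)=\zeta(t)$, that is $\gamma(\cdot,0)=\Gamma$.

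Finally, I would compute each second-order derivative from the first-order formulas. One finds $\xi_{ss}=h\,\ddot\mu(\tau+hs)$, while both $\xi_{ts}=\partial_t\big(\dot\mu(\tau+hs)\big)$ and $\xi_{st}=\partial_s\big(\xi_t\big)$ equal $\ddot\mu(\tau(t)+hs)\,\dot\tau(t)$; for the $z$-component, differentiating $z_s=\omega(\xi,\xi_s)$ in $t$ and $z_t$ in $s$ (the latter by the fundamental theorem of calculus) both give $\omega(\xi_t,\xi_s)+\omega(\xi,\partial_t\xi_s)$. This shows that $\gamma_{ss},\gamma_{ts},\gamma_{st}$ are well-defined and that \eqref{eq:schwarz} holds. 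The point I expect to be the main obstacle, and the reason the statement is not immediate, is precisely that $\gamma$ need not be of class $\C^2$: since $\tau$ is only $\C^1$, the map $\xi$ inherits no second $t$-derivative, so Schwarz's theorem is unavailable and the identity $\gamma_{ts}=\gamma_{st}$ must be obtained from the explicit agreement of the two computations above, which works because the mixed derivative differentiates $\mu$ twice but $\tau$ only once.
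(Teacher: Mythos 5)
Your proof is correct and follows essentially the same route as the paper's: both establish \eqref{eq:schwarz} by computing $\xi_{ts}=\xi_{st}$ directly from the explicit formula \eqref{eq:xi} and then differentiating $z_s=\omega(\xi,\xi_s)$ in $t$ and the integral formula \eqref{eq:z} in $s$ to match $z_{st}$ with $z_{ts}$. You simply supply more detail than the paper does, in particular the $\C^1$ regularity of $\tau=\mu^{-1}\circ\dot\Xi$, which the paper leaves implicit.
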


\begin{proof}
By 
\eqref{eq:xi} and \eqref{eq:z}, we see  that $\gamma_{ss}$ exists and that $\xi_{ts}=\xi_{st}$.
Moreover,
\begin{align*}
z_{st}&=
\omega(\xi_t(t,\cdot),\xi_{s}(t,\cdot))+\omega(\xi(t,\cdot),\xi_{st}(t,\cdot))\\
&=\omega(\xi_t(t,\cdot),\xi_{s}(t,\cdot))+\omega(\xi(t,\cdot),\xi_{ts}(t,\cdot))
=
z_{ts}.
\end{align*}
\end{proof}

On the surface $\gamma(S)$ we consider the vector field 
\begin{equation} 
V(t,s):=\gamma_t(t,s)=(\xi_t(t,s),z_t(t,s))\in\R^3.
\end{equation}
It plays the role of the Jacobi vector field $V$ in \cite[Lemma~6.2]{RR08}. 
The characteristic time $s(t)$ is precisely  the first positive time such that $\langle V(s(t),t), Z\rangle_{\mathcal D}=0$. 
Here, with a slight abuse of notation, $\langle\cdot,\cdot\rangle_{\mathcal D}$ denotes the scalar product that makes $X,Y,Z$ orthonormal. The following computation 
is crucial in what follows. We recall that we are assuming the $\phi$-curvature to be a constant $h\neq 0$.

\begin{lemma}\label{l:contoVZ}
We have the identity 
\[
\langle V(t,s),Z \rangle_{\mathcal D}=
2\big[h^{-2}\omega(\ddot\Xi,\dot\Xi)
+\omega(\dot\Xi-h^{-1}\ddot\Xi,h^{-1}\mu(\tau+hs))\big].
\]
\end{lemma}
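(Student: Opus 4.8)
The plan is to compute $\langle V,Z\rangle_{\mathcal D}$ directly from the explicit parametrization \eqref{eq:xi}--\eqref{eq:z}, reducing everything to the symplectic form $\omega$. First I would rewrite the inner product in coordinates. Since $X=\partial_x-\tfrac y2\partial_z$ and $Y=\partial_y+\tfrac x2\partial_z$, inverting the frame gives $\partial_x=X+\tfrac y2 Z$, $\partial_y=Y-\tfrac x2 Z$, $\partial_z=Z$, so that any vector $V=(V_1,V_2,V_3)\in\R^3$ based at $\xi=(x,y)$ has $Z$-component $V_3+\tfrac y2 V_1-\tfrac x2 V_2$. Applied to $V=(\xi_t,z_t)$ this yields the key identity
\[
\langle V(t,s),Z\rangle_{\mathcal D}=z_t(t,s)-\omega(\xi(t,s),\xi_t(t,s)),
\]
which turns the problem into a computation involving only $z_t$ and $\omega$.

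Next I would evaluate $z_t$ by differentiating \eqref{eq:z} under the integral sign. Using Schwarz's identity $\xi_{st}=\xi_{ts}$ from \eqref{eq:schwarz}, the integrand becomes $\omega(\xi_t,\xi_\sigma)+\omega(\xi,\xi_{\sigma t})$, and the observation that $\tfrac{\partial}{\partial\sigma}\omega(\xi,\xi_t)=\omega(\xi_\sigma,\xi_t)+\omega(\xi,\xi_{t\sigma})$ lets me integrate the second term by parts in $\sigma$. This produces
\[
z_t(t,s)=\dot\zeta(t)+2\int_0^s\omega(\xi_t,\xi_\sigma)\,d\sigma+\omega(\xi(t,s),\xi_t(t,s))-\omega(\xi(t,0),\xi_t(t,0)).
\]
Substituting into the identity above, the term $\omega(\xi(t,s),\xi_t(t,s))$ cancels. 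I would then compute the boundary data at $s=0$: from \eqref{eq:xi} and \eqref{eq:tau} one gets $\xi(t,0)=\Xi(t)$, while differentiating $\mu(\tau(t))=\dot\Xi(t)$ gives $\dot\mu(\tau)\dot\tau=\ddot\Xi$ and hence $\xi_t(t,0)=\dot\Xi(t)$. Because $\Gamma\subset\cC(\Sigma)$ is a characteristic curve, its tangent lies in $T_p\Sigma=\cD(p)$, so $\Gamma$ is horizontal and $\dot\zeta(t)=\omega(\Xi,\dot\Xi)$; this exactly cancels $\omega(\xi(t,0),\xi_t(t,0))=\omega(\Xi,\dot\Xi)$, leaving the clean expression $\langle V,Z\rangle_{\mathcal D}=2\int_0^s\omega(\xi_t,\xi_\sigma)\,d\sigma$.

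It then remains to evaluate this integral. From \eqref{eq:xi} I would compute $\xi_\sigma=\dot\mu(\tau+h\sigma)$ and $\xi_t=h^{-1}\dot\tau\,\dot\mu(\tau+h\sigma)+\dot\Xi-h^{-1}\ddot\Xi$; the $\dot\mu$-with-itself contribution vanishes by antisymmetry of $\omega$, so $\omega(\xi_t,\xi_\sigma)=\omega(\dot\Xi-h^{-1}\ddot\Xi,\dot\mu(\tau+h\sigma))$. Pulling the constant (in $\sigma$) vector out and using $\int_0^s\dot\mu(\tau+h\sigma)\,d\sigma=h^{-1}(\mu(\tau+hs)-\mu(\tau))$ together with $\mu(\tau)=\dot\Xi$ produces precisely $h^{-2}\omega(\ddot\Xi,\dot\Xi)+\omega(\dot\Xi-h^{-1}\ddot\Xi,h^{-1}\mu(\tau+hs))$, and multiplying by $2$ gives the claim. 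The computation is essentially routine once the two structural inputs are in place; the main points requiring care are the integration by parts in $\sigma$ (relying on the equality of mixed partials from Lemma~\ref{p:parag}) and the cancellation of the $s=0$ boundary terms, which hinges on the horizontality of the characteristic curve $\Gamma$ and the normalization \eqref{eq:tau}. I expect the boundary-term bookkeeping to be the most error-prone step rather than a genuine obstacle.
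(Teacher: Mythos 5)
Your proof is correct, and while it starts from the same point as the paper's, it organizes the computation along a genuinely different (and cleaner) route. The paper also begins with the identity $\langle V,Z\rangle_{\mathcal D}=z_t+\omega(\xi_t,\xi)$ (its \eqref{eq:VZ}, which agrees with your $z_t-\omega(\xi,\xi_t)$ by antisymmetry) and then substitutes the explicit formulas \eqref{eq:xi}--\eqref{eq:z} into $z_t$ and $\omega(\xi_t,\xi)$ separately, carrying out a long term-by-term expansion in which the contributions involving $\Xi$, $\dot\tau$, and $\mu$ cancel only at the very end. You instead integrate by parts in $\sigma$ the term $\int_0^s\omega(\xi,\xi_{st})\,d\sigma$, using \eqref{eq:schwarz}, and observe that the boundary term at $s=0$ is killed by the horizontality of $\Gamma$ (so $\dot\zeta=\omega(\Xi,\dot\Xi)$) together with the normalization \eqref{eq:tau} (so $\xi(t,0)=\Xi$ and $\xi_t(t,0)=\dot\Xi$), while the boundary term at $\sigma=s$ cancels against $-\omega(\xi,\xi_t)$. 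This yields the intermediate identity $\langle V(t,s),Z\rangle_{\mathcal D}=2\int_0^s\omega(\xi_t,\xi_\sigma)\,d\sigma$, after which the claim follows at once from $\omega(\xi_t,\xi_\sigma)=\omega(\dot\Xi-h^{-1}\ddot\Xi,\dot\mu(\tau+h\sigma))$ and $\int_0^s\dot\mu(\tau+h\sigma)\,d\sigma=h^{-1}\bigl(\mu(\tau+hs)-\dot\Xi\bigr)$. Your intermediate identity is precisely the integrated form of \eqref{eq:claim1cc}, which the paper establishes separately in the proof of Proposition~\ref{p:constant}; your route therefore unifies the two computations and eliminates most of the bookkeeping, at the mild price of invoking the equality of mixed partials one step earlier. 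All the individual steps (the frame inversion giving the $Z$-component, the boundary values at $s=0$, and the final evaluation producing $h^{-2}\omega(\ddot\Xi,\dot\Xi)$) check out.
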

\begin{proof}
First notice that 
\begin{equation}\label{eq:VZ}
\langle V,Z\rangle_{\mathcal D}=z_t+\omega(\xi_t,\xi),
\end{equation}
where
\[
z_t(t,s)=z_t(t,0)+\int_0^s\omega(\xi_t(t,\sigma),\xi_s(t,\sigma))\;d\sigma+\int_0^s\omega(\xi(t,\sigma),\xi_{st}(t,\sigma))\;d\sigma.
\]
Using \eqref{eq:xi}, \eqref{eq:tau}, and the skew-symmetry of $\omega$, the above implies
\[
\begin{split}
z_t(\cdot,s)&=\omega(\Xi,\dot\Xi)
+\int_0^s\omega(\dot\Xi-h^{-1}\ddot\Xi+h^{-1}\dot\tau\dot\mu(\tau+h\sigma),\dot\mu(\tau+h\sigma))\;d\sigma\\
&\quad
+\int_0^s\omega(\Xi-h^{-1}\dot\Xi+h^{-1}\mu(\tau+h\sigma),\dot \tau\ddot\mu(\tau+h\sigma))\;d\sigma\\
&=\omega(\Xi,\dot\Xi)
+h^{-1}\omega(\dot\Xi-h^{-1}\ddot\Xi,\mu(\tau+hs)-\mu(\tau))
\\&\quad
+h^{-1}\omega(\Xi-h^{-1}\dot\Xi,\dot \tau\dot\mu(\tau+hs)-\dot \tau\dot\mu(\tau) )\\
&\quad
+h^{-2}\omega(\mu(\tau+hs),\dot \tau\dot\mu(\tau+hs))-h^{-2}\omega(\mu(\tau),\dot \tau\dot\mu(\tau))\\
&=\omega(\Xi,\dot\Xi)
+h^{-1}\omega(\dot\Xi-h^{-1}\ddot\Xi,\mu(\tau+hs))
-h^{-1}\omega(\dot\Xi-h^{-1}\ddot\Xi,\dot\Xi)\\
&\quad+h^{-1}\omega(\Xi-h^{-1}\dot\Xi,\dot\tau\dot\mu(\tau+hs))
-h^{-1}\omega(\Xi-h^{-1}\dot\Xi,\ddot\Xi)
\\&\quad
+h^{-2}\omega(\mu(\tau+hs),\dot\tau\dot\mu(\tau+hs))
-h^{-2}\omega(\dot\Xi,\ddot\Xi)\\
&=\omega(\Xi,\dot\Xi)
-h^{-1}\omega(\Xi,\ddot\Xi)
+h^{-2}\omega(\ddot\Xi,\dot\Xi)
+\omega(\dot\Xi-h^{-1}\ddot\Xi,h^{-1}\mu(\tau+hs))\\
&\quad
+h^{-1}\omega(\Xi-h^{-1}\dot\Xi+h^{-1}\mu(\tau+hs),\dot\tau\dot\mu(\tau+hs)).
\end{split}
\]
Moreover, we have
\[\begin{split}
\omega(\xi_t,\xi)&=
\omega(\dot\Xi-h^{-1}\ddot\Xi+h^{-1}\dot\tau\dot\mu(\tau+hs),\Xi-h^{-1}\dot\Xi+h^{-1}\mu(\tau+hs))
\\
&=h^{-1}\omega(\dot\tau\dot\mu(\tau+hs),\Xi-h^{-1}\dot\Xi
+h^{-1}\mu(\tau+hs))
+\omega(\dot\Xi,\Xi)
\\&\quad
-h^{-1}\omega(\ddot\Xi,\Xi)
+\omega(\dot\Xi-h^{-1}\ddot\Xi,h^{-1}\mu(\tau+hs))
+h^{-2}\omega(\ddot\Xi,\dot\Xi).
\end{split}
\]
Summing up, we obtain the claim.
\end{proof}

We show next that for every $t\in I$, the Legendre curve $s\mapsto \gamma(t,s )$ meets a characteristic point before that $\xi(t,s)$ comes back to the point $\xi(t,0)=\Xi(t)$, i.e., $hs(t)<M$.

\begin{lemma}\label{l:exss}
For any $t\in I$, there exists $s(t)\in(0,M/h)$ such that 
$\langle V(t,s(t)),Z\rangle_{\mathcal D}=0$. 
\end{lemma}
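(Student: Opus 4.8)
The plan is to exploit the formula from Lemma~\ref{l:contoVZ}, which exhibits $\langle V(t,s),Z\rangle_{\mathcal D}$ as an affine function of the point $P(s):=\mu(\tau(t)+hs)$ running along $C_\phi$. Writing $v:=\dot\Xi-h^{-1}\ddot\Xi$, $A:=h^{-1}v$ and $B:=h^{-2}\omega(\ddot\Xi,\dot\Xi)$, Lemma~\ref{l:contoVZ} reads $\tfrac12\langle V(t,s),Z\rangle_{\mathcal D}=\omega(A,P(s))+B=:g(s)$. I would first record that $g(0)=0$: since $P(0)=\mu(\tau(t))=\dot\Xi$ by \eqref{eq:tau}, a direct substitution using $\omega(\dot\Xi,\dot\Xi)=0$ gives $\omega(A,\dot\Xi)+B=-h^{-2}\omega(\ddot\Xi,\dot\Xi)+h^{-2}\omega(\ddot\Xi,\dot\Xi)=0$, consistent with $\gamma(t,0)=\Gamma(t)\in\cC(\Sigma)$. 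Because $\mu$ is $M$-periodic, $g$ is $(M/h)$-periodic, and $s\mapsto P(s)$ parametrizes the whole of $C_\phi$ bijectively on $[0,M/h)$.

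The zeros of $g$ on $[0,M/h)$ are exactly the parameters $s$ for which $P(s)$ lies on the affine line $L:=\{w\in\R^2:\omega(A,w)=-B\}$. I would next show that $L$ meets $C_\phi$ transversally at $\dot\Xi$. Differentiating the identity $\mu(\tau(t))=\dot\Xi(t)$ in $t$ yields $\ddot\Xi=\dot\tau\,\dot\mu(\tau)$, so that $v=\mu(\tau)-h^{-1}\dot\tau\,\dot\mu(\tau)$ differs from $\mu(\tau)=\dot\Xi$ by a multiple of $\dot\mu(\tau)$. Hence $v$ (equivalently $A$) is parallel to $\dot\mu(\tau)$ if and only if $\dot\Xi=\mu(\tau)$ is parallel to $\dot\mu(\tau)$, i.e.\ if and only if the tangent line to $C_\phi$ at $\dot\Xi$ passes through the origin. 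Since the origin is an interior point of the unit ball of $\phi$, no tangent line to $C_\phi$ contains it; therefore $A$ is not parallel to $\dot\mu(\tau)$, so $A\neq0$, $L$ is a genuine line, and $g'(0)=h\,\omega(A,\dot\mu(\tau))\neq0$.

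To conclude I would invoke strict convexity of $C_\phi$: since $\phi^*$ is of class $\C^2$, hence $\C^1$, the unit ball of $\phi$ is strictly convex and $C_\phi$ contains no segment, so any line that is not tangent to $C_\phi$ meets it in exactly two points. As $L$ crosses $C_\phi$ transversally at $P(0)=\dot\Xi$, it meets $C_\phi$ at exactly one further point $P(s_1)$ with a unique $s_1\in(0,M/h)$; these are the only two zeros of $g$ on $[0,M/h)$. Consequently $g$ does not vanish on $(0,s_1)$ while $g(s_1)=0$, which means $V(t,s)$ has nonvanishing $Z$-component for $0<s<s_1$ and $\langle V(t,s_1),Z\rangle_{\mathcal D}=0$. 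Thus the first positive zero is $s(t)=s_1\in(0,M/h)$, as claimed. The main obstacle is the transversality step, and the decisive observation is that differentiating $\mu(\tau(t))=\dot\Xi(t)$ forces $\ddot\Xi\parallel\dot\mu(\tau)$, which together with the interiority of the origin in the $\phi$-ball rules out the tangential (degenerate) case.
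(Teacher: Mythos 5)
Your proof is correct, and it reaches the conclusion by a genuinely different route from the paper. The paper works with the scalar function $\theta(s)=\omega(\dot\Xi-h^{-1}\ddot\Xi,h^{-1}\mu(\tau+hs))$ and finds a solution of $\theta(s)=b$ in $(0,M/h)$ by the intermediate value theorem: after checking $\dot\theta(0)=\omega(\mu(\tau),\dot\mu(\tau))\neq 0$ (the same non-degeneracy you establish, namely that tangent lines to $C_\phi$ miss the origin), it uses the central symmetry $\mu(\sigma+M/2)=-\mu(\sigma)$ to produce a point where $\theta$ lies on the opposite side of the level $b$, splitting into cases according to the signs of $b$ and $\dot\theta(0)$. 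You instead read the zero set of $\langle V(t,\cdot),Z\rangle_{\mathcal D}$ off Lemma~\ref{l:contoVZ} as $\{s: \mu(\tau+hs)\in L\}$ for an affine line $L$ through $\dot\Xi=\mu(\tau)$, show that $L$ is transversal to $C_\phi$ there, and invoke strict convexity of $C_\phi$ (legitimately available here, since $\phi^*\in\C^1$ forces the unit ball of $\phi$ to be strictly convex) to conclude that $L$ meets $C_\phi$ in exactly one further point. Your argument buys more: it shows that the zero in $(0,M/h)$ is unique, so the ``first characteristic time'' $s(t)$ is genuinely well defined and isolated from $0$, which the paper's intermediate-value argument does not give directly. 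The paper's argument, in exchange, uses only convexity and central symmetry of $C_\phi$, not strict convexity. Both proofs hinge on the same decisive observation that differentiating \eqref{eq:tau} makes $\ddot\Xi$ parallel to $\dot\mu(\tau)$, so that the relevant derivative at $s=0$ reduces to $\omega(\mu(\tau),\dot\mu(\tau))$.
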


\begin{proof}
For fixed $t$, consider the function $\theta:[0,M]\to\R$, defined by 
\[
\theta(s)=\omega(\dot\Xi-h^{-1}\ddot\Xi,h^{-1}\mu(\tau+hs)).
\] 
By Lemma~\ref{l:contoVZ}, we have that $\langle V(t,s),Z\rangle_{\mathcal D}=0$ if and only if $\theta(s)=b$ with  $b:=h^{-2}\omega(\dot\Xi,\ddot\Xi)$.
The equation $\theta(s) =b$ is certainly satisfied 
for $hs=nM$, $n\in\N$. This follows by the  $M$-periodicity of $\mu$ 
and the fact that $V(t,0)=\dot \Gamma(t)$ is horizontal.

It is enough to consider the case 
 $b\geq0$, the case $b<0$ being analogous. 
By \eqref{eq:tau} we have 
\[
\dot\theta(0)=\omega(\dot\Xi-h^{-1}\ddot\Xi,\dot\mu(\tau))
=\omega(\mu(\tau),\dot\mu(\tau)).
\]
By the  fact that   $C_\phi$ is a convex curve around $0$, it follows that  $\dot\theta(0)\neq 0$.

If $\dot\theta(0)>0$ there exists $s^*\in (0,M/(2h))$ such that $\theta(s^*)>\theta(0)=b$. In this case, by symmetry of $C_\phi$ we have $\mu(\tau+h(s^*+M/(2h)))=-\mu(\tau+hs^*)$, thus implying $\theta(s^*+M/(2h))=-\theta(s^*)<-b\leq 0$. By continuity of $\theta$, we deduce the existence of $\bar s\in(0,M/h)$ satisfying $\theta(\bar s)=b$. We argue in the same way in the case   $\dot\theta(0)<0$.
\end{proof}

We now determine a  quantity that remains constant along the Legendre curves $s\mapsto \gamma(t,s)$.

\begin{proposition}\label{p:constant}
For any $t\in I$ and for all $s\in [0,s(t)]$ we have
\begin{equation}
\langle V(t,s),Z\rangle_{\mathcal D}+h\langle\nabla\phi^\dagger(\xi_s(t,s)),\xi_t(t,s)\rangle = 0.
\end{equation}
\end{proposition}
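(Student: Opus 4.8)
The plan is to fix $t\in I$ and to prove that the scalar function
\[
\Phi(s):=\langle V(t,s),Z\rangle_{\mathcal D}+h\,\langle\nabla\phi^\dagger(\xi_s(t,s)),\xi_t(t,s)\rangle,\qquad s\in[0,s(t)],
\]
vanishes identically, either by direct substitution of the explicit parametrization or by reading $\Phi$ as a conserved quantity of the Legendre flow. Most of the work sits in the second summand, which I would first make explicit through \eqref{eq:xi}--\eqref{eq:tau}. Writing $\sigma=\tau(t)+hs$, one has $\xi_s=\dot\mu(\sigma)$ and $\xi_t=h^{-1}\dot\tau\,\dot\mu(\sigma)+\dot\Xi-h^{-1}\ddot\Xi$. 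The central observation is that $\nabla\phi^\dagger(\dot\mu(\sigma))$ is a multiple of $\mu(\sigma)^\perp$ (the global sign being fixed by the conventions): this follows from $\phi^\dagger=\phi^*\circ{\perp}$ by the chain rule together with \eqref{eq:phiphis}, the normalization $\langle\nabla\phi^\dagger(v),v\rangle=\phi^\dagger(v)=1$ and \eqref{eq:parad} pinning it down. Substituting and using $\langle a^\perp,b\rangle=2\omega(a,b)$ reduces $\langle\nabla\phi^\dagger(\xi_s),\xi_t\rangle$ to an affine combination of $\omega(\mu(\sigma),\dot\mu(\sigma))$ and $\omega(\dot\Xi-h^{-1}\ddot\Xi,\mu(\sigma))$.

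Two scalar identities then do the bookkeeping. Since $0$ is the center of $C_\phi$ and $\mu$ is parametrized by $\phi^\dagger$-arclength, one gets the constant $\omega(\mu,\dot\mu)=-\tfrac12$ (equivalently $\langle\nabla\phi^\dagger(\dot\mu),\dot\mu\rangle=1$), so the $\omega(\mu,\dot\mu)$-contribution is $s$-independent; differentiating $\mu(\tau)=\dot\Xi$ from \eqref{eq:tau} gives $\ddot\Xi=\dot\tau\,\dot\mu(\tau)$ and hence $\omega(\ddot\Xi,\dot\Xi)=\tfrac12\dot\tau$. Adding $h\,\langle\nabla\phi^\dagger(\xi_s),\xi_t\rangle$ to the formula for $\langle V(t,s),Z\rangle_{\mathcal D}$ supplied by Lemma~\ref{l:contoVZ}, the coefficient $h$ is exactly the one arranging that the terms carrying the $s$-dependence through $\mu(\sigma)$ cancel, after which the surviving $s$-independent part collapses to a multiple of $2\omega(\ddot\Xi,\dot\Xi)-\dot\tau=0$. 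This yields $\Phi\equiv0$, in particular $\Phi(0)=0$, the latter being also transparent from the fact that $V(t,0)=\dot\Gamma(t)$ is horizontal.

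An equivalent, more conceptual route is to verify $\Phi(0)=0$ and $\frac{d}{ds}\Phi=0$ separately. For the first summand, \eqref{eq:schwarz} together with $z_s=\omega(\xi,\xi_s)$ from \eqref{eq:z} gives $\frac{d}{ds}\langle V,Z\rangle_{\mathcal D}=2\omega(\xi_t,\xi_s)$; for the second, one differentiates and invokes the $\phi^\dagger$-geodesic equation $\mathcal H\phi^\dagger(\xi_s)\,\xi_{ss}=\lambda_0\,\xi_s^\perp$ satisfied by the Legendre curves (Theorem~\ref{l:PMP}(iii) and Corollary~\ref{l:conto}, with $|\lambda_0|=h$ tied to the radius $1/h$ as in Remark~\ref{PALLO}), the normalization $\langle\nabla\phi^\dagger(\xi_s),\xi_{ss}\rangle=0$ coming from \eqref{eq:parad}, and the proportionality $\xi_{ts}=\tfrac{\dot\tau}{h}\xi_{ss}$ read off \eqref{eq:xi}; the two derivatives then cancel. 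I expect the only delicate point in either route to be the determination of the constant relating the curvature $h$ to the multiplier $\lambda_0$, that is, the careful tracking of the signs produced by the perp operator, the clockwise orientation of $C_\phi$, and the symplectic form $\omega$, so that the cancellation is exact.
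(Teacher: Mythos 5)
Your overall strategy is the paper's own: the published proof follows exactly your ``more conceptual route'', showing that $\Lambda_t(s)=\langle V,Z\rangle_{\mathcal D}+h\langle\nabla\phi^\dagger(\xi_s),\xi_t\rangle$ has vanishing $s$-derivative (via $\tfrac{\partial}{\partial s}\langle V,Z\rangle_{\mathcal D}=2\omega(\xi_t,\xi_s)$ from \eqref{eq:schwarz} and \eqref{eq:z}, the geodesic equation for $\nabla\phi^\dagger(\xi_s)$, and $\langle\nabla\phi^\dagger(\xi_s),\xi_{st}\rangle=0$ from \eqref{eq:parad}) and then checking $\Lambda_t(0)=0$ from the horizontality of $\Gamma$ and \eqref{eq:NXi}. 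Your first route is an equivalent direct substitution into Lemma~\ref{l:contoVZ}, and its structural ingredients ($\nabla\phi^\dagger(\dot\mu(\sigma))=-\mu(\sigma)^\perp$ from \eqref{eq:phiphis}, $\omega(\mu,\dot\mu)=-\tfrac12$, $\omega(\ddot\Xi,\dot\Xi)=\tfrac12\dot\tau$) are all correct.

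The gap sits exactly at the step you defer as ``the only delicate point'': you assert, without computing, that the coefficient $h$ is the one making the $\mu(\sigma)$-dependent terms cancel, and it is not. Carrying out your own first route gives $h\langle\nabla\phi^\dagger(\xi_s),\xi_t\rangle=\dot\tau+2h\,\omega(\dot\Xi-h^{-1}\ddot\Xi,\mu(\sigma))$, while Lemma~\ref{l:contoVZ} gives $\langle V,Z\rangle_{\mathcal D}=2h^{-2}\omega(\ddot\Xi,\dot\Xi)+2h^{-1}\omega(\dot\Xi-h^{-1}\ddot\Xi,\mu(\sigma))$; the $\mu(\sigma)$-terms cancel only for the coefficient $-h^{-1}$, and with that choice the remaining constant is $h^{-2}\bigl(2\omega(\ddot\Xi,\dot\Xi)-\dot\tau\bigr)=0$ exactly as you predict. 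Equivalently, in your second route the multiplier is $\lambda_0=-h$ (consistent with Remark~\ref{PALLO} and radius $1/h$: $\tfrac{\partial}{\partial s}\nabla\phi^\dagger(\xi_s)=-h\dot\mu(\sigma)^\perp=-h\xi_s^\perp$), so $h\tfrac{\partial}{\partial s}\langle\nabla\phi^\dagger(\xi_s),\xi_t\rangle=2h^{2}\omega(\xi_t,\xi_s)$, which does not cancel $2\omega(\xi_t,\xi_s)$. A concrete test: for the Euclidean norm with $\Xi(t)=(t,0)$ one has $\tau\equiv 0$, $\xi(t,s)=(t+h^{-1}(\cos(hs)-1),-h^{-1}\sin(hs))$, hence $\langle V,Z\rangle_{\mathcal D}=-h^{-1}\sin(hs)$ and $\langle\nabla\phi^\dagger(\xi_s),\xi_t\rangle=-\sin(hs)$; the stated combination equals $-(h+h^{-1})\sin(hs)\neq 0$, while the combination with $-h^{-1}$ vanishes identically. (The paper's proof obtains the stated constant from the line $\mathcal{H}\phi^\dagger(\xi_s)\xi_{ss}=\tfrac1h\xi_s^\perp$, which is inconsistent with the parametrization \eqref{eq:xi} and \eqref{eq:phiphis}; none of this affects Proposition~\ref{p:sconstant}, which only uses that the two summands vanish simultaneously.) The point is that the sign and constant bookkeeping you postponed is not a formality: it changes the coefficient in the identity, and a proof that simply asserts the cancellation for the coefficient $h$ cannot be completed as written.
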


\begin{proof}
By \eqref{eq:VZ}, 
\eqref{eq:z} and \eqref{eq:schwarz},  we have
\begin{align}
\frac{\partial}{\partial s}\langle V,Z\rangle_{\mathcal D}
&=z_{ts}+\omega(\xi_{ts},\xi)+\omega(\xi_t,\xi_s)
=\frac{\partial}{\partial t}\omega(\xi,\xi_s)+\omega(\xi_{st},\xi)+\omega(\xi_t,\xi_s)\nonumber\\
&=\omega(\xi_t,\xi_s)+\omega(\xi,\xi_{st})+\omega(\xi_{st},\xi)+\omega(\xi_t,\xi_s)=2\omega(\xi_t,\xi_s).\label{eq:claim1cc}
\end{align}

We claim that  
\begin{equation}\label{eq:claim2cc}
h\frac{\partial}{\partial s}(\langle\nabla\phi^\dagger(\xi_s),\xi_t\rangle)=2\omega(\xi_s,\xi_t).
\end{equation}
Indeed, by  Theorem~\ref{l:PMP} and Remark~\ref{PALLO}, we have 
\begin{equation}
\frac{\partial}{\partial s}\nabla\phi^\dagger(\xi_s)=\mathcal H\phi^\dagger(\xi_s)\xi_{ss}=\frac{1}{h}\xi_s^\perp,
\end{equation}
and therefore
\[
\begin{split}
\frac{\partial}{\partial s}\langle\nabla\phi^\dagger(\xi_s(t,s)),\xi_t(t,s)\rangle
&=
\frac{1}{h}\langle\xi_s^\perp,\xi_t\rangle
+\langle\nabla\phi^\dagger(\xi_s),\xi_{st}\rangle
\end{split}
\]
On   differentiating \eqref{eq:parad} w.r.t.\ $t$ we see that $\langle\nabla\phi^\dagger(\xi_s),\xi_{st}\rangle=0$.  This is \eqref{eq:claim2cc}.

Summing up \eqref{eq:claim1cc} and \eqref{eq:claim2cc}, we deduce that the function $\Lambda_{t}(s)=\langle V(t,s),Z\rangle_{\mathcal D}+h\langle\nabla\phi^\dagger(\xi_s(t,s)),\xi_t(t,s)\rangle$
is constant. 
To conclude the proof it is enough to check that $\Lambda_t(0)=0$.
  On the one hand, we have 
$\langle V(t,0),Z\rangle_{\mathcal D}=\langle \dot\Gamma(t),Z\rangle_{\mathcal D}=0$,
since $\Gamma$ is horizontal. On the other hand, since $\nabla\phi^\dagger(v)=-\nabla\phi^*(v^\perp)^\perp$ for any $v\neq 0$, using \eqref{eq:NXi} 
we finally obtain
\[
\langle\nabla\phi^\dagger(\xi_s(t,0)),\xi_t(t,0)\rangle= -\langle\nabla\phi^*(\xi_s(t,0)^\perp)^\perp,\dot\Xi(t)\rangle=0.
\qedhere
\] 
\end{proof}

Since the set $\Gamma_1:=\{\gamma(t,s(t)) : t\in I\}$ is made of characteristic points, it is either an isolated point or a nontrivial characteristic curve (Lemma~\ref{l:char1}). We will see in  the proof of Theorem~\ref{thmi:class}, contained in Section \ref{s:proof2}, 
that if $\Gamma_1$ were an isolated characteristic point, then the same would be true for $\Gamma$. We stress that the argument leading to such a conclusion does not rely on  the characterization of $\Gamma$ provided in this section.  
We then have  that $\Gamma_1:=\{\gamma(t,s(t)) : t\in I\}$ is a nontrivial characteristic curve.

\begin{proposition}\label{p:sconstant}
The function  $t\mapsto s(t)$ is constant.
\end{proposition}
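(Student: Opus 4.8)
The plan is to evaluate the conserved quantity of Proposition~\ref{p:constant} at the characteristic time and to combine it with the tangency that Proposition~\ref{p:orto} forces at the outgoing characteristic curve $\Gamma_1$. Write $\tau_1=\tau_1(t):=\tau(t)+hs(t)$ and let $\Xi_1(t):=\xi(t,s(t))$ be the planar projection of $\Gamma_1(t)=\gamma(t,s(t))$. From \eqref{eq:xi} and \eqref{eq:tau} one has $\xi_s(t,s)=\dot\mu(\tau(t)+hs)$, hence $\xi_s(t,s(t))=\dot\mu(\tau_1)$; together with \eqref{eq:phiphis} and the identity $\nabla\phi^\dagger(v)=-\nabla\phi^*(v^\perp)^\perp$ this gives $\nabla\phi^\dagger(\xi_s(t,s(t)))=-\mu(\tau_1)^\perp$. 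By the definition of the characteristic time, $\langle V(t,s(t)),Z\rangle_{\mathcal D}=0$, so Proposition~\ref{p:constant} (recall $h\neq0$) yields $\langle\nabla\phi^\dagger(\xi_s(t,s(t))),\xi_t(t,s(t))\rangle=0$, that is $\langle\mu(\tau_1)^\perp,\xi_t(t,s(t))\rangle=0$. Thus $\xi_t(t,s(t))$ is parallel to $\mu(\tau_1)$.

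Next I would use that $\Gamma_1$ is a nontrivial characteristic curve of the $\phi$-critical surface $\Sigma$ (as recorded just before the statement), of class $\C^2$ by Remark~\ref{p:chc2}, so that $\Xi_1$ is a $\C^2$ planar curve. The Legendre leaves abut $\Gamma_1$ from the chosen side, and along them the $\phi$-normal $\cN=\nabla\phi^*(F)$ tends, as $s\to s(t)$, to a nonzero multiple of $\nabla\phi^*(\xi_s(t,s(t))^\perp)=\mu(\tau_1)$. Applying Proposition~\ref{p:orto} to $\Gamma_1$ (together with Lemma~\ref{p:cont}) gives $\cN^\pm(\Xi_1(t))\in T_{\Xi_1(t)}\Xi_1$, whence $\mu(\tau_1)\in\mathbb R\,\dot\Xi_1(t)$; that is, $\dot\Xi_1(t)$ is parallel to $\mu(\tau_1)$.

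Differentiating $\Xi_1(t)=\xi(t,s(t))$ then gives $\dot\Xi_1(t)=\xi_t(t,s(t))+\dot s(t)\,\dot\mu(\tau_1)$. Since both $\dot\Xi_1(t)$ and $\xi_t(t,s(t))$ are parallel to $\mu(\tau_1)$, the vector $\dot s(t)\,\dot\mu(\tau_1)$ is parallel to $\mu(\tau_1)$ as well. But $\mu(\tau_1)$ and $\dot\mu(\tau_1)$ are linearly independent, since the tangent line to the convex curve $C_\phi$ at $\mu(\tau_1)$ does not pass through the interior point $0$ --- equivalently $\omega(\mu(\tau_1),\dot\mu(\tau_1))\neq0$, as already used in Lemma~\ref{l:exss}. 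Therefore $\dot s(t)=0$ for every $t\in I$, and $s$ is constant.

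The principal obstacle I anticipate is of a technical, regularity nature: to differentiate $t\mapsto\xi(t,s(t))$ one must first know that $s$ is $\C^1$. I would obtain this from the implicit function theorem applied to $\langle V(t,s),Z\rangle_{\mathcal D}=0$, whose $s$-derivative equals $2\omega(\xi_t,\xi_s)$ by \eqref{eq:claim1cc}; at $s=s(t)$ this is $2\,\omega(\xi_t(t,s(t)),\dot\mu(\tau_1))$, which is nonzero exactly when $\xi_t(t,s(t))\neq0$. On the closed set of $t$ where $\xi_t(t,s(t))=0$, the relation $\dot\Xi_1=\dot s\,\dot\mu(\tau_1)$ combined with $\dot\Xi_1\parallel\mu(\tau_1)$ still forces $\dot s=0$ directly, so the conclusion persists; the delicate point is to patch this degenerate regime with the transverse one while keeping $s$ continuous as the first positive zero of $\langle V,Z\rangle_{\mathcal D}$. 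The geometric heart of the argument --- the two parallelism conditions pinning $\dot s\,\dot\mu(\tau_1)$ to the radial direction $\mu(\tau_1)$ --- is otherwise an immediate consequence of Propositions~\ref{p:constant} and~\ref{p:orto}.
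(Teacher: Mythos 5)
Your proof is correct and follows essentially the same route as the paper's: evaluate the conserved quantity of Proposition~\ref{p:constant} at the characteristic time, invoke Proposition~\ref{p:orto} for the outgoing characteristic curve $\Gamma_1$, differentiate $\Xi_1(t)=\xi(t,s(t))$, and extract $\dot s=0$ from a non-degeneracy of $C_\phi$. Your final step (two parallelisms to $\mu(\tau_1)$ plus linear independence of $\mu$ and $\dot\mu$) is just a rephrasing of the paper's pairing of $\dot\Xi_1$ with $\nabla\phi^\dagger(\xi_s)$ together with $\langle\nabla\phi^\dagger(\xi_s),\xi_s\rangle=\phi^\dagger(\xi_s)=1$, and your extra caution about the implicit function theorem in the degenerate case $\xi_t(t,s(t))=0$ is a point the paper glosses over rather than a divergence in method.
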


\begin{proof}
Let $t\in I$. Since $\langle V(t,s(t)),Z\rangle_{\mathcal D}=0$, the point $\gamma(t,s(t))$ is characteristic for $\Sigma$. Then, by Lemma~\ref{l:char1} and Remark~\ref{p:chc2}, $\Gamma_1$ is a $\C^2$ characteristic curve.
 By the implicit function theorem,
 the function $t\mapsto s(t)$ is $\C^1$-smooth and for $t\in I$ we have
\[
\dot\Gamma_1(t)=V(t,s(t))+\dot s(t)\gamma_s(t,s(t)).
\]
The curve $\Xi_1$ obtained by projecting $\Gamma_1$ on the $xy$-plane then  satisfies 
\[
\dot\Xi_{1}(t)=\xi_t(t,s(t))+\dot s(t)\xi_s(t,s(t)).
\]
Since $\gamma(t,s(t))\in\cC(\Sigma)$, by Proposition~\ref{p:orto},
and using the fact that $\nabla\phi^\dagger(v)=-\nabla\phi^*(v^\perp)^\perp$ for any $v\neq 0$ we have 
\[
\langle\nabla\phi^\dagger(\xi_s(t, s(t))),\dot\Xi_1(t)\rangle=-\langle\nabla\phi^*(\xi_s(t, s(t))^\perp)^\perp,\dot\Xi_1(t)\rangle=0.
\]
Therefore we obtain 
\begin{equation}\label{eq:ppp}
0=\langle\nabla\phi^\dagger(\xi_s(t, s(t))),\xi_t(t,s(t))\rangle+\dot s(t)\langle\nabla\phi^\dagger(\xi_s(t, s(t))),\xi_s(t,s(t))\rangle,
\end{equation}
where, by Proposition~\ref{p:constant},
\[
\langle\nabla\phi^\dagger(\xi_s(t, s(t))),\xi_t(t,s(t))\rangle=0,
\]
and moreover, by \eqref{eq:parad},
\[
\langle\nabla\phi^\dagger(\xi_s(t, s(t))),\xi_s(t,s(t))=\phi^\dagger(\xi_s(t,s(t)))=1.
\]
Equation \eqref{eq:ppp} thus implies $\dot s= 0$, which concludes the proof.
\end{proof}

We are now ready to prove Theorem~\ref{thm:ccurves}.

\begin{proof}[Proof of Theorem~\ref{thm:ccurves}]{Without loss of generality we assume $h>0$.}
By Remark~\ref{p:chc2}, $\Gamma$ is of class $\C^2$ and we denote by $I$ an interval of parametrization of $\Gamma=(\Xi,\zeta)$ satisfying \eqref{eq:paraXi}. We consider the parametrization $\gamma$ given by Lemma~\ref{p:parag}. By Proposition~\ref{p:sconstant}  the characteristic time $s(t)$ is constant on $I$ and we let $s(t)=\bar s\in\R$. 
Since $\langle V(t,\bar s),Z\rangle_{\mathcal D}=0$, by Lemma~\ref{l:contoVZ} we thus have 
\[
h^{-2}\omega(\ddot\Xi(t),\dot\Xi(t))
+\omega(\dot\Xi(t)-h^{-1}\ddot\Xi(t),h^{-1}\mu(\tau(t)+h\bar s))=0.
\]
Using \eqref{eq:tau}, the last equation reads
\begin{equation}\label{eq:gatto}
\dot\tau\omega(\dot\mu(\tau),\mu(\tau)-\mu(\tau+h\bar s))=h\omega(\mu(\tau+h\bar s),\mu(\tau)).
\end{equation} 
If the right-hand side is $0$ at some $t\in I$, then    $\mu(\tau(t))$ and $\mu(\tau(t)+h\bar s)$ are parallel by definition of $\omega$ (cf.~\eqref{eq:omega}). Since $h\bar s\in (0,M)$ by Lemma~\ref{l:exss}, the only possible choice is $h\bar s=M/2$. Plugging such choice into the left-hand side and using the fact that $\mu(\tau+M/2)=-\mu(\tau)$, we obtain
\[
2\dot\tau\omega(\dot\mu(\tau),\mu(\tau))= 0\quad\text{on}\quad I.
\]
This implies that $\dot\tau=0$ on $I$ and therefore that $\tau$ is constant on $I$. By \eqref{eq:tau} we deduce that  
$\dot \Xi$ is constant on $I$ implying that $\Xi$ is a straight line.

We are now left to consider the case $h\bar s\in(0,M)$, $h\bar s\neq M/2$, 
so that  $\omega(\mu(\tau(t)+h\bar s),\mu(\tau(t)))\neq 0$ for every $t\in I$. Equation~\eqref{eq:gatto} reads
\[
\dot\tau= f(\tau)\quad\text{with}\quad f(\tau):=\frac{h\omega(\mu(\tau+h\bar s),\mu(\tau))}{\omega(\dot\mu(\tau),\mu(\tau)-\mu(\tau+h\bar s))}.
\]
For the sake of simplicity, assume $0\in I$. 
Notice that 
$f$ 
is $M/2$-periodic 
and 
of class $\C^1$ as a function of $\tau$, therefore $f$ is bounded. Hence, given $\tau_0\in\R$ satisfying $\mu(\tau_0)=\dot \Xi(0)$, there is a unique maximal solution  $\tau$ defined on the whole $\R$ to the differential equation with  the initial condition $\tau(0)=\tau_0$. Since $h\bar s\in (0,M)$, $h\bar s\neq M/2$, we have $f(\tau)\neq 0$, yielding 
that 
$|\dot \tau|$ is lower bounded by a positive constant.
To fix the ideas, assume that  $\operatorname{sign}(\dot\tau)=1$. Then, there exists $T_0>0$ such that $\tau(T_0)=\tau_0+M/2$. 
We claim that 
\begin{equation}
\label{eq:pert}
\tau(t+T_0)=\tau(t)+\frac{M}{2}\qquad\text{for all\ } t\in\R.
\end{equation}
This follows from the fact that $\tau_1(t):=\tau(T_0+t)$ and $\tau_2(t):=\tau(t)+M/2$ for $t\in\R$ solve the same Cauchy problem 
$
\dot\tau(t)=f(\tau)$,
$\tau(0)=\tau_0+M/2$.
Then, by \eqref{eq:pert}, $M$-periodicity of $\mu$, and \eqref{eq:tau}, we have for every $t\in\R$ 
\[
\dot\Xi(t+2T_0)=\mu(\tau(t+2T_0))=\mu(\tau(t)+M)=\mu(\tau(t))=\dot\Xi(t),
\]
i.e., $\dot\Xi$ is $2T_0$-periodic.
This implies that $\Xi$ is also $2T_0$-periodic. Indeed, for $t\in\R$ we have
\[
\begin{split}
\Xi(t+2T_0)-\Xi(t)&=\int_t^{t+2T_0}\dot\Xi(\sigma)\;d\sigma=\int_t^{t+2T_0}\mu(\tau(\sigma))\;d\sigma\\
&= \int_t^{t+T_0}\mu(\tau(\sigma))\;d\sigma+ \int_{t}^{t+T_0}\mu(\tau(\sigma+T_0))\;d\sigma\\
&= \int_t^{t+T_0}\mu(\tau(\sigma))\;d\sigma- \int_{t}^{t+T_0}\mu(\tau(\sigma))\;d\sigma=0,
\end{split}\]
where we have used again the symmetry of $C_\phi$ and \eqref{eq:pert}.

We are   left to show that $\Xi(\bar \sigma)\neq \Xi(\bar t)$ for any $0\le \bar \sigma<\bar t<2T_0$. Assume  that $\Xi(\bar \sigma)=\Xi(\bar t)$
for some $0\le \bar \sigma<\bar t\le 2T_0$.  Then we have   $0=\int_{\bar \sigma}^{\bar t}\dot\Xi(t)\;dt= \int_{\bar \sigma}^{\bar{t}}\mu(\tau(t))\;dt$. Now, letting $v:=\mu(\tau(\bar \sigma))$, by the symmetry of $C_\phi$  the function 
\[
\sigma\mapsto \int_{\bar \sigma}^\sigma\langle\mu(\tau(t),v)\rangle\;dt
\]
is monotone increasing for $\sigma\in[\bar \sigma,\bar \sigma+T_0]$ and decreasing for $\sigma\in[\bar \sigma+T_0,\bar \sigma+2T_0]$. Hence, the equation $\int_{\bar \sigma}^{\bar{t}}\mu(\tau(t))\;dt=0$ implies $\bar \sigma=0$ and $\bar t= 2T_0$.
\end{proof}

\subsection{Characteristic set of isoperimetric sets}
In this section we apply the previous results to the study of the characteristic set of $\phi$-isoperimetric sets.
As a Corollary of Theorem~\ref{thm:ccurves} we have the following 
\begin{corollary}\label{l:char1old} Let $\phi^*$ be of class $\C^2$ 
and let $E\subset \H^1$ be a $\phi$-isoperimetric set of class $\C^2$.
Then $\cC(E)$ consists of isolated points.
Moreover, for every $p_0=(\xi_0,z_0)\in \cC(E)$ and every $f$ such that $p_0\in \mathrm{gr}(f)\subset \partial E$, 
we have $\operatorname{rank}(JF(\xi_0))=2$.
\end{corollary}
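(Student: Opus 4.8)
The plan is to exhibit $\partial E$ as a complete, oriented, $\phi$-critical surface of nonvanishing $\phi$-curvature and then to read off the conclusion from Lemma~\ref{l:char1} and Theorem~\ref{thm:ccurves}. First I would normalize the set: by Remark~\ref{rem:bdd} the isoperimetric set $E$ is equivalent to a bounded connected one, so that $\Sigma:=\partial E$ is a compact---hence complete---surface of class $\C^2$, and, being the topological boundary of a region of $\R^3$, it is orientable. The first-variation identity of Proposition~\ref{pix} (together with its $x$- and $y$-graph analogues from Remark~\ref{rem:x-graphs}) then shows that $\Sigma$ is $\phi$-critical in the sense of Section~\ref{s:char}, with constant $\phi$-curvature $h=-\tfrac{3}{4}\,\P_\phi(E)/\mathcal L^3(E)$. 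Since $0<\mathcal L^3(E)<\infty$ and, by \eqref{eq:iisop} together with $C_I>0$, we have $\P_\phi(E)\ge C_I\,\mathcal L^3(E)^{3/4}>0$, it follows that $h\neq 0$.

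Second, I would apply Lemma~\ref{l:char1} to $\Sigma$: since $\Sigma$ has constant $\phi$-curvature, $\cC(E)=\cC(\Sigma)$ consists of isolated points together with $\C^1$ curves, and at every isolated point $p_0=(\xi_0,z_0)$ one has $\operatorname{rank}(JF(\xi_0))=2$ for every local graph function $f$ with $p_0\in\mathrm{gr}(f)\subset\partial E$. This immediately yields the ``moreover'' part of the statement, so it only remains to exclude the existence of nontrivial characteristic curves.

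Third, I would rule out characteristic curves by compactness. Because $\Sigma$ is $\phi$-critical with $h\neq 0$, Theorem~\ref{thm:ccurves} applies and tells us that any characteristic curve $\Gamma=(\Xi,\zeta)\subset\cC(\Sigma)$ is either a horizontal line or the horizontal lift of a simple closed curve. A horizontal line is an unbounded subset of $\H^1$ and therefore cannot lie in the compact surface $\Sigma$. In the other case $\Xi$ is a simple closed curve, of period $2T_0$ say; integrating the horizontality relation $\dot\zeta=\omega(\Xi,\dot\Xi)$ over one period gives the increment $\zeta(t+2T_0)-\zeta(t)=\int_t^{t+2T_0}\omega(\Xi,\dot\Xi)\,d\sigma$, which equals the signed area enclosed by the simple closed curve $\Xi$ and is hence nonzero. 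Thus $\zeta$ is unbounded, $\Gamma$ leaves every compact set, and again $\Gamma\not\subset\Sigma$, a contradiction. Therefore $\cC(E)$ contains no nontrivial curve and is made of isolated points.

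The technical heart of the argument lies entirely in the quoted Lemma~\ref{l:char1} and Theorem~\ref{thm:ccurves}; the only points requiring care at this stage are the passage from the graph-wise first variation to the single global notion of a $\phi$-critical surface of constant curvature, and the observation---used to discard the closed-curve case---that the horizontal lift of a planar loop of nonzero enclosed area is never bounded. I expect the latter, namely the clean use of compactness of $\partial E$ to eliminate both curve types, to be the main (though modest) obstacle.
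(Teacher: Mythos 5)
Your proposal is correct and follows essentially the same route as the paper: the paper also deduces the corollary from Lemma~\ref{l:char1} and Theorem~\ref{thm:ccurves}, invoking the boundedness of $\partial E$ from Remark~\ref{rem:bdd} to exclude both horizontal lines and (unbounded) horizontal lifts of simple closed curves. Your additional justification that the lift of a simple closed loop is unbounded because the $z$-increment per period equals the nonzero enclosed signed area is exactly the detail the paper leaves implicit.
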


\begin{proof} By Remark~\ref{rem:bdd}, we know that $\partial E$ is bounded. Therefore we exclude the possibility that $\cC(\partial E)$ contains complete (unbounded) lifts of simple curves. 
\end{proof}

\begin{lemma}\label{l:ch2} Let $\phi^*$ be of class $\C^2$ and $E\subset \H^1$ be a $\phi$-isoperimetric set of class $\C^2$.
Let $p_0\in \cC(E)$. There exists $r>0$ such that for  $p\in \partial E \cap B(p_0,r) $, $p\neq p_0$, the maximal horizontal lift of the $\phi$-circle in $\partial E$
through $p$ meets $p_0$.
\end{lemma}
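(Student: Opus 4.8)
The plan is to pass to the planar slice and prove that near the isolated characteristic point $\xi_0$ the leaf field $F^\perp$ (whose integral curves are the Legendre curves, see \eqref{eq:legendre}) has a \emph{source} at $\xi_0$. Every leaf will then emanate from $\xi_0$; since by Corollary~\ref{l:conto} each leaf lies on a $\phi$-circle of radius $1/|h|$, and a closed curve whose closure contains $\xi_0$ must contain $\xi_0$, each such circle will pass through $\xi_0$, and lifting will give the statement. To set up, I would write $p_0=(\xi_0,z_0)$ and use Corollary~\ref{l:char1old} to get that $\xi_0$ is isolated in $\cC(f)$ with $\operatorname{rank}JF(\xi_0)=2$, where $f\in\C^2$ is a graph function for $\partial E$ near $p_0$ and $F$ is as in \eqref{effe}; by Proposition~\ref{pix} the graph has constant $\phi$-curvature $h\ne0$, say $h>0$. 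I would fix $r$ so small that $\partial E\cap B(p_0,r)$ lies in this chart, so that every $p=(\xi,f(\xi))$ in it with $p\ne p_0$ has $\xi\ne\xi_0$ and lies on a unique Legendre curve.

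The heart of the argument is to show $\nabla^2 f(\xi_0)=0$. I would start from the curvature identity $H_\phi=\operatorname{div}(\X)=\operatorname{tr}(\mathcal H\n(F)\,JF)=h$ on $D\setminus\{\xi_0\}$, cf.\ \eqref{eq:curvz}. As $\nabla\n$ is positively $0$-homogeneous, $\mathcal H\n$ is $(-1)$-homogeneous, whence $\mathcal H\n(F)=|F|^{-1}\mathcal H\n(F/|F|)$; letting $\xi\to\xi_0$ along a direction $v$, using $F(\xi)=JF(\xi_0)(\xi-\xi_0)+o(|\xi-\xi_0|)$ and $F(\xi_0)=0$, the bounded right-hand side forces $\operatorname{tr}(\mathcal H\n(w)\,JF(\xi_0))=0$ for $w$ the direction of $JF(\xi_0)v$, hence---$JF(\xi_0)$ being invertible---for all $w\in\mathbb S^1$. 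Since $\mathcal H\n(w)$ is symmetric with $\mathcal H\n(w)w=0$ by Euler's identity, it equals $\kappa(w)\,\tau(w)\tau(w)^\top$ with $\tau(w)\perp w$ and $\kappa(w)\ge0$; writing $JF(\xi_0)=\nabla^2 f(\xi_0)-\tfrac12\mathbb J$, where $\mathbb J\xi=\xi^\perp$ is skew and so drops out of the trace against a symmetric matrix, the condition reduces to $\kappa(w)\langle\tau(w),\nabla^2 f(\xi_0)\tau(w)\rangle=0$. As $\n\in\C^2$ is strictly convex, $\kappa>0$ on a dense set of directions, so $\langle u,\nabla^2 f(\xi_0)u\rangle=0$ there, and by continuity for all $u$; thus $\nabla^2 f(\xi_0)=0$ and $JF(\xi_0)=-\tfrac12\mathbb J$.

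With this in hand, $F^\perp=\mathbb J F$ has $D(F^\perp)(\xi_0)=\mathbb J\,JF(\xi_0)=\tfrac12\,\mathrm{Id}$, so $\xi_0$ is a source. Concretely, for $\rho(\xi)=|\xi-\xi_0|^2$ I would compute, along a Legendre curve, $\dot\rho=2\langle\xi-\xi_0,F^\perp(\xi)\rangle=\rho+o(\rho)>0$ near $\xi_0$. After shrinking $r$, this shows that every integral curve of $F^\perp$ meeting $B(\xi_0,r)\setminus\{\xi_0\}$ stays in the ball and converges to $\xi_0$ in backward time, so $\xi_0$ lies in its closure. By Corollary~\ref{l:conto} the curve is contained in a $\phi$-circle of radius $1/h$; this circle is closed, hence $\xi_0$ belongs to it.

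To conclude, for $p=(\xi,f(\xi))$ as above the $\phi$-circle in $\partial E$ through $p$ projects to the $\phi$-circle through $\xi$, which by the previous step passes through $\xi_0$; its maximal horizontal lift is exactly the Legendre curve through $p$, and since its projection tends to $\xi_0$ in backward time while $\partial E$ is the graph of $f$ there, the lift tends to $(\xi_0,f(\xi_0))=p_0$ and so meets $p_0$. I expect the only genuine obstacle to be the second step---the vanishing of $\nabla^2 f(\xi_0)$, i.e.\ the source structure of $F^\perp$ at $\xi_0$; once that degeneracy is established the rest follows directly from Corollaries~\ref{l:char1old} and \ref{l:conto}.
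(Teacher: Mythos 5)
Your proof is correct, but it follows a genuinely different route from the paper's. The paper argues by contradiction and compactness: if there were points $\xi_n\to\xi_0$ whose $\phi$-circles missed $\xi_0$, one extracts a limit $\phi$-circle $\Theta$ through $\xi_0$ along which the normal $\mathcal N=\nabla\phi^*(F)$ extends continuously; but since $\operatorname{rank}JF(\xi_0)=2$ and $\nabla\phi^*$ is odd, the two one-sided limits of $\nabla\phi^*(F(\xi_0+tb))$ as $t\to0^{\pm}$ are opposite nonzero vectors, a contradiction. You instead prove the stronger local statement $\mathcal{H}f(\xi_0)=0$ by exploiting the $(-1)$-homogeneity of $\mathcal H\phi^*$ against the boundedness of $H_\phi=\operatorname{tr}(\mathcal H\phi^*(F)\,JF)$, so that $JF(\xi_0)=-\tfrac12\mathbb J$ and $F^\perp$ has linearization $\tfrac12\,\mathrm{Id}$ at $\xi_0$; the Gronwall estimate $\dot\rho=\rho+o(\rho)$ then forces every Legendre leaf to limit onto $\xi_0$ in backward time, and closedness of the $\phi$-circle finishes the argument. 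Your version buys more information -- it identifies the second-order flatness of $f$ at the characteristic point (anticipating Step 2 of the proof of Theorem~\ref{iFlynt}) and gives the source structure with an explicit rate -- at the cost of one extra hypothesis: the step ``$\kappa(w)>0$ on a dense set of directions'' does \emph{not} follow from $\phi^*$ being of class $\C^2$ (a $\C^2$ norm can have flat faces), but from the strict convexity of $\phi^*$, which is equivalent to the differentiability of $\phi$; this is available here only because of the standing assumption of Section~\ref{s:char} that $\phi$ is also of class $\C^2$, so you should cite that assumption rather than the regularity of $\phi^*$. The paper's argument needs neither strict convexity nor the exact value of $JF(\xi_0)$, only its invertibility and the oddness of $\nabla\phi^*$, which is why it is shorter. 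Both proofs rely on the same nontrivial input, namely Corollary~\ref{l:char1old} (isolatedness of characteristic points and $\operatorname{rank}JF(\xi_0)=2$), and both leave the final ``meets $p_0$'' step at the same level of informality (the maximal lift contains $p_0$ as the limit of $(\xi(t),f(\xi(t)))$ along the leaf).
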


\begin{proof}
The surface $\partial E\cap B(p_0,r)$ is the $z$-graph
of $f\in \C^2(D)$ and $p_0 = (\xi_0, f(\xi_0)) $ with $\cC(f) \cap  \{|\xi-\xi_0|<r\} = \{\xi_0\}$.
Let $\Theta_\xi\subset D
$ be the maximal $\phi$-circle (integral curve of $F^\perp$) passing though $\xi\in D\setminus\{\xi_0\}$. Notice that  the radius of $\Theta_\xi$
does not depend on $\xi$, as it follows from
 Corollary~\ref{l:conto}.
If $\xi_0\notin \Theta_\xi$, then
the normal  vector $\mathcal N_\xi =\nabla \phi^*(F)$ is continuously defined on $\Theta_\xi$.

Assume that there exists a sequence of such $\xi$ with $\xi\to \xi_0$. By an elementary compactness argument it follows that
there exists a $\phi$-circle $\Theta$ passing through $\xi_0$ and there exists a normal $\mathcal N$ that is continuously defined along $\Theta$ and, in particular, through $\xi_0$. Outside $\xi_0$ we have $\mathcal N = \nabla \phi^*(F)$.

Let $b\in\R^2$ the unit   vector tangent to $\Theta$ at $\xi_0$.
Then we have
\[
F(\xi_0+ tb) = F(\xi_0) + t J F(\xi_0) b+ o(t) = t J F(\xi_0) b+ o(t),
\]
 with $JF(\xi_0) b \neq 0$, because $JF(\xi_0)$ has rank $2$ by Lemma~\ref{l:char1}. Since $\nabla \phi (-v) = - \nabla \phi(v)$, for $v\in\R^2\setminus\{0\}$, it follows that
 \[
\begin{split}
 &  \lim _{t\to 0^+} \nabla \phi^*(F(\xi_0+ tb)) = \nabla \phi^*(J F(\xi_0) b),
 \\
 & \lim _{t\to 0^-} \nabla \phi^*(F(\xi_0+ tb)) = -\nabla \phi^*(J F(\xi_0) b).
 \end{split}
 \]
 This contradicts the continuity of $\mathcal N$ along $\Theta$ at $\xi_0$.
\end{proof}

 \section{Classification of $\phi$-isoperimetric sets of class $\C^2$}\label{s:proof}

\subsection{Construction of $\phi$-bubbles}

Let $\phi$ be a norm in $\R^2$ that we normalize by  $\phi(1,0)=1$.
For $\xi_0\in \R^2$ and $r>0$, $\phi$-circles are defined in \eqref{eq:qCircle} and we let the \emph{$\phi$-disk} of radius $r$ and center $\xi_0$ be 
\[
        D_\phi(\xi_0,r)  = \{\xi  \in \R^2: \phi(\xi-\xi_0) <r  \}.
\]
We also let $
C_\phi(r)= C_\phi(0,r)$,
$C_\phi= C_\phi(1)$ and
$D_\phi(r)= D_\phi(0,r)$,
$D_\phi= D_\phi(1)$.

The circle $C_\phi$ is   a Lipschitz curve and
we denote by  $L=L_\phi > 0$ its
Euclidean  length.
We parametrize $C_\phi$ by arc-length through   $\kappa \in \mathrm{Lip} \big( [0,L];\R^2 \big)$ such that $\kappa([0,L]) = C_\phi$ with initial and end-point
$\kappa(0) = \kappa(L) = (-1,0)$.  We choose the  anti-clockwise orientation and we extend $\kappa$ to the whole $\R$ by $L$-periodicity.
Then we have $ \kappa \in \mathrm{Lip}  (\R;\R^2)$.

 The map
$\xi  : \R^2 \to \R^2$,
        $\xi(t,\tau) = \kappa(t) + \kappa(\tau)$,
is in   $\mathrm{Lip} (\R^2;\R^2)$.
We restrict $\xi$ to the domain
\[
D = \big\{ (t,\tau) \in \R^2: \tau \in [0,L], t \in [\tau +{L}/{2}, \tau +{3L}/{2}] \big\}.
\]
Notice that $\xi (\tau+{L}/{2}, \tau) =  \xi (\tau + {3L}/{2}, \tau) = 0$ for any $\tau\in [0,L]$.
We  define the function $z\in \mathrm{Lip}(D)$,
\begin{equation}\label{eq:Psi3}
                 z(t,\tau) =   \int _{\tau + {L}/{2}}^t
                        \omega \big (\xi(s,\tau), \xi_s(s,\tau)\big) ds
                                 .
\end{equation}
The map $\Phi:D\to\R^3$ defined by $\Phi = (\xi,z)$ is Lipschitz continuous. Moreover, $\Phi$ is $\C^k$ if $\phi$ is $\C^k$.

We define the Lipschitz surface
$        \Sigma_\phi= \Phi  (D) \subset \R^3
$ and  call $S=\Phi (\tau + {L}/{2},\tau) = 0\in \Sigma_\phi$ the south pole of $\Sigma_\phi$ and
$N=\Phi(\tau + {3L}/{2},\tau) = (0,0,z(\tau + {3L}/{2},\tau))$ the north pole.
We call  the bounded region  $E_\phi \subset \R^3$  enclosed by $\Sigma_\phi$ the $\phi$-bubble.
 $E_\phi$ is a topological ball and it is  the candidate solution to the $\phi$-isoperimetric problem. When $\phi$ is the Euclidean
 norm in the plane,  the set $E_\phi$ is the well-known Pansu's ball.

\subsection{Classification of $\phi$-isoperimetric sets of class $\C^2$}\label{s:proof2}

We are ready to prove the main theorem of the paper. 

\begin{proof}[Proof of Theorem~\ref{thmi:class}] The set $E$ is bounded and connected, by Remark \ref{rem:bdd}. We may also assume that it is open.
It follows from Corollary~\ref{l:conto} (and from the analogous result for $x$-graphs and $y$-graphs based on 
Remark~\ref{rem:x-graphs2}) that, out of  the characteristic set $\cC(E)$, the surface 
$\partial E$ is foliated by horizontal lifts of $\phi$-circles. 
Then $\cC(E)$ contains  at least one point, since otherwise, $\partial E$ would contain an unbounded curve, contradicting the boundedness of $E$.

Let $f\in \C^2 (D)$, with $D\subset \R^2$ open,
be a maximal function such that $\mathrm{gr}(f)\subset \partial E$ and $\mathcal C(f)\neq \emptyset$.
We may assume that $0\in \mathcal C(f)$,  $f(0)=0$ and that $E$ lies above the graph of $f$ near $0$.
Around the characteristic point $0$, the function $f$ must have the structure described in Lemma \ref{l:ch2}. It follows that, up to a dilation, we have $\mathrm{gr}(f)\subset \partial E_\phi$.

The maximal domain for $f$  must be $D= D_\phi(2)$.
Otherwise, at each point $\xi\in \partial D\setminus \partial D_\phi(2)$ the space 
$T_{(\xi,f(\xi))}\partial E=T_{(\xi,f(\xi))}\partial E_\phi$ is not vertical, contradicting the maximality of $D$. 
This shows that the graph of $f$ is the `lower hemisphere' of $\partial E_\phi$.

Up to extending $f$ by continuity to $\partial D$, we have $(\xi,f(\xi))\notin\cC(E)$
for each $\xi\in\partial D$. Hence there exists a $\phi$-circle passing through $0$ whose horizontal lift stays in $\partial E$ and passes through $(\xi,f(\xi))$. The collection of all the maximal extensions of such horizontal lifts completes the upper hemisphere of $\partial E_\phi$, thus implying that $\partial E_\phi\subset \partial E$.
Moreover, since $\partial E$ is $\C^2$, we deduce that $\partial E_\phi$ is a connected component of $\partial E$.

In conclusion we have proved that $\partial E$ is the finite union of  boundaries of $\phi$-bubbles having the same curvature. By connectedness of $E$ this concludes the proof.
\end{proof}

In the next proposition, we show that $\phi$-bubbles have the same regularity as $\phi$ outside the poles.

\begin{proposition}\label{l:sm}
If  $\phi$ is strictly convex
and of class $\C^k$, for some  $k\geq 1$, then the set $\Sigma _\phi \setminus\{ S,N\}$
is an embedded surface of class $\C^k$.
\end{proposition}

\begin{proof} If the Jacobian of $\Phi$ has rank 2 at the point $(t,\tau)\in D$,
then $\Sigma_\phi$ is an embedded surface of class $\C^k$ around the point $\Phi(t,\tau)$.
A sufficient condition for this is $
\det                 J\xi (t,\tau) \neq 0$.
The Jacobian of $\xi: D\to\R^2$ satisfies
        \begin{equation*}\label{eq:JPsi}
\det                 J\xi (t,\tau) = 0\quad \textrm{if and only if}\quad
        \dot{\kappa}(t) = \pm \dot{\kappa}(\tau).
        \end{equation*}
The case $\dot{\kappa}(t) = -\dot{\kappa}(\tau)$ is equivalent to
${\kappa}(t) = -{\kappa}(\tau)$, by the strict convexity of the norm.
This is in turn equivalent to $t=\tau+L/2$ or $t = \tau+3L/2$.
In the former case we have $\Phi(t,\tau) = S$, in the latter  $\Phi(t,\tau) = N$.

We are left to consider the case
$\dot{\kappa}(t) = \dot{\kappa}(\tau)$.
By strict convexity of $\phi$, this implies  ${\kappa}(t) = {\kappa}(\tau)$, that is equivalent to $t =\tau + L$.
In this case, we have
$\xi(t,\tau) = 2 \kappa(\tau) \in  C_\phi(2)$
The point $\Phi(t,\tau)$ is on the `equator' of $\Sigma_\phi$.

        We study the regularity of $\Sigma_\phi$ at points
        $\Phi(\tau+L,\tau)$.
        The height   $z (\tau+L,\tau)$ does not depend on $\tau$ because it  is half the area of the disk   $D_\phi$. 
        It follows that $
                0 =  \partial_\tau \big (  z  (\tau+L,\tau) \big)  =
                z_t (\tau+L,\tau) +
                z_\tau (\tau+L,\tau)
        $
        and this implies that
        \begin{equation}\label{eq:dPsi3t}
        z_t  (\tau+L,\tau) \neq
        z_\tau (\tau+L,\tau),
        \end{equation}
as soon as we prove that the left-hand side does not vanish.
Indeed, differentiating  \eqref{eq:Psi3} we
        obtain
\begin{equation*}         z_t (\tau+L ,\tau) =
        2 \omega\big(  \kappa(\tau),\dot\kappa(\tau)   \big) \neq 0,
\end{equation*}
because
 $\kappa(\tau)$ and $\dot{\kappa}(\tau)$ are not  proportional.

From
$\dot{\kappa}(\tau+L) = \dot{\kappa}(\tau) \neq 0$
and \eqref{eq:dPsi3t}, we deduce that the Jacobian matrix $ J\Phi (\tau+L,\tau)$ has rank 2. This shows that $\Sigma_\phi$ is of class $\C^k$
        also around the `equator'.
\end{proof}

In general, $\phi$-bubbles are not of class $\C^2$ and not even of class $\C^1$, e.g., in the case of a crystalline norm. Even when $\phi$ is regular,
there may be a loss
of regularity at the poles of $E_\phi$. 

In \cite[Theorem~5.7]{PozueloRitore}, Pozuelo and Ritor\'e show that $\partial E_\phi$ is an embedded surface of class $\C^2$ under the assumption that $\phi$ is of class $\C^2_+$. Here, we say that a norm $\phi$ in $\R^2$ is of class $\C^r_+$ for $r\in\mathbb N\cup \{\infty\}$, $r\geq 2$, if $\phi \in \C^r(\R^2\setminus\{0\})$ and $\phi$-circles have strictly positive curvature.
In the next proposition we show that in fact the assumption in \cite[Theorem~5.7]{PozueloRitore} is equivalent to the fact that $\phi$  and $\phi^*$ are $\C^2$.
\begin{proposition}\label{prop:c2+} 
Let $\phi$ be a norm in $\R^2$ with dual $\phi^*$, and let $r\in\N\cup\{+\infty\}$, $r\ge 2$. Then the following are equivalent
\begin{itemize}
\item[(i)] $\phi$ and $\phi^*$ are of class $\C^r$;
\item[(ii)] $\phi$ is of class $\C^r_+$.
\end{itemize}
\end{proposition}

\begin{proof}
We start by observing that, following \cite[Section~2.5]{Schneider93}, $\phi$ is of class $\C^r_+$ if and only if $\phi$ is of class $\C^r$ and the map $\nu_\phi:C_\phi \to\mathbb S^1$, $\nu_\phi(\xi)={\nabla\phi(\xi)}/{|\nabla\phi(\xi)|}$,
is a $\C^{r-1}$ diffeomorphism. 
The fact that (ii) implies (i) then follows from \cite[Cor.~5.3]{PozueloRitore}. 

We show that (i) implies (ii). 
Consider the $\C^r$-diffeomorphism $S_{\phi^*}:C_{\phi^*}\to \mathbb S^1$, $S_{\phi^*}(\xi)=\xi/|\xi|$, with inverse $S_{\phi^*}^{-1}(u)=u/\phi^*(u)$, $u\in\mathbb S^1$, and let $\cN_{\phi}:C_\phi\to C_{\phi^*}$, $\cN_\phi(\xi)=\nabla \phi(\xi)$. Here we are using the fact  that for any $\xi\in\R^2\setminus \{0\}$ we have $\nabla\phi^*(\xi)\in C_\phi$, as follows \emph{e.g.,} by~\cite[Theorem~1.7.4]{Schneider93}.
Then we have that 
\begin{equation}\label{eq:nu}
\nu_\phi=S_{\phi^*}\circ \cN_\phi,
\end{equation}
and the statement follows once we prove that $\cN_\phi$ is a diffeomorphism of class $\C^{r-1}$.
To this purpose, observe that the maps $\cN_{\phi^*}$ and $\cN_\phi$ are both of class $\C^{r-1}$ by (ii). We are then left to prove that $\cN_{\phi^*}$ and $\cN_\phi$ are inverse one of the other. In fact, since $\phi^*$ is $\C^r$, $r\ge 2$, we have that $\phi$ is strictly convex (arguing e.g.\ as in~\cite[Theorem~26.3]{Rockafellar}). Hence, by~\cite[Corollary~1.7.3]{Schneider93}, $\nu_\phi$ is invertible and satisfies 
\[
\nu_\phi^{-1}\left(\frac{\eta}{|\eta|}\right)=\cN_{\phi^*}(\eta),\quad \eta\in\R^2\setminus\{0\}.
\]
Using this relation together with \eqref{eq:nu}, we thus obtain for any $\eta\in C_{\phi^*}$:
\[
\cN_\phi(\cN_{\phi^*}(\eta))=\cN_\phi(\cN_\phi^{-1}\circ S_{\phi^*}^{-1}\left({\eta}/{|\eta|}\right))=\eta.
\]
By duality we also have that $\cN_{\phi^*}(\cN_\phi(\xi))=\xi$ for any $\xi\in C_\phi$, thus concluding the proof. 
\end{proof}

\section{The isoperimetric problem for general norms}\label{s:cr}

In the case of crystalline norms, the first order necessary conditions satisfied by an isoperimetric set
are not sufficient to reconstruct its structure, even assuming sufficient regularity.
In this section we show that   the $\phi$-isoperimetric problem for a general norm -- in particular for a crystalline norm -- can be approximated by the isoperimetric problem for  smooth norms.

By \cite[Theorem~5.7]{PozueloRitore} (and Proposition~\ref{prop:c2+}), we know that if $\phi$ and $\phi^*$ are of class $\C^\infty$, then the $\phi$-bubble $E_\phi$ is of class $\C^2$.
In this section, we show that
the validity of Conjecture~\ref{iBenbow}
implies the
$\phi$-isoperimetric property for 
 the $\phi$-bubble of any (crystalline) norm. 

\subsection{Smooth approximation of norms in the plane}

We start with  the mollification of a norm.

\begin{proposition}\label{p:approx} Let $\phi$ be a norm in $\R^2$.
Then, for any $\varepsilon>0$ there exists a  norm $\phi_\varepsilon$ of class $\C^\infty_+$, 
such that for all $\xi\in\R^2$ we have
\begin{equation} \label{piff}
                (1-\eta( \epsilon) ) \phi_\epsilon(\xi)\leq \phi(\xi)\leq
                (1+\eta( \epsilon)) \phi_\epsilon(\xi),
\end{equation}
and $\eta( \epsilon) \to0$ as $\epsilon\to0^+$.
\end{proposition}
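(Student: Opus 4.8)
The plan is to smooth $\phi$ by mollification on the unit circle, exploiting the fact that every norm is the support function of a planar convex body and that the support-function characterization of convexity is preserved under angular convolution. Writing $u(\alpha)=\phi(\cos\alpha,\sin\alpha)$ for the angular profile of $\phi$ on $\mathbb{S}^1\cong\R/2\pi\mathbb{Z}$, I would first recall the classical description: a $1$-homogeneous function is the support function of a centrally symmetric convex body if and only if $u''+u\ge 0$ in the distributional sense. Since $\phi$ is a norm it equals $\phi^{**}$, the support function of the dual unit ball, so this inequality holds. Let $\rho_\varepsilon$ be a nonnegative, even, smooth mollifier on the circle with $\int\rho_\varepsilon=1$ concentrated in $(-\varepsilon,\varepsilon)$, set $u_\varepsilon=u*\rho_\varepsilon$, and define $\phi_\varepsilon$ to be the $1$-homogeneous extension of $u_\varepsilon+\varepsilon$. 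Because convolution on the circle commutes with $\tfrac{d^2}{d\alpha^2}$ and $\rho_\varepsilon\ge0$, one gets $(u_\varepsilon+\varepsilon)''+(u_\varepsilon+\varepsilon)=(u''+u)*\rho_\varepsilon+\varepsilon\ge\varepsilon>0$, so $\phi_\varepsilon$ is again a support function, of a convex body $K_\varepsilon$ whose boundary radius of curvature is bounded below by $\varepsilon$. Evenness of $u$ and of $\rho_\varepsilon$ guarantees that $\phi_\varepsilon$ is symmetric, hence a genuine norm.

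Next I would deduce the regularity. Since $u_\varepsilon+\varepsilon\in\C^\infty$ and $(u_\varepsilon+\varepsilon)''+(u_\varepsilon+\varepsilon)\ge\varepsilon$, the support parametrization $\alpha\mapsto p(\alpha)$ of $\partial K_\varepsilon$ is a $\C^\infty$ immersion with strictly positive curvature, so $\partial K_\varepsilon$ is an embedded $\C^\infty$ curve of positive curvature. Consequently the gauge $\phi_\varepsilon^*=\|\cdot\|_{K_\varepsilon}$, whose unit circle is $\partial K_\varepsilon$, is of class $\C^\infty_+$; and by the self-duality of this class (a norm is $\C^\infty_+$ if and only if its dual norm is $\C^\infty_+$, via polar duality of the unit balls) the norm $\phi_\varepsilon=\phi_\varepsilon^{**}$ is of class $\C^\infty_+$ as well. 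In particular $\phi_\varepsilon^*$ is of class $\C^\infty$, as required by the statement.

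Finally, I would establish the multiplicative estimate \eqref{piff} by quantitative control of the perturbation. Norm equivalence gives $0<c\le u\le C$ on $\mathbb{S}^1$, and uniform continuity of $u$ gives $|u_\varepsilon-u|\le\omega(\varepsilon)$ with $\omega(\varepsilon)\to0$; together with the added constant $\varepsilon$ this yields, by $1$-homogeneity and $\phi(\xi)\ge c|\xi|$, the bound $|\phi_\varepsilon(\xi)-\phi(\xi)|\le(\omega(\varepsilon)+\varepsilon)|\xi|\le \tfrac{\omega(\varepsilon)+\varepsilon}{c}\,\phi(\xi)$. Setting $\eta(\varepsilon):=(\omega(\varepsilon)+\varepsilon)/c\to0$ gives $(1-\eta(\varepsilon))\phi\le\phi_\varepsilon\le(1+\eta(\varepsilon))\phi$, which rearranges into \eqref{piff} after replacing $\eta$ by $\eta/(1-\eta)$.

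The main obstacle is the convexity and curvature bookkeeping of the middle step. One must verify carefully that angular mollification preserves the support-function inequality $u''+u\ge 0$, that the added disk term forces a uniform positive lower bound on the radius of curvature (hence $\C^\infty$ boundary with positive curvature for $K_\varepsilon$), and then translate this positive curvature through polar duality into the $\C^\infty_+$ property of both $\phi_\varepsilon$ and $\phi_\varepsilon^*$. The approximation estimate itself is routine once norms are observed to be bounded below on $\mathbb{S}^1$.
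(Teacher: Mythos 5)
Your argument is correct, and while the mollification step is in substance identical to the paper's (your $u_\varepsilon=u*\rho_\varepsilon$ is exactly the restriction to $\mathbb S^1$ of the paper's rotational average $\psi_\varepsilon(\xi)=\int\varrho_\varepsilon(t)\phi(R_t\xi)\,dt$, whose $1$-homogeneous extension is automatically convex as an average of norms, so the support-function inequality $u''+u\ge0$ is not strictly needed for that part), the strict-convexification step is genuinely different. The paper sets $\phi_\varepsilon=\sqrt{\psi_\varepsilon^2+\varepsilon|\cdot|^2}$ and reads off the positive curvature of the \emph{primal} unit circle from the Hessian bound $\mathcal H F_\varepsilon\ge 2\varepsilon I_2$ for $F_\varepsilon=\psi_\varepsilon^2+\varepsilon|\cdot|^2-1$; you instead set $\phi_\varepsilon=\psi_\varepsilon+\varepsilon|\cdot|$ (the $1$-homogeneous extension of $u_\varepsilon+\varepsilon$), view it as the support function of $K_\varepsilon=B_{\psi_\varepsilon^*}+\varepsilon B$, and read off the positive curvature of the \emph{dual} unit circle from the radius-of-curvature formula $h''+h\ge\varepsilon$. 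Each route then needs one standard polar-duality transfer: the paper passes from $\phi_\varepsilon\in\C^\infty_+$ to $\phi_\varepsilon^*\in\C^\infty$, while you pass from $\phi_\varepsilon^*\in\C^\infty_+$ to $\phi_\varepsilon\in\C^\infty_+$; both facts are classical and both proofs leave them unproved, so neither has an advantage there. Your version buys a cleaner quantitative handle on the curvature (an explicit lower bound $\varepsilon$ on the radius of curvature of $\partial K_\varepsilon$) and avoids squaring the norm, at the cost of invoking the distributional characterization of support functions; the paper's version stays entirely on the primal side and needs only elementary convexity of averages. Two minor imprecisions worth fixing in your write-up: what guarantees the symmetry of $\phi_\varepsilon$ is the $\pi$-periodicity of $u$ (i.e.\ $u(\alpha+\pi)=u(\alpha)$, preserved by convolution), not evenness of $u$ and $\rho_\varepsilon$; and the lower bound $h''+h\ge\varepsilon$ gives curvature \emph{at most} $1/\varepsilon$, the strict positivity of the curvature coming instead from the upper bound on $h''+h$ supplied by smoothness of $u_\varepsilon$.
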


\begin{proof}
For $\varepsilon>0$, we introduce the smooth mollifiers $\varrho_\varepsilon:\R\to \R$,  supported in $[-\varepsilon\pi,\varepsilon\pi]$ defined by
\[
\varrho_\varepsilon(t)=
\begin{cases}
c_\varepsilon \exp\Big(  {\frac{\pi^2\epsilon^2}{t^2-\pi^2\epsilon^2}}\Big) &\text{if }|t|<\pi,\\
0&\text{if }|t|\geq \pi,
\end{cases}
\]
where $c_\varepsilon$ is chosen in such a way that $\int_\R\varrho_\varepsilon(t)\;dt=1$.
Following \cite{DWM99,MBC06}, we define the function $\psi_\varepsilon:\R^2\to[0,\infty)$ letting
\[
 \psi_\varepsilon(\xi):=\int_{\R}\varrho_\varepsilon(t)\phi(R_t \xi)\;dt,
\]
where $R_t$ denotes the anti-clockwise rotation matrix of angle $t$.  The function $\psi_\varepsilon$ is a $\C^\infty$ norm.
On the circle $\mathbb S^1=\{\xi\in\R^2:|\xi|=1\}$, the norms $\psi_\epsilon$ converge uniformly to $\phi$ as $\epsilon\to0^+$. So our claim \eqref{piff} 
{with $\eta(\epsilon)\to 0$} holds with  $\psi_\epsilon$ replacing $\phi_\epsilon$, by the positive $1$-homogeneity of norms.

We let $\phi_\varepsilon:\R^2\to [0,\infty)$ be defined by
\[
\phi_\varepsilon(\xi):=\sqrt{\psi_\varepsilon(\xi)^2+\varepsilon|\xi|^2},\quad \xi\in\R^2.
\]
This is a $\C^\infty$ norm in $\R^2$ and 
\eqref{piff} is satisfied with $\eta(\epsilon)\to 0$.
The {unit} $\phi_\epsilon$-circle {centered at the origin} is the $0$-level set of the function
\[
F_\epsilon (\xi)=\psi_\varepsilon^2(\xi)+\varepsilon|\xi|^2-1, \quad \xi\in\R^2.
\]
Since the Hessian matrix of the squared Euclidean norm is proportional to the identity matrix $I_2$ and $\psi_\varepsilon^2$ is convex, we have that $\mathcal H\! F_\epsilon   \geq 2\varepsilon I_2$ in the sense of matrices. Then the curvature $\lambda_\epsilon$ of a {unit} $\phi_\epsilon$-circle satisfies %, at any point,
\begin{equation*}
\label{eq:kurv}\begin{split}
\lambda_\epsilon  &=\frac{\langle \mathcal H F_\epsilon \nabla F_\epsilon ^\perp,\nabla F_\epsilon  ^\perp\rangle}{|\nabla F_\epsilon |^3}\geq \frac{ 2\varepsilon}{|\nabla F_\epsilon |}>0.
\qedhere
\end{split}
\end{equation*}
\end{proof}

\subsection{Crystalline $\phi$-bubbles as limits  of smooth isoperimetric sets} Let $\phi$ be any norm in $\R^2$ and  let $\{\phi_\varepsilon\}_{\varepsilon>0}$ be the smooth approximating norms found in Proposition~\ref{p:approx}.

Given a Lebesgue measurable set $F\subset \R^2$,
from \eqref{piff}
and from the definition of perimeter (Definition~\ref{d:per}), we have
\begin{equation}
\label{Jim}
(1-\eta(\epsilon) )
\P_{\phi}(F)  \leq
\P_{\phi_\epsilon}(F) \leq (1+\eta(\epsilon) )
\P_{\phi}(F)  .
\end{equation}

 The $\phi_\epsilon$-circles $C_{\phi_\epsilon}$ converge in Hausdorff distance to the circle $C_\phi$.
This implies that the $\phi_\epsilon$-bubbles $E_{\phi_\epsilon}$ converge in the Hausdorff distance to the limit bubble   $E_\phi$.
This in turn implies the convergence in $L^1(\mathbb H^1)$, namely,
\begin{equation}\label{tesoro}
\lim_{\epsilon \to0^+} \mathcal L^3( E_{\phi_\epsilon}\Delta E_\phi)=0,
\end{equation}
where $\Delta$ denotes the symmetric difference of sets.

\begin{proof}[Proof of Theorem~\ref{thmi:appr}]
Let $F\subset \mathbb H^1 $ be any Lebesgue measurable set with $0<\mathcal L^3(F) <\infty$.
Assuming the validity of Conjecture \ref{iBenbow},
$E_{\phi_\epsilon}$ is isoperimetric for any $\epsilon>0$.
So using twice   \eqref{Jim} 
we find 
\[
\operatorname{Isop_{\phi}}(F)
\geq  \frac{
\operatorname{Isop_{\phi_\epsilon}}(F)}{  1+\eta(\epsilon)}
\geq  \frac{\operatorname{Isop_{\phi_\epsilon}}(E_{\phi_\epsilon})
}{  1+\eta(\epsilon)}
\geq  \frac{1-\eta(\epsilon) }{  1+\eta(\epsilon)}
\operatorname{Isop_{\phi}}(E_{\phi_\epsilon}).
\]
By the lower semicontinuity of the perimeter with respect to the $L^1$ convergence and from \eqref{tesoro}, we deduce that
\[
\liminf_{\epsilon\to0^+}  \operatorname{Isop_{\phi }}(E_{\phi_\epsilon})\geq
\operatorname{Isop_{\phi}}(E_{\phi }),
\]
and using the fact that $\eta(\epsilon)\to 0$ we conclude that
$
\operatorname{Isop_{\phi}}(F)
\geq  \operatorname{Isop_{\phi}}(E_\phi).
$
\end{proof}

\end{document}